\definecolor{mypurple}{rgb}{.4,.0,.5}
\def\y{{\bf y}}
\def\x{{\bf x}}
\def\x{{\mathbf x}}
\def\u{{\bf u}}
\def\x{{\bf x}}
\def\y{{\bf y}}
\def\q{{\bf q}}
\def\m{{\bf m}}
\def\h{{\bf h}}
\def\be{\begin{equation}}
\def\ee{\end{equation}}
\def\ba{\left[\begin{array}}
\def\ea{\end{array}\right]}
\def\u{{\bf u}}
\def\x{{\bf x}}
\def\y{{\bf y}}
\def\q{{\bf q}}
\def\p{{\bf p}}
\def\1{{\bf 1}}
\def\0{{\bf 0}}
\def\calX{{\mathcal X}}
\def\barcalX{ \bar{ {\mathcal X} } }
\def\calY{{\mathcal Y}}
\def\mR{{\mathbb R}}
\def\mN{{\mathbb N}}
\def\mE{{\mathbb E}}
\def\lp{\left (}
\def\rp{\right )}
\newtheorem{theorem}{Theorem}
\newtheorem{proposition}{Proposition}
\newtheorem{corollary}{Corollary}
\begin{document}

\begin{singlespace}

\title {A large deviation view of \emph{stationarized} fully lifted blirp interpolation 
}
\author{
\textsc{Mihailo Stojnic
\footnote{e-mail: {\tt flatoyer@gmail.com}} }}
\date{}
\maketitle

\centerline{{\bf Abstract}} \vspace*{0.1in}

We consider \emph{bilinearly indexed random processes} (blirp) and study their interpolating comparative mechanisms.   Generic introduction of the \emph{fully lifted} (fl) blirp interpolation in \cite{Stojnicnflgscompyx23} was followed by a corresponding stationarization counterpart  in \cite{Stojnicsflgscompyx23}.  A  \emph{large deviation} upgrade of  \cite{Stojnicnflgscompyx23} introduced in companion paper \cite{Stojnicnflldp25} is complemented here with the corresponding one of \cite{Stojnicsflgscompyx23}. Similarly to \cite{Stojnicnflldp25}, the mechanism that we introduce extends the range of \cite{Stojnicsflgscompyx23}'s applicability so that it encompasses  random structures \emph{atypical} features. Among others these include  the \emph{local entropies} (LE)  which explain atypical solutions clusterings in hard random optimization problems believed to be directly responsible for the presumable existence of the so-called \emph{computational gaps}.  Moreover (and similar to \cite{Stojnicnflgscompyx23}), despite on occasion somewhat involved  technical considerations, the final forms of the uncovered fundamental interpolating parameters relations are rather elegant and as such provide a valuable tool readily available for further use.

\vspace*{0.25in} \noindent {\bf Index Terms: Random processes; comparison, lifting, stationarization, large deviations}.

\end{singlespace}

\section{Introduction}
\label{sec:back}

Comparative mechanisms associated with random processes have played a key role in analysis of random structures and optimization problems over the last two decades. Statistical analyses critically powered by their use provided a strong progress in explaining a host of underlying phenomena in a variety of scientific and engineering fields including statistical physics, information theory, signal and image processing, compressed sensing, AI, machine learning, and neural networks (see,  e.g., \cite{Guerra03,Tal06,Pan10,Pan10a,Pan13,Pan13a,StojnicISIT2010binary,StojnicCSetam09,StojnicUpper10,StojnicICASSP10knownsupp,StojnicICASSP10block,StojnicICASSP10var,StojnicDiscPercp13,StojnicGardGen13}).
Concrete examples include precise analysis of phase transitions phenomena, characterizations of  standard optimization metrics (objective values, optimal solutions) typical behaviors, and very accurate explanations of algorithmic accuracy, speed, and convergence. Despite demonstrated success, simple comparison forms  solid upgrades  are often needed as more complex random structures are faced. A  typical such situation occurs in the famous SK quadratic/tensorial model \cite{SheKir72} where standard Slepian/Gordon max comparison forms \cite{Slep62,Gordon85,Sudakov71,Fernique74,Fernique75,Kahane86,Stojnicgscomp16,Adler90,Lifshits85,LedTal91,Tal05} come up short of achieving \emph{exact} free energy characterizations \cite{Guerra03,Tal06,Pan10,Pan10a,Pan13,Pan13a,Parisi80,Par79,Par80,Par83}. Many different random models including quadratic, bilinear, or minmax forms face similar difficulties (see, e.g., \cite{StojnicLiftStrSec13,StojnicMoreSophHopBnds10,StojnicRicBnds13,StojnicAsymmLittBnds11,StojnicGardSphNeg13,StojnicGardSphErr13} and references therein for a host of well known examples ranging from high-dimensional geometry to signal processing, machine learning and statistical mechanics).  As observations from \cite{Guerra03,Tal06,Pan10,Pan10a,Pan13,Pan13a} regarding the quadratic/tensorial max forms and those from \cite{Stojnicgscomp16,Stojnicgscompyx16,Stojnicnflgscompyx23,Stojnicsflgscompyx23} regarding forms  from \cite{StojnicLiftStrSec13,StojnicMoreSophHopBnds10,StojnicRicBnds13,StojnicAsymmLittBnds11,StojnicGardSphNeg13,StojnicGardSphErr13} hint, any progress in understanding underlying intrinsic structures is likely to critically depend on the upgrades in associated random processes' comparisons.

An extra caveat on top of all of the above is that the results of \cite{Guerra03,Tal06,Pan10,Pan10a,Pan13,Pan13a,Stojnicgscomp16,Stojnicgscompyx16,Stojnicnflgscompyx23,Stojnicsflgscompyx23} usually relate to the so-called \emph{typical} behavior prominently exhibited in the so-called ground state regime. A need to study large deviations counterparts emerged in recent years in large part due to the role they are predicated to play in explanation of hard optimization problems \emph{computational gaps} (for more -- on where such gaps currently exist and what their overall relevance in modern algorithmic theory is -- see, e.g., \cite{MMZ05,GamarSud14,GamarSud17,GamarSud17a,AchlioptasR06,AchlioptasCR11,GamMZ22,AlaouiMS22}). While generically resolving mysteries surrounding gaps remains an extraordinary challenge, a strong progress has been made towards their demystification in many particular instances \cite{GamarSud14,GamarSud17,GamarSud17a,Bald15,Bald16,Bald20,Bald21,BaldassiBBZ07,BaldMPZ23,BMPZ23,AbbLiSly21a,Barb24,BarbAKZ23,GamKizPerXu22}. Two approaches based on: \textbf{\emph{(i)}} the Overlap gap property (OGP) \cite{GamarSud14,GamarSud17,GamarSud17a,GamKizPerXu22} and  \textbf{\emph{(ii)}} the Local entropy (LE) \cite{Bald15,Bald16,Bald20,Bald21,BaldassiBBZ07,BaldMPZ23,BMPZ23,AbbLiSly21a,Barb24,BarbAKZ23} distinguished themselves as particularly fruitful avenues  in recent years (for other approachers, often tailored towards specific  algorithmic groups, see, e.g., \cite{GamarnikJW20,Wein23,BandeiraAHSWZ22,HopkinsSS15,HopkinsSSS16,HopkinsKPRSS17,DiakonikolasKS17,FeldmanPV18}). Due to their combinatorial or nonconvex nature both OGP and LE are analytically fairly difficult to handle. Nonetheless, a statistical mechanics large deviation approach  allows for particularly elegant LEs formulations \cite{Bald16,Bald15}.

Introduction of fully lifted (fl) blirp interpolating comparison and its stationarized variant in \cite{Stojnicnflgscompyx23} and \cite{Stojnicsflgscompyx23} allowed studying \emph{typical} features.  Companion paper \cite{Stojnicnflldp25} presents a large deviation fl upgrade thereby  extending the range of \cite{Stojnicnflgscompyx23}'s applicability and allowing for studying \emph{atypical} features as well. To complete the practical usability, we here follow into the footsteps of \cite{Stojnicsflgscompyx23} and provide a stationarized counterpart to  \cite{Stojnicnflldp25}.  A generic nature of the presented machinery allows for a rather wide range of concrete examples where it can be practically used. As discussed in \cite{Stojnicnflldp25}, these include various perceptrons forms, spherical \cite{FPSUZ17,FraHwaUrb19,FraPar16,FraSclUrb19,FraSclUrb20,AlaSel20,StojnicGardGen13,StojnicGardSphErr13,StojnicGardSphNeg13,GarDer88,Gar88,Schlafli,Cover65,Winder,Winder61,Wendel62,Cameron60,Joseph60,BalVen87,Ven86,SchTir02,SchTir03}, binary asymmetric \cite{StojnicGardGen13,GarDer88,Gar88,StojnicDiscPercp13,KraMez89,GutSte90,KimRoc98,TalBook11a,NakSun23,BoltNakSunXu22,PerkXu21,CXu21,DingSun19,Huang24,Stojnicbinperflrdt23,LiSZ24} or symmetric  \cite{AubPerZde19,AbbLiSly21a,AbbLiSly21b,Bald20,GamKizPerXu22,PerkXu21,ElAlGam24,SahSaw23,Barb24,djalt22,BarbAKZ23}, positive/negative Hopfield  \cite{Hop82,PasFig78,Hebb49,PasShchTir94,ShchTir93,BarGenGueTan10,BarGenGueTan12,Tal98,StojnicMoreSophHopBnds10,BovGay98,TalBook11a}
 or Little \cite{BruParRit92,Little74,BarGenGue11bip,CabMarPaoPar88,AmiGutSom85,StojnicAsymmLittBnds11} forms and many others.

To make the flow of the paper smoother, in Sections \ref{sec:gencon} we  discuss the key ingredients of the stationarization mechanism and how it works on th first level of full lifting. In Section \ref{sec:rthlev}, we then  proceed with formalization of the corresponding  $r$-th ($r\in\mN$) level extensions.

\section{$\p,\q$-derivatives -- first level of lifting}
\label{sec:gencon}

 For given sets $\calX=\{\x^{(1)},\x^{(2)},\dots,\x^{(l)}\}$, $\bar{\calX}=\{\bar{\x}^{(1)},\bar{\x}^{(2)},\dots,\bar{\x}^{(l)}\}$,  and  $\calY=\{\y^{(1)},\y^{(2)},\dots,\y^{(l)}\}$ with $\x^{(i)},\bar{\x}^{(i)}\in \mR^n$ and $\y^{(i)}\in \mR^m$, vectors $\p=[\p_0,\p_1,\p_2]$ and $\q=[\q_0,\q_1,\q_2]$ with $\p_0\geq \p_1\geq \p_2= 0$ and $\q_0\geq \q_1\geq \q_2= 0$, real parameters $\beta$, $p$, and $s$ with $\beta,p>0$, and function $f_{\bar{\x}^{(i)}}(\cdot):\mR^n\rightarrow\mR$,  we consider the following
{\small\begin{equation}\label{eq:genanal1}
 f(G,u^{(4,1)},u^{(4,2)},\calX,\barcalX,\calY,\p,\q,\beta,s,p,f_{\bar{\x}^{(i)}} (\cdot))
 =
 \frac{ \log\lp \sum_{i_3=1}^{l}
\lp  \sum_{i_1=1}^{l}\lp\sum_{i_2=1}^{l}e^{\beta  D_{00}^{(i_1,i_2,i_3)}} \rp^{s}\rp^p\rp  }{p|s|\sqrt{n}},
\end{equation}}
where
\begin{eqnarray}\label{eq:genanal1a}
 D_{00}^{(i_1,i_2,i_3)} & \triangleq &
  \lp (\y^{(i_2)})^T
 G\x^{(i_1)}+\|\x^{(i_1)}\|_2\|\y^{(i_2)}\|_2 (a_1u^{(4,1)}+a_2u^{(4,2)})  + f_{\bar{\x}^{(i_3)}} (\x^{(i_1)})    \rp.
 \end{eqnarray}

\noindent We are interested in random structures and assume that elements of  $G\in \mR^{m\times n}$,  $u^{(4,1)}$, and $u^{(4,2)}$ are independent standard normals. After setting  $a_1=\sqrt{\p_0\q_0-\p_1\q_1}$ and $a_2=\sqrt{\p_1\q_1}$, we for a scalar $\m=[\m_1]$ note that \cite{Stojnicnflldp25} recognized  a key role played by the following function when studying $ f(G,u^{(4,1)},u^{(4,2)},\calX,\barcalX,\calY,\p,\q,\beta,s,p,f_{\bar{\x}^{(i)}} (\cdot))
$
\begin{eqnarray}\label{eq:genanal2}
\xi(\calX,\barcalX,\calY,\p,\q,\m,\beta,s,p,f_{\bar{\x}^{(i)}} (\cdot))  \triangleq   \frac{ \mE_{G,u^{(4,2)}} \log
\lp
\sum_{i_3=1}^{l} \lp \mE_{u^{(4,1)}}\lp \sum_{i_1=1}^{l}\lp\sum_{i_2=1}^{l}e^{\beta  D_{00}^{(i_1,i_2,i_3)}  } \rp^{s}  \rp^{\m_1} \rp^p \rp  }{p|s|\sqrt{n}\m_1}.
\end{eqnarray}
In (\ref{eq:genanal2}) as well as everywhere else in the paper, we adopt standard convention that a subscript next to  $\mE$ specifies the randomness with repsect to which the expectation is taken (if the subscript is omitted, the expectation is with respect to all sources of randomness). To study properties of $\xi(\cdot)$ we follow into the footsteps of \cite{Stojnicgscomp16,Stojnicgscompyx16,Stojnicnflgscompyx23,Stojnicnflldp25,Stojnicsflgscompyx23} and consider the following so-called interpolating function $\psi(\cdot)$
\begin{equation}\label{eq:genanal3}
\psi(t)  =
  \frac{ \mE_{G,u^{(4,2)},\u^{(2,2)},\h^{(2)}} \log
\lp
\sum_{i_3=1}^{l} \lp \mE_{u^{(4,1)},\u^{(2,1)},\h^{(1)}}\lp \sum_{i_1=1}^{l}\lp\sum_{i_2=1}^{l}e^{\beta  D_{0}^{(i_1,i_2,i_3)}  } \rp^{s}  \rp^{\m_1} \rp^p \rp  }{p|s|\sqrt{n}\m_1},
\end{equation}
where
\begin{eqnarray}\label{eq:genanal3a}
 D_0^{(i_1,i_2,i_3)} & \triangleq & \sqrt{t}(\y^{(i_2)})^T
 G\x^{(i_1)}+\sqrt{1-t}\|\x^{(i_1)}\|_2 (\y^{(i_2)})^T(b_1\u^{(2,1)}+b_2\u^{(2,2)})\nonumber \\
 & & +\sqrt{t}\|\x^{(i_1)}\|_2\|\y^{(i_2)}\|_2 ( a_1 u^{(4,1)}+ a_2 u^{(4,2)}) +\sqrt{1-t}\|\y^{(i_2)}\|_2( c_1 \h^{(1)}+ c_2 \h^{(2)})^T\x^{(i_1)}
\nonumber \\
& &  + f_{\bar{\x}^{(i_3)}} (\x^{(i_1)}),
 \end{eqnarray}
and the elements of  $\u^{(2,1)},\u^{(2,2)}\in\mR^m$ and $\h^{(1)},\h^{(2)}\in \mR^n$ are independent standard normals. We set $b_1=\sqrt{\p_0-\p_1}$, $b_2=\sqrt{\p_1}$, $c_1=\sqrt{\q_0-\q_1}$, and $c_2=\sqrt{\q_1}$. As observed in \cite{Stojnicnflldp25}, $\xi(\calX,\barcalX,\calY,\p,\q,\m,\beta,s,p,f_{\bar{\x}^{(i)}} (\cdot)) =\psi(1)$ and since $\psi(0)$ is presumably easier to handle than $\psi(1)$, one would like to connect  $\psi(1)$ to $\psi(0)$ and thereby establish a direct connection between $\xi(\calX,\barcalX,\calY,\p,\q,\m,\beta,s,p,f_{\bar{\x}^{(i)}} (\cdot))$ and $\psi(0)$. Practically speaking, this would enable to connect the original blirp and its two decoupled  linear alternatives.

We find it convenient to also set
\begin{eqnarray}\label{eq:genanal4}
\bar{\u}^{(i_1,1)} & =  & \frac{G\x^{(i_1)}}{\|\x^{(i_1)}\|_2} \nonumber \\
\u^{(i_1,3,1)} & =  & \frac{(\h^{(1)})^T\x^{(i_1)}}{\|\x^{(i_1)}\|_2} \nonumber \\
\u^{(i_1,3,2)} & =  & \frac{(\h^{(2)})^T\x^{(i_1)}}{\|\x^{(i_1)}\|_2},
\end{eqnarray}
and after denoting the $j$-th component of $\bar{\u}^{(i_1,1)}$ by $\bar{\u}_j^{(i_1,1)}$ , we have
\begin{eqnarray}\label{eq:genanal5}
\bar{\u}_j^{(i_1,1)} & =  & \frac{G_{j,1:n}\x^{(i_1)}}{\|\x^{(i_1)}\|_2},1\leq j\leq m,
\end{eqnarray}
where  $G_{j,1:n}$  is the $j$-th row of $G$. Clearly, the elements of $\bar{\u}^{(i_1,1)}$, $\u^{(2,1)}$, $\u^{(2,2)}$, $\u^{(i_1,3,1)}$, and $\u^{(i_1,3,2)}$ are i.i.d. standard normals. Setting ${\mathcal U}_k=\{u^{(4,k)},\u^{(2,k)},\h^{(k)}\},k\in\{1,2\}$, allows to rewrite (\ref{eq:genanal3}) as
\begin{equation}\label{eq:genanal6}
\psi(t)  =   \frac{\mE_{G,{\mathcal U}_2}
\log
\lp
\sum_{i_3=1}^{l}
\lp \mE_{{\mathcal U}_1} \lp \sum_{i_1=1}^{l}\lp\sum_{i_2=1}^{l}A_{i_3}^{(i_1,i_2)} \rp^{s}\rp^{\m_1} \rp^p \rp  }{p|s|\sqrt{n}\m_1}
=
 \frac{\mE_{G,{\mathcal U}_2}
\log
\lp
\sum_{i_3=1}^{l}
\lp \mE_{{\mathcal U}_1} Z_{i_3}^{\m_1} \rp^p \rp  }{p|s|\sqrt{n}\m_1},
\end{equation}
where $\beta_{i_1}=\beta\|\x^{(i_1)}\|_2$ and
\begin{eqnarray}\label{eq:genanal7}
B^{(i_1,i_2)} & \triangleq &  \sqrt{t}(\y^{(i_2)})^T\bar{\u}^{(i_1,1)}+\sqrt{1-t} (\y^{(i_2)})^T( b_1\u^{(2,1)}+b_2\u^{(2,2)}) \nonumber \\
D^{(i_1,i_2,i_3)} & \triangleq &  (B^{(i_1,i_2)}+\sqrt{t}\|\y^{(i_2)}\|_2 ( a_1 u^{(4,1)}+ a_2 u^{(4,2)})+\sqrt{1-t}\|\y^{(i_2)}\|_2( c_1 \u^{(i_1,3,1)}+c_2\u^{(i_1,3,2)})
\nonumber \\
& &
+ f_{\bar{\x}^{(i_3)}} (\x^{(i_1)}) )
\nonumber \\
A_{i_3}^{(i_1,i_2)} & \triangleq &  e^{\beta_{i_1}D^{(i_1,i_2,i_3)}}\nonumber \\
C_{i_3}^{(i_1)} & \triangleq &  \sum_{i_2=1}^{l}A_{i_3}^{(i_1,i_2)}\nonumber \\
Z_{i_3} & \triangleq & \sum_{i_1=1}^{l} \lp \sum_{i_2=1}^{l} A_{i_3}^{(i_1,i_2)}\rp^s =\sum_{i_1=1}^{l}  (C_{i_3}^{(i_1)})^s.
\end{eqnarray}
Since we are particularly interested in studying the effect $\p$ and $\q$ have on the relation between $\psi(1)$ and $\psi(0)$, we start by considering the monotonicity of $\psi(t)$ with respect to these parameters. We have for its derivative
\begin{eqnarray}\label{eq:genanal9}
\frac{d\psi(t)}{d\p_1} & = &
\frac{d}{d\p_1}\lp
\frac{\mE_{G,{\mathcal U}_2}
\log
\lp
\sum_{i_3=1}^{l}
\lp \mE_{{\mathcal U}_1} Z_{i_3}^{\m_1} \rp^p \rp  }{p|s|\sqrt{n}\m_1}
\rp
\nonumber \\
& = &  \mE_{G,{\mathcal U}_2}
\sum_{i_3=1}^{l} \frac{
\lp \mE_{{\mathcal U}_1} Z_{i_3}^{\m_1} \rp^{p-1}    }{|s|\sqrt{n}\m_1  \lp
\sum_{i_3=1}^{l}
\lp \mE_{{\mathcal U}_1} Z_{i_3}^{\m_1} \rp^p \rp   }
\frac{d \mE_{{\mathcal U}_1} Z_{i_3}^{\m_1} }{d\p_1}\nonumber \\
& = &  \mE_{G,{\mathcal U}_2}
\sum_{i_3=1}^{l} \frac{\m_1
\lp \mE_{{\mathcal U}_1} Z_{i_3}^{\m_1} \rp^{p-1}    }{|s|\sqrt{n}\m_1  \lp
\sum_{i_3=1}^{l}
\lp \mE_{{\mathcal U}_1} Z_{i_3}^{\m_1} \rp^p \rp   }
\mE_{{\mathcal U}_1} \frac{1}{Z_{i_3}^{1-\m_1}}\frac{d Z_{i_3}}{d\p_1}\nonumber \\
& = &  \mE_{G,{\mathcal U}_2}
 \sum_{i_3=1}^{l} \frac{\m_1
\lp \mE_{{\mathcal U}_1} Z_{i_3}^{\m_1} \rp^{p-1}    }{|s|\sqrt{n}\m_1  \lp
\sum_{i_3=1}^{l}
\lp \mE_{{\mathcal U}_1} Z_{i_3}^{\m_1} \rp^p \rp   }
\mE_{{\mathcal U}_1} \frac{1}{Z_{i_3}^{1-\m_1}} \frac{d\lp \sum_{i_1=1}^{l} \lp \sum_{i_2=1}^{l} A_{i_3}^{(i_1,i_2)}\rp^s \rp }{d\p_1}\nonumber \\
& = &   \mE_{G,{\mathcal U}_2}
\sum_{i_3=1}^{l} \frac{s\m_1
\lp \mE_{{\mathcal U}_1} Z_{i_3}^{\m_1} \rp^{p-1}    }{|s|\sqrt{n}\m_1  \lp
\sum_{i_3=1}^{l}
\lp \mE_{{\mathcal U}_1} Z_{i_3}^{\m_1} \rp^p \rp   }
\mE_{{\mathcal U}_1} \frac{1}{Z_{i_3}^{1-\m_1}}  \sum_{i=1}^{l} (C_{i_3}^{(i_1)})^{s-1} \nonumber \\
& & \times \sum_{i_2=1}^{l}\beta_{i_1}A_{i_3}^{(i_1,i_2)}\frac{dD^{(i_1,i_2,i_3)}}{d\p_1},
\end{eqnarray}
where from \cite{Stojnicsflgscompyx23}'s (14) we also have
 \begin{eqnarray}\label{eq:genanal10b}
\frac{dD^{(i_1,i_2,i_3)}}{d\p_1} & = & \bar{T}_2+\bar{T}_1,
\end{eqnarray}
with
\begin{eqnarray}\label{eq:genanal10c}
 \bar{T}_2 & = & \sum_{j=1}^{m}\sqrt{1-t}\frac{\y_j^{(i_2)}\u_j^{(2,2)}}{2\sqrt{\p_1}}+\sqrt{t}\q_1\frac{\|\y^{(i_2)}\|_2 u^{(4,2)}}{2\sqrt{\p_1\q_1}} \nonumber\\
\bar{T}_1 & = &  -\sqrt{1-t}\sum_{j=1}^{m} \frac{\y_j^{(i_2)}\u_j^{(2,1)}}{2\sqrt{\p_0-\p_1}}  -\sqrt{t}\q_1\frac{\|\y^{(i_2)}\|_2 u^{(4,1)}}{2\sqrt{\p_0\q_0-\p_1\q_1}}.
\end{eqnarray}
A combination of (\ref{eq:genanal9}) and (\ref{eq:genanal10b}) then gives
\begin{equation}\label{eq:genanal10d}
\frac{d\psi(t)}{d\p_1}  =   \mE_{G,{\mathcal U}_2}
\sum_{i_3=1}^{l} \frac{s\m_1
\lp \mE_{{\mathcal U}_1} Z_{i_3}^{\m_1} \rp^{p-1}    }{|s|\sqrt{n}\m_1  \lp
\sum_{i_3=1}^{l}
\lp \mE_{{\mathcal U}_1} Z_{i_3}^{\m_1} \rp^p \rp   }
\mE_{{\mathcal U}_1} \frac{1}{Z_{i_3}^{1-\m_1}}  \sum_{i=1}^{l} (C_{i_3}^{(i_1)})^{s-1}
\sum_{i_2=1}^{l}\beta_{i_1}A_{i_3}^{(i_1,i_2)} \lp \bar{T}_2+ \bar{T}_1\rp.
\end{equation}
Moreover, analogously to \cite{Stojnicsflgscompyx23}'s (16)-(18) we first have
\begin{equation}\label{eq:genanal10e}
\frac{d\psi(t)}{d\p_1}  =       \frac{\mbox{sign}(s)}{2 \sqrt{n}} \sum_{i_1=1}^{l}  \sum_{i_2=1}^{l}
\beta_{i_1}\lp T_2^{\p}- T_1^{\p}\rp,
\end{equation}
where
\begin{eqnarray}\label{eq:genanal10f}
T_2^{\p} & = & \frac{\sqrt{1-t}}{\sqrt{\p_1}}\sum_{j=1}^{m} T_{2,1,j}^{\p} +\frac{\sqrt{t}\q_1}{\sqrt{\p_1\q_1}}\|\y^{(i_2)}\|_2T_{2,3}^{(\p,\q)} \nonumber\\
T_1^{\p} & = & \frac{\sqrt{1-t}}{\sqrt{\p_0-\p_1}}\sum_{j=1}^{m} T_{1,1,j}^{\p} +\frac{\sqrt{t}\q_1}{\sqrt{\p_0\q_0-\p_1\q_1}}\|\y^{(i_2)}\|_2T_{1,3}^{(\p,\q)},
\end{eqnarray}
and
\begin{eqnarray}\label{eq:genanal10g}
 T_{2,1,j}^{\p} & = &  \mE_{G,{\mathcal U}_2}\lp
\sum_{i_3=1}^{l} \frac{
\lp \mE_{{\mathcal U}_1} Z_{i_3}^{\m_1} \rp^{p-1}    }{  \lp
\sum_{i_3=1}^{l}
\lp \mE_{{\mathcal U}_1} Z_{i_3}^{\m_1} \rp^p \rp   }
\mE_{{\mathcal U}_1}\frac{(C_{i_3}^{(i_1)})^{s-1} A_{i_3}^{(i_1,i_2)} \y_j^{(i_2)}\u_j^{(2,2)}}{Z_{i_3}^{1-\m_1}} \rp \nonumber \\
 T_{2,3}^{(\p,\q)} & = &  \mE_{G,{\mathcal U}_2}\lp
 \sum_{i_3=1}^{l} \frac{
\lp \mE_{{\mathcal U}_1} Z_{i_3}^{\m_1} \rp^{p-1}    }{  \lp
\sum_{i_3=1}^{l}
\lp \mE_{{\mathcal U}_1} Z_{i_3}^{\m_1} \rp^p \rp   }
 \mE_{{\mathcal U}_1}\frac{(C_{i_3}^{(i_1)})^{s-1} A_{i_3}^{(i_1,i_2)} u^{(4,2)}}{Z_{i_3}^{1-\m_1}} \rp \nonumber \\
T_{1,1,j}^{\p} & = &  \mE_{G,{\mathcal U}_2} \lp
\sum_{i_3=1}^{l} \frac{
\lp \mE_{{\mathcal U}_1} Z_{i_3}^{\m_1} \rp^{p-1}    }{  \lp
\sum_{i_3=1}^{l}
\lp \mE_{{\mathcal U}_1} Z_{i_3}^{\m_1} \rp^p \rp   }
\mE_{{\mathcal U}_1}\frac{(C_{i_3}^{(i_1)})^{s-1} A_{i_3}^{(i_1,i_2)} \y_j^{(i_2)}\u_j^{(2,1)}}{Z_{i_3}^{1-\m_1}}\rp \nonumber \\
 T_{1,3}^{(\p,\q)} & = &  \mE_{G,{\mathcal U}_2}\lp
 \sum_{i_3=1}^{l} \frac{
\lp \mE_{{\mathcal U}_1} Z_{i_3}^{\m_1} \rp^{p-1}    }{  \lp
\sum_{i_3=1}^{l}
\lp \mE_{{\mathcal U}_1} Z_{i_3}^{\m_1} \rp^p \rp   }
 \mE_{{\mathcal U}_1}\frac{(C_{i_3}^{(i_1)})^{s-1} A_{i_3}^{(i_1,i_2)} u^{(4,1)}}{Z_{i_3}^{1-\m_1}}\rp,
\end{eqnarray}
and then analogously to \cite{Stojnicsflgscompyx23}'s (19-21)
\begin{equation}\label{eq:genanal10e1}
\frac{d\psi(t)}{d\q_1}  =       \frac{\mbox{sign}(s)}{2 \sqrt{n}} \sum_{i_1=1}^{l}  \sum_{i_2=1}^{l}
\beta_{i_1}\lp T_2^{\q}- T_1^{\q}\rp,
\end{equation}
where
\begin{eqnarray}\label{eq:genanal10f1}
T_2^{\q} & = & \frac{\sqrt{1-t}}{\sqrt{\q_1}}\|\y^{(i_2)}\|_2 T_{2,2}^{\q} +\frac{\sqrt{t}\p_1}{\sqrt{\p_1\q_1}}\|\y^{(i_2)}\|_2T_{2,3}^{(\p,\q)} \nonumber\\
T_1^{\q} & = & \frac{\sqrt{1-t}}{\sqrt{\q_0-\q_1}} \|\y^{(i_2)}\|_2T_{1,2}^{\q} +\frac{\sqrt{t}\p_1}{\sqrt{\p_0\q_0-\p_1\q_1}}\|\y^{(i_2)}\|_2T_{1,3}^{(\p,\q)},
\end{eqnarray}
and
\begin{eqnarray}\label{eq:genanal10g1}
 T_{2,2}^{\q} & = &  \mE_{G,{\mathcal U}_2}\lp
 \sum_{i_3=1}^{l} \frac{
\lp \mE_{{\mathcal U}_1} Z_{i_3}^{\m_1} \rp^{p-1}    }{  \lp
\sum_{i_3=1}^{l}
\lp \mE_{{\mathcal U}_1} Z_{i_3}^{\m_1} \rp^p \rp   }
 \mE_{{\mathcal U}_1}\frac{(C_{i_3}^{(i_1)})^{s-1} A_{i_3}^{(i_1,i_2)} \u^{(i_1,3,2)}}{Z_{i_3}^{1-\m_1}} \rp \nonumber \\
 T_{1,2}^{\q} & = &  \mE_{G,{\mathcal U}_2} \lp
 \sum_{i_3=1}^{l} \frac{
\lp \mE_{{\mathcal U}_1} Z_{i_3}^{\m_1} \rp^{p-1}    }{  \lp
\sum_{i_3=1}^{l}
\lp \mE_{{\mathcal U}_1} Z_{i_3}^{\m_1} \rp^p \rp   }
 \mE_{{\mathcal U}_1}\frac{(C_{i_3}^{(i_1)})^{s-1} A_{i_3}^{(i_1,i_2)} \u^{(i_1,3,1)}}{Z_{i_3}^{1-\m_1}}\rp.
 \end{eqnarray}
The above set of equations (\ref{eq:genanal10d})-(\ref{eq:genanal10g1}) is sufficient to determine $\p_1$ and $\q_1$ derivatives of $\psi(t)$. We handle separately each of the six key terms from (\ref{eq:genanal10g}) and  (\ref{eq:genanal10g1}). Also, since we to a large degree rely on  \cite{Stojnicgscompyx16,Stojnicnflgscompyx23,Stojnicsflgscompyx23,Stojnicnflldp25}  we parallel their flows of presentation as closely as possible.

\subsection{Computing $\frac{d\psi(t)}{d\p_1}$ and $\frac{d\psi(t)}{d\q_1}$ -- first level  }
\label{sec:compderivative}

As was the case in \cite{Stojnicnflgscompyx23,Stojnicsflgscompyx23,Stojnicnflldp25}, we carefully select the order in which the terms appearing in (\ref{eq:genanal10g}) are handled. In particular, we split the six terms into two groups of three. We first handle the  $T_1$--group ($T_{1,1,j}^{\p}$, $T_{1,2}^{\q}$, and $T_{1,3}^{(\p,\q)}$) and then $T_2$--group ($T_{2,1,j}^{\p}$,$ T_{2,2}^{\q}$, and $T_{2,3}^{(\p,\q)}$).

\subsubsection{$T_1$--group -- first level}
\label{sec:handlT1}

As stated above, each of the three $T_1$--group terms is handled separately.

\underline{\textbf{\emph{Determining}} $T_{1,1,j}^{\p}$}
\label{sec:hand1T11}

Comparing to  \cite{Stojnicnflldp25}'s Section  \ref{sec:hand1T11}, one notes that appearance of the scaling factors $b_1$ and $b_2$ is the only difference. Keeping in mind that this correspondingly affects the rescaling of $\u_j^{(2,1)}$ variances, we apply Gaussian integration by parts and write analogously to \cite{Stojnicnflldp25}'s (20)
 \begin{eqnarray}\label{eq:liftgenAanal19}
T_{1,1,j}^{\p} & = & \mE_{G,{\mathcal U}_2}\lp
 \sum_{i_3=1}^{l} \frac{
\lp \mE_{{\mathcal U}_1} Z_{i_3}^{\m_1} \rp^{p-1}    }{  \lp
\sum_{i_3=1}^{l}
\lp \mE_{{\mathcal U}_1} Z_{i_3}^{\m_1} \rp^p \rp   }
\mE_{{\mathcal U}_1}  \frac{(C_{i_3}^{(i_1)})^{s-1} A_{i_3}^{(i_1,i_2)}\y_j^{(i_2)} \u_j^{(2,1)} }{Z_{i_3}^{1-\m_1}}\rp \nonumber \\
& = & \mE_{G,{\mathcal U}_2} \lp
 \sum_{i_3=1}^{l} \frac{
\lp \mE_{{\mathcal U}_1} Z_{i_3}^{\m_1} \rp^{p-1}    }{  \lp
\sum_{i_3=1}^{l}
\lp \mE_{{\mathcal U}_1} Z_{i_3}^{\m_1} \rp^p \rp   }
\mE_{{\mathcal U}_1}\lp \mE_{{\mathcal U}_1}(\u_j^{(2,1)}\u_j^{(2,1)}) \frac{d}{d\u_j^{(2,1)}}\lp\frac{(C_{i_3}^{(i_1)})^{s-1} A_{i_3}^{(i_1,i_2)}\y_j^{(i_2)}}{Z_{i_3}^{1-\m_1}}\rp\rp\rp \nonumber \\
& = & \mE_{G,{\mathcal U}_2}\lp
 \sum_{i_3=1}^{l} \frac{
\lp \mE_{{\mathcal U}_1} Z_{i_3}^{\m_1} \rp^{p-1}    }{  \lp
\sum_{i_3=1}^{l}
\lp \mE_{{\mathcal U}_1} Z_{i_3}^{\m_1} \rp^p \rp   }
\mE_{{\mathcal U}_1}(\u_j^{(2,1)}\u_j^{(2,1)})\mE_{{\mathcal U}_1}\lp  \frac{d}{d\u_j^{(2,1)}}\lp\frac{(C_{i_3}^{(i_1)})^{s-1} A_{i_3}^{(i_1,i_2)}\y_j^{(i_2)}}{Z_{i_3}^{1-\m_1}}\rp\rp\rp \nonumber \\
& = & \mE_{G,{\mathcal U}_2} \lp
 \sum_{i_3=1}^{l} \frac{
\lp \mE_{{\mathcal U}_1} Z_{i_3}^{\m_1} \rp^{p-1}    }{  \lp
\sum_{i_3=1}^{l}
\lp \mE_{{\mathcal U}_1} Z_{i_3}^{\m_1} \rp^p \rp   }
\mE_{{\mathcal U}_1}\lp  \frac{d}{d\u_j^{(2,1)}}\lp\frac{(C_{i_3}^{(i_1)})^{s-1} A_{i_3}^{(i_1,i_2)}\y_j^{(i_2)}}{Z_{i_3}^{1-\m_1}}\rp\rp\rp.
\end{eqnarray}
One now notes that the inner expectation on the righthand side of the last equality structurally matches the corresponding one from the first part of \cite{Stojnicnflldp25}'s Section \ref{sec:hand1T11} with  $\u^{(2,1)}$ scaled by $\sqrt{\p_0-\p_1}$. Following \cite{Stojnicnflldp25} we can then write
\begin{eqnarray}\label{eq:liftgenAanal19a}
T_{1,1,j}^{\p} & = &    \sqrt{\p_0-\p_1}\mE_{G,{\mathcal U}_2} \lp
 \sum_{i_3=1}^{l} \frac{
\lp \mE_{{\mathcal U}_1} Z_{i_3}^{\m_1} \rp^{p-1}    }{  \lp
\sum_{i_3=1}^{l}
\lp \mE_{{\mathcal U}_1} Z_{i_3}^{\m_1} \rp^p \rp   }
\lp \Theta_1+\Theta_2 \rp\rp,
\end{eqnarray}
with $\Theta_1$ and $\Theta_2$ as in \cite{Stojnicnflldp25}'s (22)
{\small\begin{eqnarray}\label{eq:liftgenAanal19c}
\Theta_1 &  = &  \mE_{{\mathcal U}_1} \Bigg( \Bigg. \frac{\y_j^{(i_2)} \lp (C_{i_3}^{(i_1)})^{s-1}\beta_{i_1}A_{i_3}^{(i_1,i_2)}\y_j^{(i_2)}\sqrt{1-t} +A_{i_3}^{(i_1,i_2)}(s-1)(C_{i_3}^{(i_1)})^{s-2}\beta_{i_1}\sum_{p_2=1}^{l}A_{i_3}^{(i_1,p_2)}\y_j^{(p_2)}\sqrt{1-t}\rp}{Z_{i_3}^{1-\m_1}}\Bigg. \Bigg)\Bigg. \Bigg) \nonumber \\
\Theta_2 & = & -(1-\m_1)\mE_{{\mathcal U}_1} \lp\sum_{p_1=1}^{l}
\frac{(C_{i_3}^{(i_1)})^{s-1} A_{i_3}^{(i_1,i_2)}\y_j^{(i_2)}}{Z_{i_3}^{2-\m_1}}
s  (C_{i_3}^{(p_1)})^{s-1}\sum_{p_2=1}^{l}\beta_{p_1}A_{i_3}^{(p_1,p_2)}\y_j^{(p_2)}\sqrt{1-t}\rp\Bigg.\Bigg).
\end{eqnarray}}
The above then allows for the following observation
\begin{eqnarray}\label{eq:liftgenAanal19d}
\sum_{i_1=1}^{l}\sum_{i_2=1}^{l}\sum_{j=1}^{m} \lp
\sum_{i_3=1}^{l} \frac{
\lp \mE_{{\mathcal U}_1} Z_{i_3}^{\m_1} \rp^{p-1}    }{ \lp
\sum_{i_3=1}^{l}
\lp \mE_{{\mathcal U}_1} Z_{i_3}^{\m_1} \rp^p \rp   }
 \frac{\beta_{i_1}\Theta_1}{\sqrt{1-t}}\rp
 \hspace{-.02in} &  = & \hspace{-.0in} \Bigg ( \Bigg.
 \sum_{i_3=1}^{l} \frac{
\lp \mE_{{\mathcal U}_1} Z_{i_3}^{\m_1} \rp^{p}    }{ \lp
\sum_{i_3=1}^{l}
\lp \mE_{{\mathcal U}_1} Z_{i_3}^{\m_1} \rp^p \rp   }
 \mE_{{\mathcal U}_1}\frac{Z_{i_3}^{\m_1}}{\mE_{{\mathcal U}_1} Z_{i_3}^{\m_1}}
\nonumber  \\
& & \times
 \sum_{i_1=1}^{l}\frac{(C_{i_3}^{(i_1)})^s}{Z_{i_3}}\sum_{i_2=1}^{l}\frac{A_{i_3}^{(i_1,i_2)}}{C_{i_3}^{(i_1)}}\beta_{i_1}^2\|\y^{(i_2)}\|_2^2
 \Bigg ) \Bigg.
  \nonumber\\
& & +  \Bigg ( \Bigg.
 \sum_{i_3=1}^{l} \frac{
\lp \mE_{{\mathcal U}_1} Z_{i_3}^{\m_1} \rp^{p}    }{ \lp
\sum_{i_3=1}^{l}
\lp \mE_{{\mathcal U}_1} Z_{i_3}^{\m_1} \rp^p \rp   }
 \mE_{{\mathcal U}_1}\frac{Z_{i_3}^{\m_1}}{\mE_{{\mathcal U}_1} Z_{i_3}^{\m_1}}
 \nonumber \\
 & &
 \times
 \sum_{i_1=1}^{l}\frac{(s-1)(C_{i_3}^{(i_1)})^s}{Z_{i_3}}\sum_{i_2=1}^{l}\sum_{p_2=1}^{l}\frac{A_{i_3}^{(i_1,i_2)}A_{i_3}^{(i_1,p_2)}}{(C_{i_3}^{(i_1)})^2}
 \nonumber \\
 & &
 \times
 \beta_{i_1}^2(\y^{(p_2)})^T\y^{(i_2)}
 \Bigg ) \Bigg. .\nonumber \\
 \end{eqnarray}
 After introducing operator
\begin{eqnarray}\label{eq:genAanal19e}
 \Phi_{{\mathcal U}_1}^{(i_3)} & \triangleq &  \mE_{{\mathcal U}_{1}} \frac{Z_{i_3}^{\m_1}}{\mE_{{\mathcal U}_{1}}Z_{i_3}^{\m_1}}  \triangleq  \mE_{{\mathcal U}_{1}} \lp \frac{Z_{i_3}^{\m_1}}{\mE_{{\mathcal U}_{1}}Z_{i_3}^{\m_1}}\lp \cdot \rp\rp,
 \end{eqnarray}
and establishing measures
\begin{eqnarray}\label{eq:genAanal19e1}
  \gamma_{00}(i_3) & = &
  \frac{
\lp \mE_{{\mathcal U}_1} Z_{i_3}^{\m_1} \rp^{p}    }{ \lp
\sum_{i_3=1}^{l}
\lp \mE_{{\mathcal U}_1} Z_{i_3}^{\m_1} \rp^p \rp   }
\nonumber \\
  \gamma_0(i_1,i_2;i_3) & = &
\frac{(C_{i_3}^{(i_1)})^{s}}{Z_{i_3}}  \frac{A_{i_3}^{(i_1,i_2)}}{C_{i_3}^{(i_1)}} \nonumber \\
\gamma_{01}^{(1)}  & = &  \gamma_{00}(i_3)\Phi_{{\mathcal U}_1}^{(i_3)} (\gamma_0(i_1,i_2;i_3)) \nonumber \\
\gamma_{02}^{(1)}  & = &  \gamma_{00}(i_3)\Phi_{{\mathcal U}_1}^{(i_3)} (\gamma_0(i_1,i_2;i_3)\times \gamma_0(i_1,p_2;i_3)) \nonumber \\
\gamma_{1}^{(1)}   & = &  \gamma_{00}(i_3)\Phi_{{\mathcal U}_1}^{(i_3)}  \lp \gamma_0(i_1,i_2;i_3)\times \gamma_0(p_1,p_2;i_3) \rp \nonumber \\.
\gamma_{21}^{(1)}   & = &  \gamma_{00}(i_3)\Phi_{{\mathcal U}_1}^{(i_3)}   \gamma_0(i_1,i_2;i_3)  \times  \gamma_{00}(p_3) \Phi_{{\mathcal U}_1}^{(p_3)}  \gamma_0(p_1,p_2;p_3)
\nonumber  \\
\gamma_{22}^{(1)}   & = &  \gamma_{00}(i_3)\lp \Phi_{{\mathcal U}_1}^{(i_3)}   \gamma_0(i_1,i_2;i_3)  \times  \Phi_{{\mathcal U}_1}^{(i_3)}  \gamma_0(p_1,p_2;i_3) \rp.
\end{eqnarray}
one further writes
\begin{eqnarray}\label{eq:liftgenAanal19g}
\sum_{i_1=1}^{l}\sum_{i_2=1}^{l}\sum_{j=1}^{m} \lp
\sum_{i_3=1}^{l} \frac{
\lp \mE_{{\mathcal U}_1} Z_{i_3}^{\m_1} \rp^{p-1}    }{ \lp
\sum_{i_3=1}^{l}
\lp \mE_{{\mathcal U}_1} Z_{i_3}^{\m_1} \rp^p \rp   }
 \frac{\beta_{i_1}\Theta_1}{\sqrt{1-t}}\rp
&  = & \beta^2 \Bigg ( \Bigg. \langle \|\x^{(i_1)}\|_2^2\|\y^{(i_2)}\|_2^2\rangle_{\gamma_{01}^{(1)}}
\nonumber \\
& &
+  (s-1) \langle \|\x^{(i_1)}\|_2^2(\y^{(p_2)})^T\y^{(i_2)}\rangle_{\gamma_{02}^{(1)}} \Bigg ) \Bigg. .
 \end{eqnarray}
and analogously relying on (\ref{eq:liftgenAanal19c})
\begin{eqnarray}\label{eq:liftgenAanal19h}
\sum_{i_1=1}^{l}\sum_{i_2=1}^{l}\sum_{j=1}^{m} \lp
\sum_{i_3=1}^{l} \frac{
\lp \mE_{{\mathcal U}_1} Z_{i_3}^{\m_1} \rp^{p-1}    }{ \lp
\sum_{i_3=1}^{l}
\lp \mE_{{\mathcal U}_1} Z_{i_3}^{\m_1} \rp^p \rp   }
\frac{\beta_{i_1}\Theta_2}{\sqrt{1-t}}\rp
 \hspace{-.07in}
 & = &  \hspace{-.05in} -s(1-\m_1) \mE_{G,{\mathcal U}_2} \Bigg( \Bigg.
 \sum_{i_3=1}^{l} \frac{
\lp \mE_{{\mathcal U}_1} Z_{i_3}^{\m_1} \rp^{p}    }{ \lp
\sum_{i_3=1}^{l}
\lp \mE_{{\mathcal U}_1} Z_{i_3}^{\m_1} \rp^p \rp   }
\nonumber \\
& &
 \hspace{-.05in} \times
 \frac{Z_{i_3}^{\m_1}}{\mE_{{\mathcal U}_1} Z_{i_3}^{\m_1}} \sum_{i_1=1}^{l}\frac{(C_{i_3}^{(i_1)})^s}{Z_{i_3}}\sum_{i_2=1}^{l}
\frac{A_{i_3}^{(i_1,i_2)}}{C_{i_3}^{(i_1)}} \nonumber \\
& &
 \hspace{-.05in}
\times
 \sum_{p_1=1}^{l} \frac{(C_{i_3}^{(p_1)})^s}{Z_{i_3}}\sum_{p_2=1}^{l}\frac{A_{i_3}^{(p_1,p_2)}}{C_{i_3}^{(p_1)}} \beta_{i_1}\beta_{p_1}(\y^{(p_2)})^T\y^{(i_2)} \Bigg.\Bigg)\nonumber \\
& =&  \hspace{-.05in} -s\beta^2(1-\m_1) \mE_{G,{\mathcal U}_2} \langle \|\x^{(i_1)}\|_2\|\x^{(p_1)}\|_2(\y^{(p_2)})^T\y^{(i_2)} \rangle_{\gamma_{1}^{(1)}}.
\nonumber \\
\end{eqnarray}
Combining further  (\ref{eq:liftgenAanal19a}), (\ref{eq:liftgenAanal19g}), and (\ref{eq:liftgenAanal19h}) we also obtain
\begin{eqnarray}\label{eq:liftgenAanal19i}
\sum_{i_1=1}^{l}\sum_{i_2=1}^{l}\sum_{j=1}^{m} \beta_{i_1}\frac{\sqrt{1-t}}{\sqrt{\p_0-\p_1}}T_{1,1,j}^{\p}
& = &
 (1-t) \mE_{G,{\mathcal U}_2} \lp
\sum_{i_3=1}^{l} \frac{
\lp \mE_{{\mathcal U}_1} Z_{i_3}^{\m_1} \rp^{p-1}    }{ \lp
\sum_{i_3=1}^{l}
\lp \mE_{{\mathcal U}_1} Z_{i_3}^{\m_1} \rp^p \rp   }
\lp \frac{\beta_{i_1}\Theta_1}{\sqrt{1-t}}+\frac{\beta_{i_1}\Theta_2}{\sqrt{1-t}} \rp\rp\nonumber \\
& = &  (1-t) \beta^2  \Bigg ( \Bigg .
\mE_{G,{\mathcal U}_2} \langle \|\x^{(i_1)}\|_2^2\|\y^{(i_2)}\|_2^2\rangle_{\gamma_{01}^{(1)}}
 \nonumber \\
 & &
 +  (s-1)\mE_{G,{\mathcal U}_2}\langle \|\x^{(i_1)}\|_2^2(\y^{(p_2)})^T\y^{(i_2)}\rangle_{\gamma_{02}^{(1)}} \Bigg ) \Bigg . \nonumber \\
& & -  (1-t) s\beta^2(1-\m_1)\langle \|\x^{(i_1)}\|_2\|\x^{(p_1)}\|_2(\y^{(p_2)})^T\y^{(i_2)} \rangle_{\gamma_{1}^{(1)}}.
 \end{eqnarray}
Both the  $\gamma$ measures from (\ref{eq:genAanal19e1}) and the $\Phi(\cdot)$ operators from (\ref{eq:genAanal19e}) are  functions of $t$ which implies that all $\gamma$ dependent functions depend  on $t$ as well. To lighten notation, we skip explicitly stating  $t$-dependence.

\underline{\textbf{\emph{Determining}} $T_{1,2}^{\q}$}
\label{sec:hand1T12}

Relying again on the Gaussian integration by parts one finds
\begin{eqnarray}\label{eq:liftgenBanal20}
T_{1,2}^{\q} & = & \mE_{G,{\mathcal U}_2} \lp
 \sum_{i_3=1}^{l} \frac{
\lp \mE_{{\mathcal U}_1} Z_{i_3}^{\m_1} \rp^{p-1}    }{  \lp
\sum_{i_3=1}^{l}
\lp \mE_{{\mathcal U}_1} Z_{i_3}^{\m_1} \rp^p \rp   }
\mE_{{\mathcal U}_1} \frac{(C_{i_3}^{(i_1)})^{s-1} A_{i_3}^{(i_1,i_2)}\u^{(i_1,3,1)}}{Z_{i_3}^{1-\m_1}}\rp \nonumber \\
& = & \mE_{G,{\mathcal U}_2} \lp
 \sum_{i_3=1}^{l} \frac{
\lp \mE_{{\mathcal U}_1} Z_{i_3}^{\m_1} \rp^{p-1}    }{  \lp
\sum_{i_3=1}^{l}
\lp \mE_{{\mathcal U}_1} Z_{i_3}^{\m_1} \rp^p \rp   }
\mE_{{\mathcal U}_1} \sum_{p_1=1}^{l}\mE_{{\mathcal U}_1}(\u^{(i_1,3,1)}\u^{(p_1,3,1)}) \frac{d}{d\u^{(p_1,3,1)}}\lp\frac{(C_{i_3}^{(i_1)})^{s-1} A_{i_3}^{(i_1,i_2)}}{Z_{i_3}^{1-\m_1}}\rp\rp \nonumber \\
& = & \frac{\sqrt{\q_0-\q_1}}{\q_0-\q_1}\mE_{G,{\mathcal U}_2} \Bigg(\Bigg.
 \sum_{i_3=1}^{l} \frac{
\lp \mE_{{\mathcal U}_1} Z_{i_3}^{\m_1} \rp^{p-1}    }{  \lp
\sum_{i_3=1}^{l}
\lp \mE_{{\mathcal U}_1} Z_{i_3}^{\m_1} \rp^p \rp   }
\mE_{{\mathcal U}_1} \sum_{p_1=1}^{l}\frac{(\sqrt{\q_0-\q_1}\x^{(i_1)})^T\sqrt{\q_0-\q_1}\x^{(p_1)}}{\|\x^{(i_1)}\|_2\|\x^{(p_1)}\|_2} \nonumber \\
& & \times
\frac{d}{d\lp \sqrt{\q_0-\q_1} \u^{(p_1,3,1)}\rp}\lp\frac{(C_{i_3}^{(i_1)})^{s-1} A_{i_3}^{(i_1,i_2)}}{Z_{i_3}^{1-\m_1}}\rp\Bigg.\Bigg) \nonumber\\
& = & \frac{\sqrt{\q_0-\q_1}}{\q_0-\q_1} T_{1,2},
\end{eqnarray}
where  from  \cite{Stojnicnflgscompyx23}'s (32)
\begin{eqnarray}\label{eq:liftgenBanal20a0}
\sum_{i_1=1}^{l}\sum_{i_2=1}^{l} \beta_{i_1}\|\y^{(i_2)}\|_2 T_{1,2} & = & \sqrt{1-t}(\q_0-\q_1)\beta^2
\Bigg( \Bigg.\mE_{G,{\mathcal U}_2}\langle \|\x^{(i_1)}\|_2^2\|\y^{(i_2)}\|_2^2\rangle_{\gamma_{01}^{(1)}} \nonumber \\
& & +   (s-1)\mE_{G,{\mathcal U}_2}\langle \|\x^{(i_1)}\|_2^2 \|\y^{(i_2)}\|_2\|\y^{(p_2)}\|_2\rangle_{\gamma_{02}^{(1)}}\Bigg.\Bigg)  \nonumber \\
& & - \sqrt{1-t}(\q_0-\q_1)s\beta^2(1-\m_1)\mE_{G,{\mathcal U}_2}\langle (\x^{(p_1)})^T\x^{(i_1)}\|\y^{(i_2)}\|_2\|\y^{(p_2)}\|_2 \rangle_{\gamma_{1}^{(1)}}.\nonumber \\
\end{eqnarray}
A combination of (\ref{eq:liftgenBanal20}) and (\ref{eq:liftgenBanal20a0}) then immediately gives
\begin{eqnarray}\label{eq:liftgenBanal20b}
\sum_{i_1=1}^{l}\sum_{i_2=1}^{l} \beta_{i_1}\|\y^{(i_2)}\|_2 \frac{\sqrt{1-t}}{\sqrt{\q_0-\q_1}}T_{1,2}^{\q} & = & (1-t)\beta^2
\Bigg( \Bigg.\mE_{G,{\mathcal U}_2}\langle \|\x^{(i_1)}\|_2^2\|\y^{(i_2)}\|_2^2\rangle_{\gamma_{01}^{(1)}} \nonumber \\
& & +   (s-1)\mE_{G,{\mathcal U}_2}\langle \|\x^{(i_1)}\|_2^2 \|\y^{(i_2)}\|_2\|\y^{(p_2)}\|_2\rangle_{\gamma_{02}^{(1)}}\Bigg.\Bigg)  \nonumber \\
& & - (1-t)s\beta^2(1-\m_1)\mE_{G,{\mathcal U}_2}\langle (\x^{(p_1)})^T\x^{(i_1)}\|\y^{(i_2)}\|_2\|\y^{(p_2)}\|_2 \rangle_{\gamma_{1}^{(1)}}.\nonumber \\
\end{eqnarray}

\underline{\textbf{\emph{Determining}} $T_{1,3}^{(\p,\q)}$}
\label{sec:hand1T13}

Applying Gaussian integration by parts, we have the following analogue to \cite{Stojnicnflldp25}'s (33)
\begin{eqnarray}\label{eq:liftgenCanal21}
T_{1,3}^{(\p,\q)} \hspace{-.1in} & = & \mE_{G,{\mathcal U}_2} \lp
 \sum_{i_3=1}^{l} \frac{
\lp \mE_{{\mathcal U}_1} Z_{i_3}^{\m_1} \rp^{p-1}    }{  \lp
\sum_{i_3=1}^{l}
\lp \mE_{{\mathcal U}_1} Z_{i_3}^{\m_1} \rp^p \rp   }
\mE_{{\mathcal U}_1}  \frac{(C_{i_3}^{(i_1)})^{s-1} A_{i_3}^{(i_1,i_2)}u^{(4,1)}}{Z_{i_3}^{1-\m_1}} \rp \nonumber \\
& = & \mE_{G,{\mathcal U}_2} \lp
 \sum_{i_3=1}^{l} \frac{
\lp \mE_{{\mathcal U}_1} Z_{i_3}^{\m_1} \rp^{p-1}    }{  \lp
\sum_{i_3=1}^{l}
\lp \mE_{{\mathcal U}_1} Z_{i_3}^{\m_1} \rp^p \rp   }
\mE_{{\mathcal U}_1} \lp\mE_{{\mathcal U}_1} (u^{(4,1)}u^{(4,1)})\lp\frac{d}{du^{(4,1)}} \lp\frac{(C_{i_3}^{(i_1)})^{s-1} A_{i_3}^{(i_1,i_2)}u^{(4,1)}}{Z_{i_3}^{1-\m_1}}\rp \rp\rp\rp \nonumber \\
 & = & \frac{\sqrt{\p_0\q_0-\p_1\q_1}}{\p_0\q_0-\p_1\q_1}\mE_{G,{\mathcal U}_2} \Bigg( \Bigg.
  \sum_{i_3=1}^{l} \frac{
\lp \mE_{{\mathcal U}_1} Z_{i_3}^{\m_1} \rp^{p-1}    }{  \lp
\sum_{i_3=1}^{l}
\lp \mE_{{\mathcal U}_1} Z_{i_3}^{\m_1} \rp^p \rp   }
 \mE_{{\mathcal U}_1} (\sqrt{\p_0\q_0-\p_1\q_1}u^{(4,1)}\sqrt{\p_0\q_0-\p_1\q_1}u^{(4,1)}) \nonumber \\
 & & \times \mE_{{\mathcal U}_1} \lp\frac{d}{d \lp \sqrt{\p_0\q_0-\p_1\q_1} u^{(4,1)} \rp} \lp\frac{(C_{i_3}^{(i_1)})^{s-1} A_{i_3}^{(i_1,i_2)}u^{(4,1)}}{Z_{i_3}^{1-\m_1}}\rp\rp\Bigg.\Bigg) \nonumber \\
 & = & \frac{1}{\sqrt{\p_0\q_0-\p_1\q_1}}T_{1,3},
\end{eqnarray}
where   \cite{Stojnicnflldp25}'s (35) gives
\begin{eqnarray}\label{eq:liftgenCanal21b01}
\sum_{i_1=1}^{l}\sum_{i_2=1}^{l} \beta_{i_1}\|\y^{(i_2)}\|_2 T_{1,3} & = & \sqrt{t}(\p_0\q_0-\p_1\q_1)\beta^2 \Bigg( \Bigg. \mE_{G,{\mathcal U}_2}\langle \|\x^{(i_1)}\|_2^2\|\y^{(i_2)}\|_2^2\rangle_{\gamma_{01}^{(1)}} \nonumber \\
& & +   (s-1)\mE_{G,{\mathcal U}_2}\langle \|\x^{(i_1)}\|_2^2 \|\y^{(i_2)}\|_2\|\y^{(p_2)}\|_2\rangle_{\gamma_{02}^{(1)}}\Bigg.\Bigg) \nonumber \\
& & - \sqrt{t}(\p_0\q_0-\p_1\q_1)s\beta^2(1-\m_1)\mE_{G,{\mathcal U}_2}\langle \|\x^{(i_1)}\|_2\|\x^{(p_`)}\|_2\|\y^{(i_2)}\|_2\|\y^{(p_2)}\|_2 \rangle_{\gamma_{1}^{(1)}}. \nonumber \\
\end{eqnarray}
Combining (\ref{eq:liftgenCanal21}) and (\ref{eq:liftgenCanal21b01}) one then finds analogously to (\ref{eq:liftgenBanal20b})
\begin{eqnarray}\label{eq:liftgenCanal21b}
\sum_{i_1=1}^{l}\sum_{i_2=1}^{l} \beta_{i_1}\|\y^{(i_2)}\|_2 \frac{\sqrt{t}}{\sqrt{\p_0\q_0-\p_1\q_1}}T_{1,3}^{(\p,\q)} & = & t\beta^2 \Bigg( \Bigg. \mE_{G,{\mathcal U}_2}\langle \|\x^{(i_1)}\|_2^2\|\y^{(i_2)}\|_2^2\rangle_{\gamma_{01}^{(1)}} \nonumber \\
& & +   (s-1)\mE_{G,{\mathcal U}_2}\langle \|\x^{(i_1)}\|_2^2 \|\y^{(i_2)}\|_2\|\y^{(p_2)}\|_2\rangle_{\gamma_{02}^{(1)}}\Bigg.\Bigg) \nonumber \\
& & - ts\beta^2(1-\m_1)\mE_{G,{\mathcal U}_2}\langle \|\x^{(i_1)}\|_2\|\x^{(p_`)}\|_2\|\y^{(i_2)}\|_2\|\y^{(p_2)}\|_2 \rangle_{\gamma_{1}^{(1)}}. \nonumber \\
\end{eqnarray}

\subsubsection{$T_2$--group -- first level}
\label{sec:handlT2}

As in the previous subsection, each of the  $T_2$-group's three terms is handled separately.

\underline{\textbf{\emph{Determining}} $T_{2,1,j}^{\p}$}
\label{sec:hand1T21}

Gaussian integration by parts gives
\begin{eqnarray}\label{eq:genDanal19}
T_{2,1,j}^{\p}& = &  \mE_{G,{\mathcal U}_2}\lp
 \sum_{i_3=1}^{l}
 \frac{
\lp \mE_{{\mathcal U}_1} Z_{i_3}^{\m_1} \rp^{p-1}    }{  \lp
\sum_{i_3=1}^{l}
\lp \mE_{{\mathcal U}_1} Z_{i_3}^{\m_1} \rp^p \rp   }
\mE_{{\mathcal U}_1}\frac{(C_{i_3}^{(i_1)})^{s-1} A_{i_3}^{(i_1,i_2)} \y_j^{(i_2)}\u_j^{(2,2)}}{Z_{i_3}^{1-\m_1}} \rp \nonumber \\
 & = &
\mE_{G,{\mathcal U}_1}\lp
\sum_{i_3=1}^{l}
\mE_{{\mathcal U}_2}\lp\mE_{{\mathcal U}_2} (\u_j^{(2,2)}\u_j^{(2,2)})\frac{d}{d\u_j^{(2,2)}}\lp \frac{(C_{i_3}^{(i_1)})^{s-1} A_{i_3}^{(i_1,i_2)}\y_j^{(i_2)}}{Z_{i_3}^{1-\m_1} }
 \frac{
\lp \mE_{{\mathcal U}_1} Z_{i_3}^{\m_1} \rp^{p-1}    }{  \lp
\sum_{i_3=1}^{l}
\lp \mE_{{\mathcal U}_1} Z_{i_3}^{\m_1} \rp^p \rp   }
\rp\rp\rp \nonumber \\
& = &
\frac{\sqrt{\p_1}}{\p_1}
\Bigg( \Bigg.
\mE_{G,{\mathcal U}_2,{\mathcal U}_1}  \Bigg( \Bigg.
\sum_{i_3=1}^{l}
 \frac{
\lp \mE_{{\mathcal U}_1} Z_{i_3}^{\m_1} \rp^{p-1}    }{  \lp
\sum_{i_3=1}^{l}
\lp \mE_{{\mathcal U}_1} Z_{i_3}^{\m_1} \rp^p \rp   }
 \mE_{{\mathcal U}_2} (\sqrt{\p_1}\u_j^{(2,2)}\sqrt{\p_1}\u_j^{(2,2)})
 \nonumber \\
 & & \times
\frac{d}{d\lp \sqrt{\p_1}\u_j^{(2,2)}\rp}\lp \frac{(C_{i_3}^{(i_1)})^{s-1} A_{i_3}^{(i_1,i_2)}\y_j^{(i_2)}}{Z_{i_3}^{1-\m_1}}\rp
\Bigg .\Bigg )
\nonumber \\
& & + \mE_{G,{\mathcal U}_2,{\mathcal U}_1}   \Bigg. \Bigg(
\sum_{i_3=1}^{l}
\lp \mE_{{\mathcal U}_1} Z_{i_3}^{\m_1} \rp^{p-1}
\frac{\mE_{{\mathcal U}_2} (\sqrt{\p_1}\u_j^{(2,2)}\sqrt{\p_1}\u_j^{(2,2)})\lp(C_{i_3}^{(i_1)})^{s-1} A_{i_3}^{(i_1,i_2)}\y_j^{(i_2)} \rp}{Z_{i_3}^{1-\m_1}}
\nonumber \\
& & \times
\frac{d}{d\lp\sqrt{\p_1} \u_j^{(2,2)}\rp}\lp \frac{1}{   \lp
\sum_{i_3=1}^{l}
\lp \mE_{{\mathcal U}_1} Z_{i_3}^{\m_1} \rp^p \rp   }   \rp
  \Bigg. \Bigg)   \nonumber \\
& & + \mE_{G,{\mathcal U}_2,{\mathcal U}_1}   \Bigg. \Bigg(
\sum_{i_3=1}^{l}
 \frac{
1   }{  \lp
\sum_{i_3=1}^{l}
\lp \mE_{{\mathcal U}_1} Z_{i_3}^{\m_1} \rp^p \rp   }
\frac{\mE_{{\mathcal U}_2} (\sqrt{\p_1}\u_j^{(2,2)}\sqrt{\p_1}\u_j^{(2,2)})\lp(C_{i_3}^{(i_1)})^{s-1} A_{i_3}^{(i_1,i_2)}\y_j^{(i_2)} \rp}{Z_{i_3}^{1-\m_1}}
\nonumber \\
& & \times
\frac{d}{d\lp\sqrt{\p_1} \u_j^{(2,2)}\rp}\lp \mE_{{\mathcal U}_1} Z_{i_3}^{\m_1}  \rp^{p-1}
  \Bigg. \Bigg)   \Bigg. \Bigg) \nonumber \\
& = & \frac{\sqrt{\p_1}}{\p_1}T_{2,1,j},
\end{eqnarray}
where \cite{Stojnicnflgscompyx23}'s (49) gives
 \begin{eqnarray}\label{eq:genDanal25a01}
 \sum_{i_1=1}^{l}  \sum_{i_2=1}^{l} \sum_{j=1}^{m}  \beta_{i_1}T_{2,1,j}& = & \sqrt{1-t}\p_1\beta^2
 \Bigg(\Bigg. \mE_{G,{\mathcal U}_2}\langle \|\x^{(i_1)}\|_2^2\|\y^{(i_2)}\|_2^2\rangle_{\gamma_{01}^{(1)}} \nonumber \\
 & & +  (s-1)\mE_{G,{\mathcal U}_2}\langle \|\x^{(i_1)}\|_2^2(\y^{(p_2)})^T\y^{(i_2)}\rangle_{\gamma_{02}^{(1)}} \Bigg.\Bigg) \nonumber \\
& & - \sqrt{1-t}\p_1s\beta^2(1-\m_1)\mE_{G,{\mathcal U}_2}\langle \|\x^{(i_1)}\|_2\|\x^{(p_1)}\|_2(\y^{(p_2)})^T\y^{(i_2)} \rangle_{\gamma_{1}^{(1)}}\nonumber \\
 &   &
  -\sqrt{1-t}\p_1s\beta^2\m_1 p \mE_{G,{\mathcal U}_2} \langle \|\x^{(i_1)}\|_2\|\x^{(p_1)}\|_2(\y^{(p_2)})^T\y^{(i_2)} \rangle_{\gamma_{21}^{(1)}}
    \nonumber
  \\
   &   &
   +  \sqrt{1-t}s\beta^2\p_1\m_1 (p-1) \mE_{G,{\mathcal U}_2} \langle \|\x^{(i_1)}\|_2\|\x^{(p_1)}\|_2(\y^{(p_2)})^T\y^{(i_2)} \rangle_{\gamma_{22}^{(1)}}.
\end{eqnarray}
Combining (\ref{eq:genDanal19}) and (\ref{eq:genDanal25a01}) one then finds
\begin{eqnarray}\label{eq:genDanal25}
 \sum_{i_1=1}^{l}  \sum_{i_2=1}^{l} \sum_{j=1}^{m}  \beta_{i_1}\frac{\sqrt{1-t}}{\sqrt{\p_1}}T_{2,1,j}^{\p}& = & (1-t)\beta^2
 \Bigg(\Bigg. \mE_{G,{\mathcal U}_2}\langle \|\x^{(i_1)}\|_2^2\|\y^{(i_2)}\|_2^2\rangle_{\gamma_{01}^{(1)}} \nonumber \\
 & & +  (s-1)\mE_{G,{\mathcal U}_2}\langle \|\x^{(i_1)}\|_2^2(\y^{(p_2)})^T\y^{(i_2)}\rangle_{\gamma_{02}^{(1)}} \Bigg.\Bigg) \nonumber \\
& & - (1-t)s\beta^2(1-\m_1)\mE_{G,{\mathcal U}_2}\langle \|\x^{(i_1)}\|_2\|\x^{(p_1)}\|_2(\y^{(p_2)})^T\y^{(i_2)} \rangle_{\gamma_{1}^{(1)}}\nonumber \\
 &   &
  -(1-t)s\beta^2\m_1 p \mE_{G,{\mathcal U}_2} \langle \|\x^{(i_1)}\|_2\|\x^{(p_1)}\|_2(\y^{(p_2)})^T\y^{(i_2)} \rangle_{\gamma_{21}^{(1)}}
      \nonumber
  \\
   &   &
   +  (1-t) s\beta^2\m_1 (p-1) \mE_{G,{\mathcal U}_2} \langle \|\x^{(i_1)}\|_2\|\x^{(p_1)}\|_2(\y^{(p_2)})^T\y^{(i_2)} \rangle_{\gamma_{22}^{(1)}}.
\end{eqnarray}

\underline{\textbf{\emph{Determining}} $T_{2,2}^{\q}$}
\label{sec:hand1T22}

Another application of Gaussian integration by parts gives
{\small \begin{eqnarray}\label{eq:liftgenEanal20}
T_{2,2}^{\q} & = &  \mE_{G,{\mathcal U}_2} \lp
 \sum_{i_3=1}^{l}
 \frac{
\lp \mE_{{\mathcal U}_1} Z_{i_3}^{\m_1} \rp^{p-1}    }{  \lp
\sum_{i_3=1}^{l}
\lp \mE_{{\mathcal U}_1} Z_{i_3}^{\m_1} \rp^p \rp   }
\mE_{{\mathcal U}_1}\frac{(C^{(i_1)})^{s-1} A^{(i_1,i_2)} \u^{(i_1,3,2)}}{Z^{1-\m_1}} \rp \nonumber \\
 & = & \mE_{G,{\mathcal U}_1} \lp
\sum_{i_3=1}^{l}
\mE_{{\mathcal U}_2} \lp \sum_{p_1=1}^{l}\mE_{{\mathcal U}_2}(\u^{(i_1,3,2)}\u^{(p_1,3,2)}) \frac{d}{d\u^{(p_1,3,2)}}\lp\frac{(C^{(i_1)})^{s-1} A^{(i_1,i_2)}}{Z^{1-\m_1} }
 \frac{
\lp \mE_{{\mathcal U}_1} Z_{i_3}^{\m_1} \rp^{p-1}    }{  \lp
\sum_{i_3=1}^{l}
\lp \mE_{{\mathcal U}_1} Z_{i_3}^{\m_1} \rp^p \rp   }
\rp\rp\rp \nonumber \\
& = & \frac{\sqrt{\q_1}}{\q_1}
\Bigg( \Bigg.
\mE_{G,{\mathcal U}_2,{\mathcal U}_1} \Bigg (\Bigg .
  \sum_{i_3=1}^{l}
 \frac{
\lp \mE_{{\mathcal U}_1} Z_{i_3}^{\m_1} \rp^{p-1}    }{  \lp
\sum_{i_3=1}^{l}
\lp \mE_{{\mathcal U}_1} Z_{i_3}^{\m_1} \rp^p \rp   }
 \sum_{p_1=1}^{l}\mE_{{\mathcal U}_2}(\sqrt{\q_1}\u^{(i_1,3,2)}\sqrt{\q_1}\u^{(p_1,3,2)})
  \nonumber \\
  & & \times
  \frac{d}{d \lp \sqrt{\q_1}\u^{(p_1,3,2)}\rp}\lp\frac{(C^{(i_1)})^{s-1} A^{(i_1,i_2)}}{Z^{1-\m_1}}\rp
  \Bigg )\Bigg .
   \nonumber \\
& & + \mE_{G,{\mathcal U}_2,{\mathcal U}_1} \Bigg ( \Bigg .
  \sum_{i_3=1}^{l}
\lp \mE_{{\mathcal U}_1} Z_{i_3}^{\m_1} \rp^{p-1}
\frac{(C^{(i_1)})^{s-1} A^{(i_1,i_2)}}{Z^{1-\m_1}}
  \sum_{p_1=1}^{l}\mE_{{\mathcal U}_2}(\sqrt{\q_1}\u^{(i_1,3,2)}\sqrt{\q_1}\u^{(p_1,3,2)})
  \nonumber \\
  & & \times
      \frac{d}{d \lp\sqrt{\q_1}  \u^{(p_1,3,2)}\rp}\lp\frac{1}{   \lp
\sum_{i_3=1}^{l}
\lp \mE_{{\mathcal U}_1} Z_{i_3}^{\m_1} \rp^p \rp    }
      \rp\Bigg )\Bigg .  \Bigg. \Bigg) \nonumber \\
& & + \mE_{G,{\mathcal U}_2,{\mathcal U}_1} \Bigg ( \Bigg .
  \sum_{i_3=1}^{l}
 \frac{ 1  }{  \lp
\sum_{i_3=1}^{l}
\lp \mE_{{\mathcal U}_1} Z_{i_3}^{\m_1} \rp^p \rp   }
\frac{(C^{(i_1)})^{s-1} A^{(i_1,i_2)}}{Z^{1-\m_1}}
  \sum_{p_1=1}^{l}\mE_{{\mathcal U}_2}(\sqrt{\q_1}\u^{(i_1,3,2)}\sqrt{\q_1}\u^{(p_1,3,2)})
  \nonumber \\
  & & \times
      \frac{d}{d \lp\sqrt{\q_1}  \u^{(p_1,3,2)}\rp}\lp \mE_{{\mathcal U}_1} Z^{\m_1}   \rp^{p-1}  \Bigg )\Bigg .  \Bigg. \Bigg) \nonumber \\
  & = & \frac{\sqrt{\q_1}}{\q_1} T_{2,2},
 \end{eqnarray}}

\noindent where in \cite{Stojnicnflgscompyx23}'s (64) one finds
 \begin{eqnarray}\label{eq:genEanal25a01}
\sum_{i_1=1}^{l}\sum_{i_2=1}^{l} \beta_{i_1}\|\y^{(i_2)}\|_2 T_{2,2} &  = &
\sqrt{1-t}\q_1\beta^2 \Bigg( \Bigg. \mE_{G,{\mathcal U}_2}\langle \|\x^{(i_1)}\|_2^2\|\y^{(i_2)}\|_2^2\rangle_{\gamma_{01}^{(1)}} \nonumber \\
& & +  (s-1)\mE_{G,{\mathcal U}_2}\langle \|\x^{(i_1)}\|_2^2 \|\y^{(i_2)}\|_2\|\y^{(p_2)}\|_2\rangle_{\gamma_{02}^{(1)}}\Bigg.\Bigg) \nonumber \\
& & - \sqrt{1-t}\q_1s\beta^2(1-\m_1)\mE_{G,{\mathcal U}_2}\langle (\x^{(p_1)})^T\x^{(i_1)}\|\y^{(i_2)}\|_2\|\y^{(p_2)}\|_2 \rangle_{\gamma_{1}^{(1)}} \nonumber \\
&  & -\sqrt{1-t}\q_1s\beta^2\m_1 p \mE_{G,{\mathcal U}_2} \langle \|\y^{(i_2)}\|_2\|\y^{(p_2)}\|_2(\x^{(i_1)})^T\x^{(p_1)}\rangle_{\gamma_{21}^{(1)}}
\nonumber \\
& &
+ \sqrt{1-t}s\beta^2\q_1\m_1 (p-1) \mE_{G,{\mathcal U}_2} \langle \|\x^{(i_1)}\|_2\|\x^{(p_1)}\|_2(\y^{(p_2)})^T\y^{(i_2)} \rangle_{\gamma_{22}^{(1)}}.
\end{eqnarray}
A combination of (\ref{eq:liftgenEanal20}) and (\ref{eq:genEanal25a01}) then gives
  \begin{eqnarray}\label{eq:genEanal25}
\sum_{i_1=1}^{l}\sum_{i_2=1}^{l} \beta_{i_1}\|\y^{(i_2)}\|_2\frac{\sqrt{1-t}}{\sqrt{\q_1}}T_{2,2}^{\q} &  = &
(1-t)\beta^2 \Bigg( \Bigg. \mE_{G,{\mathcal U}_2}\langle \|\x^{(i_1)}\|_2^2\|\y^{(i_2)}\|_2^2\rangle_{\gamma_{01}^{(1)}} \nonumber \\
& & +  (s-1)\mE_{G,{\mathcal U}_2}\langle \|\x^{(i_1)}\|_2^2 \|\y^{(i_2)}\|_2\|\y^{(p_2)}\|_2\rangle_{\gamma_{02}^{(1)}}\Bigg.\Bigg) \nonumber \\
& & - (1-t)s\beta^2(1-\m_1)\mE_{G,{\mathcal U}_2}\langle (\x^{(p_1)})^T\x^{(i_1)}\|\y^{(i_2)}\|_2\|\y^{(p_2)}\|_2 \rangle_{\gamma_{1}^{(1)}} \nonumber \\
&  & -(1-t)s\beta^2\m_1 p \mE_{G,{\mathcal U}_2} \langle \|\y^{(i_2)}\|_2\|\y^{(p_2)}\|_2(\x^{(i_1)})^T\x^{(p_1)}\rangle_{\gamma_{21}^{(1)}}
\nonumber \\
& &
+ (1-t)s\beta^2\m_1 (p-1) \mE_{G,{\mathcal U}_2} \langle \|\x^{(i_1)}\|_2\|\x^{(p_1)}\|_2(\y^{(p_2)})^T\y^{(i_2)} \rangle_{\gamma_{22}^{(1)}}.
\nonumber \\
\end{eqnarray}

\underline{\textbf{\emph{Determining}} $T_{2,3}^{(\p,\q)}$}
\label{sec:hand1T23}

After Gaussian integration by parts we have
\begin{eqnarray}\label{eq:genFanal21}
T_{2,3}^{(\p,\q)} & = &  \mE_{G,{\mathcal U}_2}\lp
 \sum_{i_3=1}^{l}
 \frac{
\lp \mE_{{\mathcal U}_1} Z_{i_3}^{\m_1} \rp^{p-1}    }{  \lp
\sum_{i_3=1}^{l}
\lp \mE_{{\mathcal U}_1} Z_{i_3}^{\m_1} \rp^p \rp   }
\mE_{{\mathcal U}_1}\frac{(C^{(i_1)})^{s-1} A^{(i_1,i_2)} u^{(4,2)}}{Z^{1-\m_1}} \rp \nonumber \\
& = & \mE_{G,{\mathcal U}_1} \lp
  \sum_{i_3=1}^{l}
 \mE_{{\mathcal U}_2} \lp\mE_{{\mathcal U}_2} (u^{(4,2)}u^{(4,2)})\lp\frac{d}{du^{(4,2)}} \lp\frac{(C^{(i_1)})^{s-1} A^{(i_1,i_2)}}{Z^{1-\m_1} }
 \frac{
\lp \mE_{{\mathcal U}_1} Z_{i_3}^{\m_1} \rp^{p-1}    }{  \lp
\sum_{i_3=1}^{l}
\lp \mE_{{\mathcal U}_1} Z_{i_3}^{\m_1} \rp^p \rp   }
 \rp \rp\rp\rp \nonumber \\
& = & \frac{\sqrt{\p_1\q_1}}{\p_1\q_1}\mE_{G,{\mathcal U}_2,{\mathcal U}_1} \Bigg ( \Bigg .
 \sum_{i_3=1}^{l}
 \frac{
\lp \mE_{{\mathcal U}_1} Z_{i_3}^{\m_1} \rp^{p-1}    }{  \lp
\sum_{i_3=1}^{l}
\lp \mE_{{\mathcal U}_1} Z_{i_3}^{\m_1} \rp^p \rp   }
 \mE_{{\mathcal U}_2} (\sqrt{\p_1\q_1}u^{(4,2)}\sqrt{\p_1\q_1}u^{(4,2)})
 \nonumber \\
 & & \times
\lp\frac{d}{d \lp \sqrt{\p_1\q_1}u^{(4,2)}\rp} \lp\frac{(C^{(i_1)})^{s-1} A^{(i_1,i_2)}}{Z^{1-\m_1}}\rp\rp\Bigg )\Bigg . \nonumber \\
& & + \frac{\sqrt{\p_1\q_1}}{\p_1\q_1}
\times \mE_{G,{\mathcal U}_2,{\mathcal U}_1} \Bigg (\Bigg.
 \sum_{i_3=1}^{l}
\lp \mE_{{\mathcal U}_1} Z_{i_3}^{\m_1} \rp^{p-1}
\frac{\mE_{{\mathcal U}_2} (\sqrt{\p_1\q_1}u^{(4,2)}\sqrt{\p_1\q_1}u^{(4,2)})(C^{(i_1)})^{s-1} A^{(i_1,i_2)}}{Z^{1-\m_1}}
\nonumber \\
& & \times
\Bigg ( \Bigg. \frac{d}{d\lp \sqrt{\p_1\q_1} u^{(4,2)}\rp}
\Bigg ( \Bigg.
\frac{1}{   \lp
\sum_{i_3=1}^{l}
\lp \mE_{{\mathcal U}_1} Z_{i_3}^{\m_1} \rp^p \rp    }
 \Bigg ) \Bigg. \Bigg ) \Bigg. \Bigg ) \Bigg. \nonumber \\
& & + \frac{\sqrt{\p_1\q_1}}{\p_1\q_1}
\times \mE_{G,{\mathcal U}_2,{\mathcal U}_1} \Bigg (\Bigg.
 \sum_{i_3=1}^{l}
 \frac{
1  }{  \lp
\sum_{i_3=1}^{l}
\lp \mE_{{\mathcal U}_1} Z_{i_3}^{\m_1} \rp^p \rp   }
\frac{\mE_{{\mathcal U}_2} (\sqrt{\p_1\q_1}u^{(4,2)}\sqrt{\p_1\q_1}u^{(4,2)})(C^{(i_1)})^{s-1} A^{(i_1,i_2)}}{Z^{1-\m_1}}
\nonumber \\
& & \times
\Bigg ( \Bigg. \frac{d}{d\lp \sqrt{\p_1\q_1} u^{(4,2)}\rp}
\Bigg ( \Bigg.
 \lp \mE_{{\mathcal U}_1} Z_{i_3}^{\m_1} \rp^{p-1}
 \Bigg ) \Bigg. \Bigg ) \Bigg. \Bigg ) \Bigg. \nonumber \\
& = & \frac{\sqrt{\p_1\q_1}}{\p_1\q_1} T_{2,3},
\end{eqnarray}
where \cite{Stojnicnflldp25}'s (78) determined
 \begin{eqnarray}\label{eq:genFanal29a01}
\sum_{i_1=1}^{l}\sum_{i_2=1}^{l} \beta_{i_1}\|\y^{(i_2)}\|_2 T_{2,3}
& = &
\sqrt{t} \p_1\q_1\beta^2 \Bigg( \Bigg. \mE_{G,{\mathcal U}_2}\langle \|\x^{(i_1)}\|_2^2\|\y^{(i_2)}\|_2^2\rangle_{\gamma_{01}^{(1)}} \nonumber \\
& & +   (s-1)\mE_{G,{\mathcal U}_2}\langle \|\x^{(i_1)}\|_2^2 \|\y^{(i_2)}\|_2\|\y^{(p_2)}\|_2\rangle_{\gamma_{02}^{(1)}}\Bigg.\Bigg) \nonumber \\
& & - \sqrt{t}\p_1\q_1 s\beta^2(1-\m_1)\mE_{G,{\mathcal U}_2}\langle \|\x^{(i_1)}\|_2\|\x^{(p_`)}\|_2\|\y^{(i_2)}\|_2\|\y^{(p_2)}\|_2 \rangle_{\gamma_{1}^{(1)}} \nonumber \\
&  & -\sqrt{t}s\beta^2\p_1\q_1\m_1 p \mE_{G,{\mathcal U}_2} \langle\|\x^{(i_2)}\|_2\|\x^{(p_2)}\|_2\|\y^{(i_2)}\|_2\|\y^{(p_2)} \|_2 \rangle_{\gamma_{21}^{(1)}}
\nonumber \\
& & +
\sqrt{t} s\beta^2 \p_1 \q_1\m_1 (p-1) \mE_{G,{\mathcal U}_2} \langle \|\x^{(i_1)}\|_2\|\x^{(p_1)}\|_2  \|\y^{(p_2)}\|_2    \|\y^{(i_2)}\|_2 \rangle_{\gamma_{22}^{(1)}}.
\end{eqnarray}
A combination of (\ref{eq:genFanal21}) and (\ref{eq:genFanal29a01}) then gives
 \begin{eqnarray}\label{eq:genFanal29}
\sum_{i_1=1}^{l}\sum_{i_2=1}^{l} \beta_{i_1}\|\y^{(i_2)}\|_2\frac{\sqrt{t}}{\sqrt{\p_1\q_1}}T_{2,3}^{(\p,\q)}
& = &
t\beta^2 \Bigg( \Bigg. \mE_{G,{\mathcal U}_2}\langle \|\x^{(i_1)}\|_2^2\|\y^{(i_2)}\|_2^2\rangle_{\gamma_{01}^{(1)}} \nonumber \\
& & +   (s-1)\mE_{G,{\mathcal U}_2}\langle \|\x^{(i_1)}\|_2^2 \|\y^{(i_2)}\|_2\|\y^{(p_2)}\|_2\rangle_{\gamma_{02}^{(1)}}\Bigg.\Bigg) \nonumber \\
& & - t s\beta^2(1-\m_1)\mE_{G,{\mathcal U}_2}\langle \|\x^{(i_1)}\|_2\|\x^{(p_`)}\|_2\|\y^{(i_2)}\|_2\|\y^{(p_2)}\|_2 \rangle_{\gamma_{1}^{(1)}} \nonumber \\
&  & -ts\beta^2\m_1 p \mE_{G,{\mathcal U}_2} \langle\|\x^{(i_2)}\|_2\|\x^{(p_2)}\|_2\|\y^{(i_2)}\|_2\|\y^{(p_2)} \|_2 \rangle_{\gamma_{21}^{(1)}}
\nonumber \\
& & +
 t s\beta^2 \p_1 \q_1\m_1 (p-1) \mE_{G,{\mathcal U}_2} \langle \|\x^{(i_1)}\|_2\|\x^{(p_1)}\|_2  \|\y^{(p_2)}\|_2    \|\y^{(i_2)}\|_2 \rangle_{\gamma_{22}^{(1)}}. \nonumber \\
\end{eqnarray}

\subsubsection{Connecting everything together -- first level}
\label{sec:conalt}

To combine all the results obtained above, we first based on (\ref{eq:genanal10e}) and (\ref{eq:genanal10f})  write
\begin{eqnarray}\label{eq:ctp1}
\frac{d\psi(t)}{d\p_1}  & = &       \frac{\mbox{sign}(s)}{2 \sqrt{n}} \lp \Omega_1+\q_1\Omega_3\rp \nonumber \\
\frac{d\psi(t)}{d\q_1}  & = &       \frac{\mbox{sign}(s)}{2 \sqrt{n}} \lp \Omega_2+\p_1\Omega_3\rp,
\end{eqnarray}
where
\begin{eqnarray}\label{eq:ctp2}
\Omega_1 & = & \sum_{i_1=1}^{l}  \sum_{i_2=1}^{l} \sum_{j=1}^{m}\beta_{i_1}\frac{\sqrt{1-t}}{\sqrt{\p_1}}T_{2,1,j}^{\p}-\sum_{i_1=1}^{l}  \sum_{i_2=1}^{l} \sum_{j=1}^{m}\beta_{i_1}\frac{\sqrt{1-t}}{\sqrt{\p_0-\p_1}}T_{1,1,j}^{\p} \nonumber\\
\Omega_2 & = & \sum_{i_1=1}^{l}  \sum_{i_2=1}^{l}\beta_{i_1}\|\y^{(i_2)}\|_2\frac{\sqrt{1-t}}{\sqrt{\q_1}}T_{2,2}^{\q}-\sum_{i_1=1}^{l}  \sum_{i_2=1}^{l}\beta_{i_1}\|\y^{(i_2)}\|_2\frac{\sqrt{1-t}}{\q_0-\q_1}T_{1,2}^{\q} \nonumber\\
\Omega_3 & = & \sum_{i_1=1}^{l}  \sum_{i_2=1}^{l}\beta_{i_1}\|\y^{(i_2)}\|_2\frac{\sqrt{t}}{\sqrt{\p_1\q_1}}T_{2,3}^{(\p,\q)}- \sum_{i_1=1}^{l}  \sum_{i_2=1}^{l}\beta_{i_1}\|\y^{(i_2)}\|_2\frac{\sqrt{t}}{\p_0\q_0-\p_1\q_1}T_{1,3}^{(\p,\q)}.
\end{eqnarray}
We then from (\ref{eq:liftgenAanal19i}) and (\ref{eq:genDanal25}) find
\begin{eqnarray}\label{eq:cpt4}
-\Omega_1& = & (1-t)\beta^2 \lp \mE_{G,{\mathcal U}_2}\langle \|\x^{(i_1)}\|_2^2\|\y^{(i_2)}\|_2^2\rangle_{\gamma_{01}^{(1)}} +   (s-1)\mE_{G,{\mathcal U}_2}\langle \|\x^{(i_1)}\|_2^2(\y^{(p_2)})^T\y^{(i_2)}\rangle_{\gamma_{02}^{(1)}} \rp \nonumber \\
& & - (1-t)s\beta^2(1-\m_1)\mE_{G,{\mathcal U}_2}\langle \|\x^{(i_1)}\|_2\|\x^{(p_1)}\|_2(\y^{(p_2)})^T\y^{(i_2)} \rangle_{\gamma_{1}^{(1)}} \nonumber\\
& & -(1-t)\beta^2 \lp \mE_{G,{\mathcal U}_2}\langle \|\x^{(i_1)}\|_2^2\|\y^{(i_2)}\|_2^2\rangle_{\gamma_{01}^{(1)}} +   (s-1)\mE_{G,{\mathcal U}_2}\langle \|\x^{(i_1)}\|_2^2(\y^{(p_2)})^T\y^{(i_2)}\rangle_{\gamma_{02}^{(1)}} \rp \nonumber \\
& & +(1-t)s\beta^2(1-\m_1)\mE_{G,{\mathcal U}_2}\langle \|\x^{(i_1)}\|_2\|\x^{(p_1)}\|_2(\y^{(p_2)})^T\y^{(i_2)} \rangle_{\gamma_{1}^{(1)}}\nonumber \\
 &   &
  +(1-t)s\beta^2\m_1 p \mE_{G,{\mathcal U}_2} \langle \|\x^{(i_1)}\|_2\|\x^{(p_1)}\|_2(\y^{(p_2)})^T\y^{(i_2)} \rangle_{\gamma_{21}^{(1)}} \nonumber \\
 &   &
  -(1-t)s\beta^2\m_1 (p-1) \mE_{G,{\mathcal U}_2} \langle \|\x^{(i_1)}\|_2\|\x^{(p_1)}\|_2(\y^{(p_2)})^T\y^{(i_2)} \rangle_{\gamma_{22}^{(1)}} \nonumber \\
& = &
  (1-t)s\beta^2\m_1 p \mE_{G,{\mathcal U}_2} \langle \|\x^{(i_1)}\|_2\|\x^{(p_1)}\|_2(\y^{(p_2)})^T\y^{(i_2)} \rangle_{\gamma_{21}^{(1)}}
\nonumber \\
   &   &
  -(1-t)s\beta^2\m_1 (p-1) \mE_{G,{\mathcal U}_2} \langle \|\x^{(i_1)}\|_2\|\x^{(p_1)}\|_2(\y^{(p_2)})^T\y^{(i_2)} \rangle_{\gamma_{22}^{(1)}}.
\end{eqnarray}
From (\ref{eq:liftgenBanal20b}) and (\ref{eq:genEanal25}), we analogously find
\begin{eqnarray}\label{eq:cpt5}
-\Omega_2 & = & (1-t)\beta^2 \lp\mE_{G,{\mathcal U}_2}\langle \|\x^{(i_1)}\|_2^2\|\y^{(i_2)}\|_2^2\rangle_{\gamma_{01}^{(1)}} +   (s-1)\mE_{G,{\mathcal U}_2}\langle \|\x^{(i_1)}\|_2^2 \|\y^{(i_2)}\|_2\|\y^{(p_2)}\|_2\rangle_{\gamma_{02}^{(1)}}\rp\nonumber \\
& & - (1-t) s\beta^2(1-\m_1)\mE_{G,{\mathcal U}_2}\langle (\x^{(p_1)})^T\x^{(i_1)}\|\y^{(i_2)}\|_2\|\y^{(p_2)}\|_2 \rangle_{\gamma_{1}^{(1)}}\nonumber \\
&  & -
(1-t)\beta^2\lp\mE_{G,{\mathcal U}_2}\langle \|\x^{(i_1)}\|_2^2\|\y^{(i_2)}\|_2^2\rangle_{\gamma_{01}^{(1)}} +   (s-1)\mE_{G,{\mathcal U}_2}\langle \|\x^{(i_1)}\|_2^2 \|\y^{(i_2)}\|_2\|\y^{(p_2)}\|_2\rangle_{\gamma_{02}^{(1)}}\rp\nonumber \\
& & + (1-t) s\beta^2(1-\m_1)\mE_{G,{\mathcal U}_2}\langle (\x^{(p_1)})^T\x^{(i_1)}\|\y^{(i_2)}\|_2\|\y^{(p_2)}\|_2 \rangle_{\gamma_{1}^{(1)}} \nonumber \\
&  & + (1-t) s\beta^2 \m_1 p \mE_{G,{\mathcal U}_2} \langle \|\y^{(i_2)}\|_2\|\y^{(p_2)}\|_2(\x^{(i_1)})^T\x^{(p_1)}\rangle_{\gamma_{21}^{(1)}} \nonumber \\
&  & - (1-t) s\beta^2 \m_1 (p-1) \mE_{G,{\mathcal U}_2} \langle \|\y^{(i_2)}\|_2\|\y^{(p_2)}\|_2(\x^{(i_1)})^T\x^{(p_1)}\rangle_{\gamma_{22}^{(1)}} \nonumber \\
&  = &
   (1-t) s\beta^2\m_1 p\mE_{G,{\mathcal U}_2} \langle \|\y^{(i_2)}\|_2\|\y^{(p_2)}\|_2(\x^{(i_1)})^T\x^{(p_1)}\rangle_{\gamma_{21}^{(1)}}
   \nonumber \\
&  & - (1-t) s\beta^2 \m_1 (p-1) \mE_{G,{\mathcal U}_2} \langle \|\y^{(i_2)}\|_2\|\y^{(p_2)}\|_2(\x^{(i_1)})^T\x^{(p_1)}\rangle_{\gamma_{22}^{(1)}},
\end{eqnarray}
and from (\ref{eq:liftgenCanal21b}) and (\ref{eq:genFanal29})
  \begin{eqnarray}\label{eq:cpt6}
-\Omega_3 & = & t \beta^2 \lp \mE_{G,{\mathcal U}_2}\langle \|\x^{(i_1)}\|_2^2\|\y^{(i_2)}\|_2^2\rangle_{\gamma_{01}^{(1)}} +   (s-1)\mE_{G,{\mathcal U}_2}\langle \|\x^{(i_1)}\|_2^2 \|\y^{(i_2)}\|_2\|\y^{(p_2)}\|_2\rangle_{\gamma_{02}^{(1)}}\rp\nonumber \\
& & - ts\beta^2(1-\m_1)\mE_{G,{\mathcal U}_2}\langle \|\x^{(i_1)}\|_2\|\x^{(p_`)}\|_2\|\y^{(i_2)}\|_2\|\y^{(p_2)}\|_2 \rangle_{\gamma_{1}^{(1)}}\nonumber \\
&  & -
t\beta^2\lp\mE_{G,{\mathcal U}_2}\langle \|\x^{(i_1)}\|_2^2\|\y^{(i_2)}\|_2^2\rangle_{\gamma_{01}^{(1)}} +   (s-1)\mE_{G,{\mathcal U}_2}\langle \|\x^{(i_1)}\|_2^2 \|\y^{(i_2)}\|_2\|\y^{(p_2)}\|_2\rangle_{\gamma_{02}^{(1)}}\rp\nonumber \\
& & + t s\beta^2(1-\m_1)\mE_{G,{\mathcal U}_2}\langle \|\x^{(i_1)}\|_2\|\x^{(p_`)}\|_2\|\y^{(i_2)}\|_2\|\y^{(p_2)}\|_2 \rangle_{\gamma_{1}^{(1)}} \nonumber \\
&  & +ts\beta^2\m_1 p \mE_{G,{\mathcal U}_2} \mE_{G,{\mathcal U}_2}\langle\|\x^{(i_2)}\|_2\|\x^{(p_2)}\|_2\|\y^{(i_2)}\|_2\|\y^{(p_2)}    \|_2 \rangle_{\gamma_{21}^{(1)}} \nonumber \\
&  & -ts\beta^2\m_1 (p-1) \mE_{G,{\mathcal U}_2} \mE_{G,{\mathcal U}_2}\langle\|\x^{(i_2)}\|_2\|\x^{(p_2)}\|_2\|\y^{(i_2)}\|_2\|\y^{(p_2)}  \|_2 \rangle_{\gamma_{22}^{(1)}} \nonumber \\
& = &
 ts\beta^2
\m_1 p \mE_{G,{\mathcal U}_2} \langle\|\x^{(i_1)}\|_2\|\x^{(p_1)}\|_2\|\y^{(i_2)}\|_2\|\y^{(p_2)}   \|_2 \rangle_{\gamma_{21}^{(1)}}
\nonumber \\
&  & -ts\beta^2\m_1 (p-1) \mE_{G,{\mathcal U}_2} \mE_{G,{\mathcal U}_2}\langle\|\x^{(i_2)}\|_2\|\x^{(p_2)}\|_2\|\y^{(i_2)}\|_2\|\y^{(p_2)} \|_2 \rangle_{\gamma_{22}^{(1)}}.
\end{eqnarray}
Finally, a combination of  (\ref{eq:ctp1}) and (\ref{eq:cpt4})-(\ref{eq:cpt6}) gives
\begin{eqnarray}\label{eq:cpt7}
\frac{d\psi(t)}{d\p_1}  & = &       \frac{\mbox{sign}(s)\beta^2  }{2\sqrt{n}} \phi^{(1,\p)}\nonumber \\
\frac{d\psi(t)}{d\q_1}  & = &       \frac{\mbox{sign}(s)\beta^2  }{2\sqrt{n}} \phi^{(1,\q)},
 \end{eqnarray}
where
\begin{eqnarray}\label{eq:cpt8}
\phi^{(1,\p)} & = &
  -(1-t)s \m_1 p \mE_{G,{\mathcal U}_2} \langle \|\x^{(i_1)}\|_2\|\x^{(p_1)}\|_2(\y^{(p_2)})^T\y^{(i_2)} \rangle_{\gamma_{21}^{(1)}} \nonumber \\
& &   - ts \q_1
\m_1 p \mE_{G,{\mathcal U}_2} \langle\|\x^{(i_1)}\|_2\|\x^{(p_1)}\|_2\|\y^{(i_2)}\|_2\|\y^{(p_2)} \|_2 \rangle_{\gamma_{21}^{(1)}}\nonumber \\
&  &
  +(1-t)s \m_1 (p-1) \mE_{G,{\mathcal U}_2} \langle \|\x^{(i_1)}\|_2\|\x^{(p_1)}\|_2(\y^{(p_2)})^T\y^{(i_2)} \rangle_{\gamma_{22}^{(1)}} \nonumber \\
& &   + ts \q_1
\m_1 (p-1) \mE_{G,{\mathcal U}_2} \langle\|\x^{(i_1)}\|_2\|\x^{(p_1)}\|_2\|\y^{(i_2)}\|_2\|\y^{(p_2)} \|_2 \rangle_{\gamma_{22}^{(1)}}\nonumber \\
\phi^{(1,\q)} & = &
   -(1-t) s \m_1 p \mE_{G,{\mathcal U}_2} \langle \|\y^{(i_2)}\|_2\|\y^{(p_2)}\|_2(\x^{(i_1)})^T\x^{(p_1)}\rangle_{\gamma_{21}^{(1)}} \nonumber \\
& &   - ts \p_1
\m_1 p \mE_{G,{\mathcal U}_2} \langle\|\x^{(i_1)}\|_2\|\x^{(p_1)}\|_2\|\y^{(i_2)}\|_2\|\y^{(p_2)}\rangle_{\gamma_{21}^{(1)}}
\nonumber \\
&  &
   + (1-t) s \m_1 (p-1) \mE_{G,{\mathcal U}_2} \langle \|\y^{(i_2)}\|_2\|\y^{(p_2)}\|_2(\x^{(i_1)})^T\x^{(p_1)}\rangle_{\gamma_{22}^{(1)}} \nonumber \\
& &   + ts \p_1
\m_1 (p-1) \mE_{G,{\mathcal U}_2} \langle\|\x^{(i_1)}\|_2\|\x^{(p_1)}\|_2\|\y^{(i_2)}\|_2\|\y^{(p_2)}\rangle_{\gamma_{22}^{(1)}}.
\end{eqnarray}

The following proposition summarizes the above results.
\begin{proposition}
\label{thm:thm1} Let vectors $\m = [\m_1 ]$, $\p=[\p_0,\p_1,\p_2]$,  and $\q=[\q_0,\q_1,\q_2]$ be such that  $\p_0\geq \p_1\geq \p_2=0$ and $\q_0\geq \q_1\geq \q_2=0$. For $k\in\{1,2\}$  let the elements of $G\in\mR^{m \times n},u^{(4,k)}\in\mR^1,\u^{(2,k)}\in\mR^{m\times 1}$, and $\h^{(k)}\in\mR^{n\times 1}$ be independent standard normals. Set $a_k=\sqrt{\p_{k-1}\q_{k-1}-\p_k\q_k}$, $b_k=\sqrt{\p_{k-1}-\p_k}$, $c_k=\sqrt{\q_{k-1}-\q_k}$, and  ${\mathcal U}_k=[u^{(4,k)},\u^{(2,k)},\h^{(k)}]$. Let sets ${\mathcal X}=\{\x^{(1)},\x^{(2)},\dots,\x^{(l)}\}$, $\bar{{\mathcal X}}=\{\bar{\x}^{(1)},\bar{\x}^{(2)},\dots,\bar{\x}^{(l)}\}$, and ${\mathcal Y}=\{\y^{(1)},\y^{(2)},\dots,\y^{(l)}\}$  (with $\x^{(i)},\bar{\x}^{(i)}\in \mR^{n}$ and $\y^{(i)}\in \mR^{m}$), real scalars $\beta$, $p$, and $s$ (with $\beta,p\geq 0$), and function $f_{\bar{\x}^{(i_3)}} (\cdot): \mR^n\rightarrow \mR$ be given. One then has for function
\begin{equation}\label{eq:prop1eq1}
\psi( t)  =  \mE_{G,{\mathcal U}_2} \frac{1}{p|s|\sqrt{n}\m_1}
\log \lp  \sum_{i_3=1}^{l}  \lp  \mE_{{\mathcal U}_1} \lp \sum_{i_1=1}^{l}\lp\sum_{i_2=1}^{l}e^{\beta D_0^{(i_1,i_2,i_3 )}} \rp^{s}\rp^{\m_1} \rp^p \rp,
\end{equation}
with
\begin{eqnarray}\label{eq:prop1eq2}
 D_0^{(i_1,i_2,i_3)} & \triangleq & \sqrt{t}(\y^{(i_2)})^T
 G\x^{(i_1)}+\sqrt{1-t}\|\x^{(i_2)}\|_2 (\y^{(i_2)})^T(b_2\u^{(2,1)}+b_2\u^{(2,2)})\nonumber \\
 & & +\sqrt{t}\|\x^{(i_1)}\|_2\|\y^{(i_2)}\|_2(a_1u^{(4,1)}+a_2u^{(4,2)}) +\sqrt{1-t}\|\y^{(i_2)}\|_2(c_1\h^{(1)}+c_2\h^{(2)})^T\x^{(i)}
\nonumber \\
& &
   + f_{\bar{\x}^{(i_3)}  } (\x^{(i_1)}) ,
 \end{eqnarray}
that
\begin{eqnarray}\label{eq:prop1eq3}
\frac{d\psi(t)}{d\p_1}  & = &       \frac{\mbox{sign}(s)\beta^2}{2\sqrt{n}} \phi^{(1,\p)}\nonumber \\
\frac{d\psi(t)}{d\q_1}  & = &       \frac{\mbox{sign}(s)\beta^2}{2\sqrt{n}} \phi^{(1,\q)},
 \end{eqnarray}
where $\phi$'s are as in (\ref{eq:cpt8}).
\end{proposition}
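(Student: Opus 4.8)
The plan is to obtain both derivatives by direct differentiation of the explicit expression (\ref{eq:prop1eq1}) followed by Gaussian integration by parts, paralleling the non-stationarized treatment of \cite{Stojnicnflldp25}. First I would note that the only $\p_1$-dependence of $\psi$ enters through the coefficients $b_1,b_2,a_1,a_2$ in the exponent $D_0^{(i_1,i_2,i_3)}$, so differentiating through the nested $\log$/power/sum structure reduces everything to computing $dD^{(i_1,i_2,i_3)}/d\p_1$ and propagating it by the chain rule. Carrying this out produces (\ref{eq:genanal9}), and the exponent derivative (\ref{eq:genanal10b})--(\ref{eq:genanal10c}) splits into a $\bar T_2$ part (carrying the ${\mathcal U}_2$-variables $\u^{(2,2)},u^{(4,2)}$) and a $\bar T_1$ part (carrying the ${\mathcal U}_1$-variables $\u^{(2,1)},u^{(4,1)}$). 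This organizes $d\psi/d\p_1$ into the six building blocks of (\ref{eq:genanal10e})--(\ref{eq:genanal10g}), namely the $T_1$-group $\{T_{1,1,j}^{\p},T_{1,2}^{\q},T_{1,3}^{(\p,\q)}\}$ and the $T_2$-group $\{T_{2,1,j}^{\p},T_{2,2}^{\q},T_{2,3}^{(\p,\q)}\}$, with the $\q_1$-derivative organized identically.

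The engine for each block is the Gaussian-by-parts identity $\mE[g F]=\mE[dF/dg]$ for a standard normal $g$. For the $T_1$-group the pertinent normal lives only inside the inner $\mE_{{\mathcal U}_1}$ expectation, so after integration by parts and the rescalings by $\sqrt{\p_0-\p_1}$, $\sqrt{\q_0-\q_1}$, $\sqrt{\p_0\q_0-\p_1\q_1}$ the three terms match, up to explicit factors, quantities already evaluated in \cite{Stojnicnflgscompyx23,Stojnicnflldp25}; this yields (\ref{eq:liftgenAanal19i}), (\ref{eq:liftgenBanal20b}), and (\ref{eq:liftgenCanal21b}). These contributions involve only the measures $\gamma_{01}^{(1)},\gamma_{02}^{(1)},\gamma_{1}^{(1)}$ built from the operator $\Phi_{{\mathcal U}_1}^{(i_3)}$ of (\ref{eq:genAanal19e}).

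The hard part is the $T_2$-group, and I expect it to be the main obstacle. Here the relevant normal ($\u^{(2,2)}$, $\u^{(i_1,3,2)}$, or $u^{(4,2)}$) belongs to ${\mathcal U}_2$, over which the \emph{outer} expectation is taken, and it appears simultaneously inside $Z_{i_3}$, inside the factor $(\mE_{{\mathcal U}_1}Z_{i_3}^{\m_1})^{p-1}$, and inside the normalizing denominator $\sum_{i_3}(\mE_{{\mathcal U}_1}Z_{i_3}^{\m_1})^p$. Consequently integration by parts generates three separate contributions, one for the derivative hitting each location, as displayed in (\ref{eq:genDanal19}), (\ref{eq:liftgenEanal20}), and (\ref{eq:genFanal21}); after rescaling these match the $T_2$-evaluations of the reference papers and give (\ref{eq:genDanal25}), (\ref{eq:genEanal25}), (\ref{eq:genFanal29}). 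The price of differentiating the outer structure is the appearance of the two new measures $\gamma_{21}^{(1)}$ (the cross-$i_3$, $p$-replica coupling) and $\gamma_{22}^{(1)}$ (the diagonal-$i_3$ coupling), which are precisely the objects encoding the atypical/local-entropy content.

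Finally I would assemble the pieces via the bookkeeping variables $\Omega_1,\Omega_2,\Omega_3$ of (\ref{eq:ctp1})--(\ref{eq:ctp2}), so that $d\psi/d\p_1=\tfrac{\mbox{sign}(s)}{2\sqrt{n}}(\Omega_1+\q_1\Omega_3)$ and $d\psi/d\q_1=\tfrac{\mbox{sign}(s)}{2\sqrt{n}}(\Omega_2+\p_1\Omega_3)$. The decisive observation is that within each $\Omega_i$ the $T_2$-minus-$T_1$ difference cancels \emph{all} of the $\gamma_{01}^{(1)},\gamma_{02}^{(1)},\gamma_{1}^{(1)}$ contributions exactly, because once the common prefactor $(1-t)$ or $t$ is extracted the $T_1$- and $T_2$-evaluations share these three terms with opposite signs; what survives is only the $\gamma_{21}^{(1)}$ and $\gamma_{22}^{(1)}$ (i.e. $\m_1$-dependent) terms of (\ref{eq:cpt4})--(\ref{eq:cpt6}). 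Substituting back into (\ref{eq:ctp1}) and factoring out $\beta^2$ collects these survivors into exactly $\phi^{(1,\p)}$ and $\phi^{(1,\q)}$ of (\ref{eq:cpt8}), establishing (\ref{eq:prop1eq3}) and completing the proof.
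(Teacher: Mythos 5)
Your proposal is correct and follows essentially the same route as the paper: the same chain-rule reduction to the six $T_1$/$T_2$ blocks of (\ref{eq:genanal10e})--(\ref{eq:genanal10g1}), the same Gaussian integration-by-parts treatment (with the three-way split for the ${\mathcal U}_2$-variables producing $\gamma_{21}^{(1)}$ and $\gamma_{22}^{(1)}$), and the same final cancellation of the $\gamma_{01}^{(1)},\gamma_{02}^{(1)},\gamma_{1}^{(1)}$ contributions inside $\Omega_1,\Omega_2,\Omega_3$ leaving exactly $\phi^{(1,\p)}$ and $\phi^{(1,\q)}$ of (\ref{eq:cpt8}). This is precisely the ``above discussion'' that the paper's one-line proof of Proposition \ref{thm:thm1} refers to.
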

\begin{proof}
  Follows automatically from the above discussion.
\end{proof}

\section{$\p,\q$-derivatives  -- $r$-th level}
\label{sec:rthlev}

We now show how the results of Section \ref{sec:gencon}  extend to hold for an arbitrary lifting level, $r\in\mN$.  All conceptual ingredients needed for such an extension are already present in Section \ref{sec:gencon}. We below connect them so that the extension can be fully formalized.

For $r\geq 2$ let vectors $\m=[\m_1,\m_2,...,\m_r]$,
$\p=[\p_0,\p_1,...,\p_r,\p_{r+1}]$, and $\q=[\q_0,\q_1,\q_2,\dots,\q_r,\q_{r+1}]$ be such that $\p_0\geq \p_1\geq \p_2\geq \dots \geq \p_r\geq\p_{r+1} = 0$  and $\q_0\geq \q_1\geq \q_2\geq \dots \geq \q_r\geq \q_{r+1} = 0$. For any $k\in\{1,2,\dots,r+1\}$ let the components of $G\in\mR^{m \times n}$,   $u^{(4,k)}\in\mR^1,\u^{(2,k)}\in\mR^{m\times 1}$, and $\h^{(k)}\in\mR^{n\times 1}$ be independent standard normals and set $a_k=\sqrt{\p_{k-1}\q_{k-1}-\p_k\q_k}$, $b_k=\sqrt{\p_{k-1}-\p_k}$, $c_k=\sqrt{\q_{k-1}-\q_k}$ and ${\mathcal U}_k\triangleq [u^{(4,k)},\u^{(2,k)},\h^{(k)}]$.  Let sets ${\mathcal X}$, $\bar{{\mathcal X}}$, and ${\mathcal Y}$, scalars  $\beta$, $p$, and $s$, and function $f_{\bar{\x}^{(i_3)} } (\cdot)$ be as in Proposition \ref{thm:thm1}. Our focus is then the following function
\begin{equation}\label{eq:rthlev2genanal3}
\psi(t)  =  \mE_{G,{\mathcal U}_{r+1}} \frac{1}{p|s|\sqrt{n}\m_r} \log
\lp \mE_{{\mathcal U}_{r}} \lp \dots \lp \mE_{{\mathcal U}_2}\lp\lp \sum_{i_3=1}^{l} \lp \mE_{{\mathcal U}_1}  Z_{i_3}^{\m_1}\rp^p \rp^{\frac{\m_2}{\m_1}}\rp\rp^{\frac{\m_3}{\m_2}} \dots \rp^{\frac{\m_{r}}{\m_{r-1}}}\rp,
\end{equation}
where
\begin{eqnarray}\label{eq:rthlev2genanal3a}
Z_{i_3} & \triangleq & \sum_{i_1=1}^{l}\lp\sum_{i_2=1}^{l}e^{\beta D_0^{(i_1,i_2,i_3)}} \rp^{s} \nonumber \\
 D_0^{(i_1,i_2,i_3)} & \triangleq & \sqrt{t}(\y^{(i_2)})^T
 G\x^{(i_1)}+\sqrt{1-t}\|\x^{(i_1)}\|_2 (\y^{(i_2)})^T\lp\sum_{k=1}^{r+1}b_k\u^{(2,k)}\rp\nonumber \\
 & & +\sqrt{t}\|\x^{(i_1)}\|_2\|\y^{(i_2)}\|_2\lp\sum_{k=1}^{r+1}a_ku^{(4,k)}\rp +\sqrt{1-t}\|\y^{(i_2)}\|_2\lp\sum_{k=1}^{r+1}c_k\h^{(k)}\rp^T\x^{(i_1)}
+ f_{\bar{\x}^{(i_3 )} } ( \x^{(i_1)} ). \nonumber \\
 \end{eqnarray}
Analogously to (\ref{eq:genanal4}), we set
\begin{eqnarray}\label{eq:rthlev2genanal4}
\u^{(i_1,1)} & =  & \frac{G\x^{(i_1)}}{\|\x^{(i_1)}\|_2} \nonumber \\
\u^{(i_1,3,k)} & =  & \frac{(\h^{(k)})^T\x^{(i_1)}}{\|\x^{(i_1)}\|_2},
\end{eqnarray}
recall on
\begin{eqnarray}\label{eq:rthlev2genanal5}
\bar{\u}_j^{(i_1,1)} & =  & \frac{G_{j,1:n}\x^{(i_1)}}{\|\x^{(i_1)}\|_2},1\leq j\leq m.
\end{eqnarray}
and note that the elements of $\u^{(i_1,1)}$, $\u^{(2,k)}$, and $\u^{(i_1,3,k)}$ are standard normals. This allows to rewrite (\ref{eq:rthlev2genanal3}) as
\begin{equation}\label{eq:rthlev2genanal6}
\psi(t)  =  \mE_{G,{\mathcal U}_{r+1}} \frac{1}{p|s|\sqrt{n}\m_r} \log
\lp \mE_{{\mathcal U}_{r}} \lp \dots \lp \mE_{{\mathcal U}_2}\lp\lp  \sum_{i_3=1}^{l} \lp \mE_{{\mathcal U}_1} Z_{i_3}^{\m_1}\rp^p \rp^{\frac{\m_2}{\m_1}}\rp\rp^{\frac{\m_3}{\m_2}} \dots \rp^{\frac{\m_{r}}{\m_{r-1}}}\rp,
\end{equation}
where $\beta_{i_1}=\beta\|\x^{(i_1)}\|_2$ and now
\begin{eqnarray}\label{eq:rthlev2genanal7}
B^{(i_1,i_2)} & \triangleq &  \sqrt{t}(\y^{(i_2)})^T\bar{\u}^{(i_1,1)}+\sqrt{1-t} (\y^{(i_2)})^T\lp \sum_{k=1}^{r+1}b_k\u^{(2,k)} \rp \nonumber \\
D^{(i_1,i_2,i_3)} & \triangleq &  B^{(i_1,i_2)}+\sqrt{t}\|\y^{(i_2)}\|_2 \lp \sum_{k=1}^{r+1}a_ku^{(4,k)}\rp+\sqrt{1-t}\|\y^{(i_2)}\|_2 \lp \sum_{k=1}^{r+1}c_k\u^{(i_1,3,k)}  \rp
+ f_{\bar{\x}^{(i_3 )} } ( \x^{(i_1)} )  \nonumber \\
A_{i_3}^{(i_1,i_2)} & \triangleq &  e^{\beta_{i_1}D^{(i_1,i_2,i_3)}}\nonumber \\
C_{i_3}^{(i_1)} & \triangleq &  \sum_{i_2=1}^{l}A_{i_3}^{(i_1,i_2)}\nonumber \\
Z_{i_3} & \triangleq & \sum_{i_1=1}^{l} \lp \sum_{i_2=1}^{l} A_{i_3}^{(i_1,i_2)}\rp^s =\sum_{i_1=1}^{l}  (C_{i_3}^{(i_1)})^s.
\end{eqnarray}
We set $\m_0=1$ and analogously to \cite{Stojnicnflldp25}'s (230).
\begin{eqnarray}\label{eq:rthlev2genanal7a}
\zeta_r\triangleq \mE_{{\mathcal U}_{r}} \lp \dots \lp \mE_{{\mathcal U}_2}\lp\lp  \sum_{i_3=1}^{l} \lp \mE_{{\mathcal U}_1}  Z_{i_3}^{\frac{\m_1}{\m_0}}\rp^p \rp^{\frac{\m_2}{\m_1}}\rp\rp^{\frac{\m_3}{\m_2}} \dots \rp^{\frac{\m_{r}}{\m_{r-1}}}, r\geq 2.
\end{eqnarray}
From \cite{Stojnicnflldp25}'s (231) one then also has
\begin{eqnarray}\label{eq:rthlev2genanal7b}
\zeta_k = \mE_{{\mathcal U}_{k}} \lp  \zeta_{k-1} \rp^{\frac{\m_{k}}{\m_{k-1}}}, k\geq 2,\quad \mbox{and} \quad
\zeta_1=  \sum_{i_3=1}^{l} \lp \mE_{{\mathcal U}_1}  Z_{i_3}^{\frac{\m_1}{\m_0}}\rp^p.
\end{eqnarray}
Relying on \cite{Stojnicsflgscompyx23}'s (64), we then have for the derivative
\begin{eqnarray}\label{eq:rthlev2genanal9}
\frac{d\psi( t)}{d\p_{k_1}}
 & = &  \mE_{G,{\mathcal U}_{r+1}} \frac{1}{p |s|\sqrt{n}\m_1\zeta_r}
\prod_{k=r}^{2}\mE_{{\mathcal U}_{k}} \zeta_{k-1}^{\frac{\m_k}{\m_{k-1}}-1}
 \frac{d\zeta_{1}}{d\p_{k_1}} \nonumber \\
& = &  \mE_{G,{\mathcal U}_{r+1}} \frac{1}{ p |s|\sqrt{n}\m_1\zeta_r}
\prod_{k=r}^{2}\mE_{{\mathcal U}_{k}} \zeta_{k-1}^{\frac{\m_k}{\m_{k-1}}-1}
\sum_{i_3=1}^{l} \lp \mE_{{\mathcal U}_1} Z_{i_3}^{\m_1}  \rp^{p-1}
 \frac{d \mE_{{\mathcal U}_1} Z_{i_3}^{\m_1} }{d\p_{k_1}} \nonumber \\
 & = &
\mE_{G,{\mathcal U}_{r+1}} \frac{1}{ |s|\sqrt{n}\zeta_r}
\prod_{k=r}^{2}\mE_{{\mathcal U}_{k}} \zeta_{k-1}^{\frac{\m_k}{\m_{k-1}}-1}
\sum_{i_3=1}^{l} \lp \mE_{{\mathcal U}_1} Z_{i_3}^{\m_1}  \rp^{p-1}
 \mE_{{\mathcal U}_1} \frac{1}{Z_{i_3}^{1-\m_1}}  \sum_{i=1}^{l} (C_{i_3}^{(i_1)})^{s-1} \nonumber \\
& & \times \sum_{i_2=1}^{l}\beta_{i_1}A_{i_3}^{(i_1,i_2)}\frac{dD^{(i_1,i_2,i_3)}}{d\p_{k_1}},
\end{eqnarray}
where we note that the product is running in an \emph{index decreasing order}. Moreover, from \cite{Stojnicsflgscompyx23}'s (65)-(66) we also have
\begin{eqnarray}\label{eq:rthlev2genanal9a}
\frac{dD^{(i_1,i_2,i_3)}}{d\p_{k_1}} & = & \lp \frac{dB^{(i_1,i_2)}}{d\p_{k_1}}+
\sqrt{t} \frac{d \lp\|\y^{(i_2)}\|_2 (\sum_{k=k_1}^{k_1+1} a_ku^{(4,k)}) \rp}{d\p_{k_1}}\rp \nonumber \\
& = & \lp \frac{dB^{(i_1,i_2)}}{d\p_{k_1}}
-\sqrt{t} \q_{k_1}\frac{\|\y^{(i_2)}\|_2 u^{(4,k_1)}}{2\sqrt{\p_{k_1-1}\q_{k_1-1}-\p_{k_1}\q_{k_1}}}+\sqrt{t} \q_{k_1}\frac{\|\y^{(i_2)}\|_2 u^{(4,k_1+1)}}{2\sqrt{\p_{k_1}\q_{k_1}-\p_{k_1+1}\q_{k_1+1}}}\rp,
\end{eqnarray}
and
\begin{eqnarray}\label{eq:rthlev2genanal10}
\frac{dB^{(i_1,i_2)}}{d\p_{k_1}} & = &   \frac{d\lp\sqrt{t}(\y^{(i_2)})^T\u^{(i_1,1)}+\sqrt{1-t} (\y^{(i_2)})^T(\sum_{k=1}^{r+1} b_k\u^{(2,k)})\rp}{d\p_{k_1}} \nonumber \\
& = &   \frac{d\lp\sqrt{1-t} (\y^{(i_2)})^T(\sum_{k=k_1}^{k_1+1} b_k\u^{(2,k)})\rp}{d\p_{k_1}} \nonumber \\
 & = &
-\sqrt{1-t}\sum_{j=1}^{m}\lp  \frac{\y_j^{(i_2)} \u_j^{(2,k_1)}}{2\sqrt{\p_{k_1-1}-\p_{k_1}}}\rp
+\sqrt{1-t}\sum_{j=1}^{m}\lp  \frac{\y_j^{(i_2)} \u_j^{(2,k_1+1)}}{2\sqrt{\p_{k_1}-\p_{k_1+1}}}\rp.
\end{eqnarray}
This then allows to write analogously to (\ref{eq:genanal10e})
\begin{equation}\label{eq:rthlev2genanal10e}
\frac{d\psi(t)}{d\p_{k_1}}  =       \frac{\mbox{sign}(s)}{2 \sqrt{n}} \sum_{i_1=1}^{l}  \sum_{i_2=1}^{l}
\beta_{i_1}\lp T_{k_1+1}^{\p}-T_{k_1}^{\p}\rp,
\end{equation}
where
\begin{eqnarray}\label{eq:rthlev2genanal10f}
 T_k^{\p} & = & \frac{\sqrt{1-t}}{\sqrt{\p_{k-1}-\p_{k}}}\sum_{j=1}^{m}T_{k,1,j}^{\p}
 +\frac{\sqrt{t}\q_{k_1}\|\y^{(i_2)}\|_2}{\sqrt{\p_{k-1}\q_{k-1}-\p_{k}\q_{k}}}T_{k,3}^{(\p,\q)}, k\in\{k_1,k_1+1\},
 \end{eqnarray}
 and
 \begin{eqnarray}\label{eq:rthlev2genanal10g}
 T_{k,1,j}^{\p} & = &   \mE_{G,{\mathcal U}_{r+1}} \lp
\zeta_r^{-1}\prod_{v=r}^{2}\mE_{{\mathcal U}_{v}} \zeta_{v-1}^{\frac{\m_v}{\m_{v-1}}-1}
\sum_{i_3=1}^{l} \lp \mE_{{\mathcal U}_1} Z_{i_3}^{\m_1}  \rp^{p-1}
  \mE_{{\mathcal U}_1}\frac{(C_{i_3}^{(i_1)})^{s-1} A_{i_3}^{(i_1,i_2)} \y_j^{(i_2)}\u_j^{(2,k)}}{Z_{i_3}^{1-\m_1}} \rp \nonumber \\
T_{k,3}^{(\p,\q)} & = &  \mE_{G,{\mathcal U}_{r+1}} \lp
\zeta_r^{-1}\prod_{v=r}^{2}\mE_{{\mathcal U}_{v}} \zeta_{v-1}^{\frac{\m_v}{\m_{v-1}}-1}
\sum_{i_3=1}^{l} \lp \mE_{{\mathcal U}_1} Z_{i_3}^{\m_1}  \rp^{p-1}
  \mE_{{\mathcal U}_1}\frac{(C_{i_3}^{(i_1)})^{s-1} A_{i_3}^{(i_1,i_2)} u^{(4,k)}}{Z_{i_3}^{1-\m_1}} \rp.
\end{eqnarray}
Making the analogy with the first level of lifting we immediately write  analogously to (\ref{eq:rthlev2genanal10e})-(\ref{eq:rthlev2genanal10g})
\begin{equation}\label{eq:rthlev2genanal10e1}
\frac{d\psi(t)}{d\q_{k_1}}  =       \frac{\mbox{sign}(s)}{2 \sqrt{n}} \sum_{i_1=1}^{l}  \sum_{i_2=1}^{l}
\beta_{i_1}\lp T_{k_1+1}^{\q}-T_{k_1}^{\q}\rp,
\end{equation}
where
\begin{eqnarray}\label{eq:rthlev2genanal10f1}
 T_k^{\q} & = & \frac{\sqrt{1-t}}{\sqrt{\q_{k-1}-\q_{k}}} \|\y^{(i_2)}\|_2 T_{k,2}^{\q}
 +\frac{\sqrt{t}\p_{k_1}\|\y^{(i_2)}\|_2}{\sqrt{\p_{k-1}\q_{k-1}-\p_{k}\q_{k}}}T_{k,3}^{(\p,\q)}, k\in\{k_1,k_1+1\},
 \end{eqnarray}
 and
 \begin{eqnarray}\label{eq:rthlev2genanal10g1}
T_{k,2}^{\p} & = &   \mE_{G,{\mathcal U}_{r+1}} \lp
\zeta_r^{-1}\prod_{v=r}^{2}\mE_{{\mathcal U}_{v}} \zeta_{v-1}^{\frac{\m_v}{\m_{v-1}}-1}
\sum_{i_3=1}^{l} \lp \mE_{{\mathcal U}_1} Z_{i_3}^{\m_1}  \rp^{p-1}
  \mE_{{\mathcal U}_1}\frac{(C_{i_3}^{(i_1)})^{s-1} A_{i_3}^{(i_1,i_2)} \u^{(i_1,3,k)}}{Z_{i_3}^{1-\m_1}} \rp \nonumber \\
T_{k,3}^{(\p,\q)} & = &  \mE_{G,{\mathcal U}_{r+1}} \lp
\zeta_r^{-1}\prod_{v=r}^{2}\mE_{{\mathcal U}_{v}} \zeta_{v-1}^{\frac{\m_v}{\m_{v-1}}-1}
\sum_{i_3=1}^{l} \lp \mE_{{\mathcal U}_1} Z_{i_3}^{\m_1}  \rp^{p-1}
  \mE_{{\mathcal U}_1}\frac{(C_{i_3}^{(i_1)})^{s-1} A_{i_3}^{(i_1,i_2)} u^{(4,k)}}{Z_{i_3}^{1-\m_1}} \rp.
\end{eqnarray}
Overall, there are  three sequences that need to be determined $\lp T_{k,1,j}^{\p}\rp_{k=1:r+1}$, $\lp T_{k,2}^{\q}\rp_{k=1:r+1}$, and $\lp T_{k,3}^{(\p,\q)}\rp_{k=1:r+1}$. However, in an analogy with what was observed in \cite{Stojnicnflgscompyx23} and \cite{Stojnicnflldp25}, the components within each of these sequences are connected in identical way. This allows to handle one of them, say, $\lp T_{k,1,j}^{\p}\rp_{k=1:r+1}$, and then automatically deduce the final results for the remaining two.

\subsection{Scaling and canceling out}
\label{sec:scaledcanout}

The way two successive sequence elements, $T_{k_1,1,j}$ and $T_{k_1+1,1,j}$ (for $k_1\geq 2$) are related to each other is shown below. Similarly to what was observed in \cite{Stojnicnflgscompyx23,Stojnicsflgscompyx23,Stojnicnflldp25}, there are two key components of the recursive connection: 1) \emph{successive scaling}; and 2) appearance of \emph{reweightedly averaged  new terms}.

First we find via Gaussian integration by parts
 \begin{eqnarray}\label{eq:rthlev2genanal11}
 T_{k_1,1,j}^{\p} \hspace{-.05in}  & = &   \mE_{G,{\mathcal U}_{r+1}} \lp
\zeta_r^{-1}\prod_{v=r}^{2}\mE_{{\mathcal U}_{v}} \zeta_{v-1}^{\frac{\m_v}{\m_{v-1}}-1}
\sum_{i_3=1}^{l} \lp \mE_{{\mathcal U}_1} Z_{i_3}^{\m_1}  \rp^{p-1}
  \mE_{{\mathcal U}_1}\frac{(C_{i_3}^{(i_1)})^{s-1} A_{i_3}^{(i_1,i_2)} \y_j^{(i_2)}\u_j^{(2,k_1)}}{Z_{i_3}^{1-\m_1}} \rp \nonumber \\
 & = &   \mE_{G,{\mathcal U}_{r+1}} \Bigg ( \Bigg .
\zeta_r^{-1}\prod_{v=r}^{k_1+1}\mE_{{\mathcal U}_{v}} \zeta_{v-1}^{\frac{\m_v}{\m_{v-1}}-1}
\prod_{v=k_1}^{2}\mE_{{\mathcal U}_{v}} \zeta_{v-1}^{\frac{\m_v}{\m_{v-1}}-1}
\sum_{i_3=1}^{l} \lp \mE_{{\mathcal U}_1} Z_{i_3}^{\m_1}  \rp^{p-1}
\nonumber \\
& & \times
  \mE_{{\mathcal U}_1}\frac{(C_{i_3}^{(i_1)})^{s-1} A_{i_3}^{(i_1,i_2)} \y_j^{(i_2)}\u_j^{(2,k_1)}}{Z_{i_3}^{1-\m_1}} \Bigg ) \Bigg.
  \nonumber \\
    & = &   \mE_{G,{\mathcal U}_{r+1}} \Bigg(\Bigg.
\zeta_r^{-1}\prod_{v=r}^{k_1+1}\mE_{{\mathcal U}_{v}} \zeta_{v-1}^{\frac{\m_v}{\m_{v-1}}-1}
\mE_{{\mathcal U}_{k_1}} \mE_{{\mathcal U}_{k_1}} \lp \u_j^{(2,k_1)}\u_j^{(2,k_1)}\rp \nonumber \\
& & \times
\frac{d}{d\u_j^{(2,k_1)}} \lp \zeta_{k_1-1}^{\frac{\m_{k_1}}{\m_{k_1-1}}-1}\prod_{v=k_1-1}^{2}\mE_{{\mathcal U}_{v}} \zeta_{v-1}^{\frac{\m_v}{\m_{v-1}}-1}
  \sum_{i_3=1}^{l} \lp \mE_{{\mathcal U}_1} Z_{i_3}^{\m_1}  \rp^{p-1}
  \mE_{{\mathcal U}_1}\frac{(C_{i_3}^{(i_1)})^{s-1} A_{i_3}^{(i_1,i_2)} \y_j^{(i_2)}\u_j^{(2,k_1)}}{Z_{i_3}^{1-\m_1}} \rp \Bigg.\Bigg)
  \nonumber \\
    & = &  \frac{\sqrt{\p_{k_1-1}-\p_{k_1}}}{\p_{k_1-1}-\p_{k_1}} \nonumber \\
    & & \times \mE_{G,{\mathcal U}_{r+1}} \Bigg(\Bigg.
\zeta_r^{-1}\prod_{v=r}^{k_1+1}\mE_{{\mathcal U}_{v}} \zeta_{v-1}^{\frac{\m_v}{\m_{v-1}}-1}
\mE_{{\mathcal U}_{k_1}} \mE_{{\mathcal U}_{k_1}} \lp \sqrt{\p_{k_1-1}-\p_{k_1}}\u_j^{(2,k_1)}\sqrt{\p_{k_1-1}-\p_{k_1}}\u_j^{(2,k_1)}\rp \nonumber \\
& & \times
\frac{d}{d \lp \sqrt{\p_{k_1-1}-\p_{k_1}} \u_j^{(2,k_1)} \rp}
\Bigg (\Bigg .
\zeta_{k_1-1}^{\frac{\m_{k_1}}{\m_{k_1-1}}-1}\prod_{v=k_1-1}^{2}\mE_{{\mathcal U}_{v}} \zeta_{v-1}^{\frac{\m_v}{\m_{v-1}}-1}
\sum_{i_3=1}^{l} \lp \mE_{{\mathcal U}_1} Z_{i_3}^{\m_1}  \rp^{p-1}
\nonumber \\
& & \times
  \mE_{{\mathcal U}_1}\frac{(C_{i_3}^{(i_1)})^{s-1} A_{i_3}^{(i_1,i_2)} \y_j^{(i_2)}\u_j^{(2,k_1)}}{Z_{i_3}^{1-\m_1}}
  \Bigg )\Bigg.
  \Bigg.\Bigg) \nonumber \\
   & = &  \frac{\sqrt{\p_{k_1-1}-\p_{k_1}}}{\p_{k_1-1}-\p_{k_1}}  T_{k_1,1,j},
\end{eqnarray}
where \cite{Stojnicnflldp25} determined $T_{k_1,1,j}$.

Also,
 \begin{eqnarray}\label{eq:rthlev2genanal12}
 T_{k_1+1,1,j}^{\p}
  & = &   \mE_{G,{\mathcal U}_{r+1}} \Bigg (\Bigg.
\zeta_r^{-1}\prod_{v=r}^{k_1+2}\mE_{{\mathcal U}_{v}} \zeta_{v-1}^{\frac{\m_v}{\m_{v-1}}-1}
\prod_{v=k_1+1}^{2}\mE_{{\mathcal U}_{v}} \zeta_{v-1}^{\frac{\m_v}{\m_{v-1}}-1}
\sum_{i_3=1}^{l} \lp \mE_{{\mathcal U}_1} Z_{i_3}^{\m_1}  \rp^{p-1}
\nonumber \\
& & \times
  \mE_{{\mathcal U}_1}\frac{(C^{(i_1)})^{s-1} A^{(i_1,i_2)} \y_j^{(i_2)}\u_j^{(2,k_1+1)}}{Z^{1-\m_1}} \Bigg ) \Bigg.
   \nonumber \\
    & = &   \mE_{G,{\mathcal U}_{r+1}} \Bigg( \Bigg.
\zeta_r^{-1}\prod_{v=r}^{k_1+2}\mE_{{\mathcal U}_{v}} \zeta_{v-1}^{\frac{\m_v}{\m_{v-1}}-1}
\mE_{{\mathcal U}_{k_1+1}}   \nonumber \\
& & \times  \zeta_{k_1}^{\frac{\m_{k_1+1}}{\m_{k_1}}-1}
\prod_{v=k_1}^{2}\mE_{{\mathcal U}_{v}} \zeta_{v-1}^{\frac{\m_v}{\m_{v-1}}-1}
\sum_{i_3=1}^{l} \lp \mE_{{\mathcal U}_1} Z_{i_3}^{\m_1}  \rp^{p-1}
  \mE_{{\mathcal U}_1}\frac{(C^{(i_1)})^{s-1} A^{(i_1,i_2)} \y_j^{(i_2)}\u_j^{(2,k_1+1)}}{Z^{1-\m_1}} \Bigg. \Bigg) \nonumber \\
      & = &   \mE_{G,{\mathcal U}_{r+1}} \Bigg( \Bigg.
\zeta_r^{-1}\prod_{v=r}^{k_1+2}\mE_{{\mathcal U}_{v}} \zeta_{v-1}^{\frac{\m_v}{\m_{v-1}}-1}
\mE_{{\mathcal U}_{k_1+1}} \mE_{{\mathcal U}_{k_1+1}} (\u_j^{(2,k_1+1)}\u_j^{(2,k_1+1)}) \nonumber \\
& & \times  \frac{d}{d\u_j^{(2,k_1+1)}}  \lp \zeta_{k_1}^{\frac{\m_{k_1+1}}{\m_{k_1}}-1}
\prod_{v=k_1}^{2}\mE_{{\mathcal U}_{v}} \zeta_{v-1}^{\frac{\m_v}{\m_{v-1}}-1}
\sum_{i_3=1}^{l} \lp \mE_{{\mathcal U}_1} Z_{i_3}^{\m_1}  \rp^{p-1}
  \mE_{{\mathcal U}_1}\frac{(C^{(i_1)})^{s-1} A^{(i_1,i_2)} \y_j^{(i_2)}}{Z^{1-\m_1}} \rp   \Bigg. \Bigg)
   \nonumber \\
      & = &  \frac{\sqrt{\p_{k_1} -\p_{k_1+1}}
}{\p_{k_1} -\p_{k_1+1}} \nonumber \\
& & \times \mE_{G,{\mathcal U}_{r+1}} \Bigg( \Bigg.
\zeta_r^{-1}\prod_{v=r}^{k_1+2}\mE_{{\mathcal U}_{v}} \zeta_{v-1}^{\frac{\m_v}{\m_{v-1}}-1}
\nonumber \\
& & \times
\mE_{{\mathcal U}_{k_1+1}} \mE_{{\mathcal U}_{k_1+1}} (\sqrt{\p_{k_1} -\p_{k_1+1}}
\u_j^{(2,k_1+1)}\sqrt{\p_{k_1} -\p_{k_1+1}}
\u_j^{(2,k_1+1)}) \nonumber \\
& & \times  \frac{d}{d \lp \sqrt{\p_{k_1} -\p_{k_1+1}}
 \u_j^{(2,k_1+1)}\rp}
 \Bigg ( \Bigg.
 \zeta_{k_1}^{\frac{\m_{k_1+1}}{\m_{k_1}}-1}
\prod_{v=k_1}^{2}\mE_{{\mathcal U}_{v}} \zeta_{v-1}^{\frac{\m_v}{\m_{v-1}}-1}
\sum_{i_3=1}^{l} \lp \mE_{{\mathcal U}_1} Z_{i_3}^{\m_1}  \rp^{p-1}
  \nonumber \\
  & & \times
  \mE_{{\mathcal U}_1}\frac{(C^{(i_1)})^{s-1} A^{(i_1,i_2)} \y_j^{(i_2)}}{Z^{1-\m_1}} \Bigg ) \Bigg .   \Bigg. \Bigg)   \nonumber \\
      & = &  \frac{\sqrt{\p_{k_1} -\p_{k_1+1}}
}{\p_{k_1} -\p_{k_1+1}} T_{k_1+1,1,j} \nonumber \\
 & = &  \frac{\sqrt{\p_{k_1} -\p_{k_1+1}}
}{\p_{k_1} -\p_{k_1+1}}\lp \Pi_{k_1+1,1,j}^{(1)}
+  \Pi_{k_1+1,1,j}^{(2)}\rp,
 \end{eqnarray}
where all three $T_{k_1+1,1,j}$, $\Pi_{k_1+1,1,j}^{(1)}$, and $\Pi_{k_1+1,1,j}^{(1)}$ are determined in \cite{Stojnicnflldp25}. In particular,  \cite{Stojnicnflldp25} obtained
 \begin{eqnarray}\label{eq:rthlev2genanal19}
 \Pi_{k_1+1,1,j}^{(1)}=\frac{(\p_{k_1} -\p_{k_1+1})}{(\p_{k_1-1}-\p_{k_1})}T_{k_1,1,j},
 \end{eqnarray}
and
\begin{eqnarray}\label{eq:rthlev2genanal26}
 \sum_{i_1=1}^{l}\sum_{i_2=1}^{l} \sum_{j=1}^{m}
\beta_{i_1} \Pi_{k_1+1,1,j}^{(2)}
  & = & -\sqrt{1-t} s\beta^2(\p_{k_1}-\p_{k_1+1}) \lp \m_{k_1} - \m_{k_1+1} \rp \nonumber\\
 & & \times
  \mE_{G,{\mathcal U}_{r+1}} \left \langle \|\x^{(i_1)}\|_2\|\x^{(p_1)}\|_2 (\y^{(p_2)})^T \y^{(i_2)} \right \rangle_{\gamma_{k_1+1}^{(r)}}.
\end{eqnarray}
Following \cite{Stojnicsflgscompyx23}'s (77),  a combination of (\ref{eq:rthlev2genanal12}), (\ref{eq:rthlev2genanal19}), and (\ref{eq:rthlev2genanal26}), then gives
 \begin{eqnarray}\label{eq:rthlev2genanal27}
 \sum_{i_1=1}^{l}\sum_{i_2=1}^{l} \sum_{j=1}^{m}
 \frac{\beta_{i_1}\sqrt{1-t}}{\sqrt{\p_{k_1}-\p_{k_1+1}}} T_{k_1+1,1,j}^{\p}
& = &
 \sum_{i_1=1}^{l}\sum_{i_2=1}^{l} \sum_{j=1}^{m}
\beta_{i_1} \frac{\sqrt{1-t}}{\p_{k_1}-\p_{k_1+1}}     \lp \Pi_{k_1+1,1,j}^{(1)}
+  \Pi_{k_1+1,1,j}^{(2)}\rp \nonumber \\
& = &
 \sum_{i_1=1}^{l}\sum_{i_2=1}^{l} \sum_{j=1}^{m}
 \frac{\beta_{i_1}\sqrt{1-t}T_{k_1,1,j}}{\p_{k_1-1}-\p_{k_1}}  +   \sum_{i_1=1}^{l}\sum_{i_2=1}^{l} \sum_{j=1}^{m}
 \frac{\beta_{i_1}\sqrt{1-t} \Pi_{k_1+1,1,j}^{(2)}}{\p_{k_1}-\p_{k_1+1}} \nonumber \\
& = &
 \sum_{i_1=1}^{l}\sum_{i_2=1}^{l} \sum_{j=1}^{m}
 \frac{\beta_{i_1}\sqrt{1-t}T_{k_1,1,j}^{\p}}{\sqrt{\p_{k_1-1}-\p_{k_1}}}  +   \sum_{i_1=1}^{l}\sum_{i_2=1}^{l} \sum_{j=1}^{m}
 \frac{\beta_{i_1}\sqrt{1-t} \Pi_{k_1+1,1,j}^{(2)}}{\p_{k_1}-\p_{k_1+1}} \nonumber \\
 & = & \sum_{i_1=1}^{l}\sum_{i_2=1}^{l} \sum_{j=1}^{m}
 \frac{\beta_{i_1}\sqrt{1-t}T_{k_1,1,j}^{\p}}{\sqrt{\p_{k_1-1}-\p_{k_1}}}   \nonumber \\
 &  & - (1-t)\Bigg( \Bigg. s\beta^2 \lp \m_{k_1} -  \m_{k_1+1} \rp  \nonumber \\
 & & \times
  \mE_{G,{\mathcal U}_{r+1}} \left \langle \|\x^{(i_1)}\|_2\|\x^{(p_1)}\|_2 (\y^{(p_2)})^T \y^{(i_2)} \right \rangle_{\gamma_{k_1+1}^{(r)} } \Bigg. \Bigg).
\end{eqnarray}
Following into the footsteps of \cite{Stojnicsflgscompyx23} one now observes that a repetition of the above mechanism together with the first level results from Section \ref{sec:gencon} allow to  write in an analogous fashion for sequence $\lp T_{k,2}^{\q}\rp_{k=1:r+1}$
\begin{eqnarray}\label{eq:rthlev2genanal28}
 \sum_{i_1=1}^{l}\sum_{i_2=1}^{l}
 \frac{\beta_{i_1}\sqrt{1-t} \|\y^{(i_2)}\|_2}{\sqrt{\q_{k_1}-\q_{k_1+1}}} T_{k_1+1,2}^{\q}
 & = &  \sum_{i_1=1}^{l}\sum_{i_2=1}^{l} \sum_{j=1}^{m}
 \frac{\beta_{i_1}\sqrt{1-t} \|\y^{(i_2)}\|_2T_{k_1,2}^{\q}}{\sqrt{\q_{k_1-1}-\q_{k_1}}} \nonumber \\
 &  & - (1-t) \Bigg( \Bigg. s\beta^2 \lp \m_{k_1} - \m_{k_1+1} \rp \nonumber \\
 & &
\times  \mE_{G,{\mathcal U}_{r+1}} \left \langle   (\x^{(p_1)})^T \x^{(i_1)} \|\y^{(i_2)}\|_2\|\y^{(p_2)}\|_2  \right \rangle_{\gamma_{k_1+1}^{(r)} } \Bigg.\Bigg), \nonumber \\
\end{eqnarray}
and for sequence $\lp T_{k,3}^{(\p,\q)}\rp_{k=1:r+1}$
 \begin{eqnarray}\label{eq:rthlev2genanal29}
 \sum_{i_1=1}^{l}\sum_{i_2=1}^{l}
 \frac{\beta_{i_1}\sqrt{t} \|\y^{(i_2)}\|_2}{\sqrt{\p_{k_1}\q_{k_1}-\p_{k_1+1}\q_{k_1+1}}} T_{k_1+1,3}^{(\p,\q)}
 & = &   \sum_{i_1=1}^{l}\sum_{i_2=1}^{l}
\frac{\beta_{i_1} \sqrt{t} \|\y^{(i_2)}\|_2}{\sqrt{\p_{k_1-1}\q_{k_1-1}-\p_{k_1}\q_{k_1}}} T_{k_1,3}^{(\p,\q)}
\nonumber \\
 &  & - t\Bigg( \Bigg. s\beta^2 \lp  \m_{k_1} - \m_{k_1+1} \rp \nonumber \\
& & \times   \mE_{G,{\mathcal U}_{r+1}} \left \langle   \|\x^{(i_1)}\|_2\|\x^{(p_1)}\|_2 \|\y^{(i_2)}\|_2\|\y^{(p_2)}\|_2  \right \rangle_{\gamma_{k_1+1}^{(r)} } \Bigg. \Bigg).
\end{eqnarray}

\subsection{Connecting  everything  together -- $r$-th level}
\label{sec:rthall}

The above results are summarized in the following theorem.
\begin{theorem}
\label{thm:thm3}
For a positive integer $r\in\mN$ and $k\in\{1,2,\dots,r+1\}$ let vectors $\m=[\m_0,\m_1,\m_2,...,\m_r,\m_{r+1}]$,
$\p=[\p_0,\p_1,...,\p_r,\p_{r+1}]$, and $\q=[\q_0,\q_1,\q_2,\dots,\q_r,\q_{r+1}]$  be such that $\m_0=1$, $\m_{r+1}=0$, $1\geq\p_0\geq \p_1\geq \p_2\geq \dots \geq \p_r\geq \p_{r+1} =0$, and $1\geq\q_0\geq \q_1\geq \q_2\geq \dots \geq \q_r\geq \q_{r+1} = 0$. Assume that the elements of $G\in\mR^{m\times n}$, $u^{(4,k)}\in\mR$, $\u^{(2,k)}\in\mR^m$, and $\h^{(k)}\in\mR^n$ are independent standard normals and set $a_k=\sqrt{\p_{k-1}\q_{k-1}-\p_k\q_k}$, $b_k=\sqrt{\p_{k-1}-\p_{k}}$, $c_k=\sqrt{\q_{k-1}-\q_{k}}$, and ${\mathcal U}_k\triangleq [u^{(4,k)},\u^{(2,k)},\h^{(2k)}]$. Also, assume that sets ${\mathcal X}$, $\bar{{\mathcal X}}$, and ${\mathcal Y}$, scalars  $\beta$, $p$, and $s$, and function $f_{\bar{\x}^{(i_3)} } (\cdot)$ are as in Proposition \ref{thm:thm1} and consider the following function
\begin{equation}\label{eq:thm3eq1}
\psi(t)  =  \mE_{G,{\mathcal U}_{r+1}} \frac{1}{p|s|\sqrt{n}\m_r} \log
\lp \mE_{{\mathcal U}_{r}} \lp \dots \lp \mE_{{\mathcal U}_2}\lp\lp    \sum_{i_3=1}^{l} \lp \mE_{{\mathcal U}_1}  Z_{i_3}^{\m_1}\rp^p\rp^{\frac{\m_2}{\m_1}}\rp\rp^{\frac{\m_3}{\m_2}} \dots \rp^{\frac{\m_{r}}{\m_{r-1}}}\rp,
\end{equation}
where
\begin{eqnarray}\label{eq:thm3eq2}
Z_{i_3} & \triangleq & \sum_{i_1=1}^{l}\lp\sum_{i_2=1}^{l}e^{\beta D_0^{(i_1,i_2,i_3)}} \rp^{s} \nonumber \\
 D_0^{(i_1,i_2,i_3)} & \triangleq & \sqrt{t}(\y^{(i_2)})^T
 G\x^{(i_1)}+\sqrt{1-t}\|\x^{(i_2)}\|_2 (\y^{(i_2)})^T\lp\sum_{k=1}^{r+1}b_k\u^{(2,k)}\rp\nonumber \\
 & & +\sqrt{t}\|\x^{(i_1)}\|_2\|\y^{(i_2)}\|_2\lp\sum_{k=1}^{r+1}a_ku^{(4,k)}\rp +\sqrt{1-t}\|\y^{(i_2)}\|_2\lp\sum_{k=1}^{r+1}c_k\h^{(k)}\rp^T\x^{(i)}
 + f_{\bar{\x}^{(i_3)} } (\x^{(i_1)}). \nonumber \\
 \end{eqnarray}
For  $\zeta_k$ given in (\ref{eq:rthlev2genanal7a}) and (\ref{eq:rthlev2genanal7b})   set
\begin{eqnarray}\label{eq:thm3eq3}
 \Phi_{{\mathcal U}_1}^{(i_3)} & \triangleq &  \mE_{{\mathcal U}_{1}} \frac{  Z_{i_3}^{\m_1}  } {  \mE_{{\mathcal U}_{1}} Z_{i_3}^{\m_1}  } \nonumber \\
 \Phi_{{\mathcal U}_k} & \triangleq &  \mE_{{\mathcal U}_{k}} \frac{\zeta_{k-1}^{\frac{\m_k}{\m_{k-1}}}}{\zeta_k} ,k\geq 2 \nonumber \\
 \gamma_{00}(i_3) & = &
\frac{  \lp  \mE_{{\mathcal U}_{1}} Z_{i_3}^{\m_1} \rp^p   }  { \sum_{i_3=1}^{l} \lp  \mE_{{\mathcal U}_{1}} Z_{i_3}^{\m_1} \rp^p   } \nonumber \\
 \gamma_0(i_1,i_2;i_3) & = &
\frac{(C_{i_3}^{(i_1)})^{s}}{Z_{i_3}}  \frac{A_{i_3}^{(i_1,i_2)}}{C_{i_3}^{(i_1)}} \nonumber \\
\gamma_{01}^{(r)}  & = & \prod_{k=r}^{2}\Phi_{{\mathcal U}_k} \gamma_{00}(i_3) \Phi_{{\mathcal U}_1}^{(i_3)} (\gamma_0(i_1,i_2;i_3))  \nonumber \\
\gamma_{02}^{(r)}  & = & \prod_{k=r}^{2} \Phi_{{\mathcal U}_k}   \gamma_{00}(i_3)    \Phi_{{\mathcal U}_1}^{(i_3)}   (\gamma_0(i_1,i_2;i_3)\times \gamma_0(i_1,p_2;i_3)) \nonumber \\
 \gamma_{1}^{(r)}  & = & \prod_{k=r}^{2} \Phi_{{\mathcal U}_k}   \gamma_{00}(i_3)    \Phi_{{\mathcal U}_1}^{(i_3)}   (\gamma_0(i_1,i_2;i_3)\times \gamma_0(p_1,p_2;i_3)) \nonumber \\
\gamma_{2}^{(r)} \triangleq \gamma_{21}^{(r)}  & = & \prod_{k=r}^{2}\Phi_{{\mathcal U}_k} \lp
  \gamma_{00}(i_3)    \Phi_{{\mathcal U}_1}^{(i_3)}   \gamma_0(i_1,i_2;i_3)\times
    \gamma_{00}(p_3)    \Phi_{{\mathcal U}_1}^{(p_3)}
  \gamma_0(p_1,p_2;p_3) \rp  \nonumber \\
\gamma_{22}^{(r)}  & = & \prod_{k=r}^{2}\Phi_{{\mathcal U}_k} \lp
  \gamma_{00}(i_3)  \lp  \Phi_{{\mathcal U}_1}^{(i_3)}   \gamma_0(i_1,i_2;i_3)\times
     \Phi_{{\mathcal U}_1}^{(i_3)}
  \gamma_0(p_1,p_2;i_3) \rp \rp \nonumber \\
\gamma_{k_1+1}^{(r)}  & = & \prod_{k=r}^{k_1+1}\Phi_{{\mathcal U}_k} \lp \prod_{k=k_1}^{2}\Phi_{{\mathcal U}_k}
  \gamma_{00}(i_3)    \Phi_{{\mathcal U}_1}^{(i_3)}   \gamma_0(i_1,i_2;i_3)\times \prod_{k=k_1}^{2} \Phi_{{\mathcal U}_k}
    \gamma_{00}(p_3)    \Phi_{{\mathcal U}_1}^{(p_3)}
  \gamma_0(p_1,p_2;p_3) \rp, k_1\geq 2. \nonumber \\
 \end{eqnarray}
Also, let
\begin{eqnarray}\label{eq:thm3eq41aa0}
\phi^{(1,\p)} & = &
  -(1-t)\lp \m_{1}-\m_{2}\rp p \mE_{G,{\mathcal U}_{r+1}} \langle \|\x^{(i_1)}\|_2\|\x^{(p_1)}\|_2(\y^{(p_2)})^T\y^{(i_2)} \rangle_{\gamma_{21}^{(r)}} \nonumber \\
& &   - t \q_{k_1}
\lp \m_{1}-\m_{2}\rp p \mE_{G,{\mathcal U}_{r+1}} \langle\|\x^{(i_1)}\|_2\|\x^{(p_1)}\|_2\|\y^{(i_2)}\|_2\|\y^{(p_2)}\|_2\rangle_{\gamma_{21}^{(r)}}\nonumber \\
&  &
  +(1-t) \m_{1}  (p-1)\mE_{G,{\mathcal U}_{r+1}} \langle \|\x^{(i_1)}\|_2\|\x^{(p_1)}\|_2(\y^{(p_2)})^T\y^{(i_2)} \rangle_{\gamma_{22}^{(r)}} \nonumber \\
& &   + t \q_{k_1}
\m_1 (p-1) \mE_{G,{\mathcal U}_{r+1}} \langle\|\x^{(i_1)}\|_2\|\x^{(p_1)}\|_2\|\y^{(i_2)}\|_2\|\y^{(p_2)}\|_2\rangle_{\gamma_{22}^{(r)}}
\nonumber \\
\phi^{(1,\q)} & = &
   -(1-t)  \lp \m_{1}-\m_{2}\rp p \mE_{G,{\mathcal U}_{r+1}} \langle \|\y^{(i_2)}\|_2\|\y^{(p_2)}\|_2(\x^{(i_1)})^T\x^{(p_1)}\rangle_{\gamma_{21}^{(r)}} \nonumber \\
& &   - t \p_{k_1}
\lp \m_{1}-\m_{2}\rp p  \mE_{G,{\mathcal U}_{r+1}} \langle\|\x^{(i_1)}\|_2\|\x^{(p_1)}\|_2\|\y^{(i_2)}\|_2\|\y^{(p_2)}\|_2\rangle_{\gamma_{21}^{(r)}}
\nonumber \\
& &
   +(1-t)  \m_{1} (p-1) \mE_{G,{\mathcal U}_{r+1}} \langle \|\y^{(i_2)}\|_2\|\y^{(p_2)}\|_2(\x^{(i_1)})^T\x^{(p_1)}\rangle_{\gamma_{22}^{(r)}} \nonumber \\
& &   + t \p_{k_1} \m_{1}  (p-1)  \mE_{G,{\mathcal U}_{r+1}} \langle\|\x^{(i_1)}\|_2\|\x^{(p_1)}\|_2\|\y^{(i_2)}\|_2\|\y^{(p_2)}\|_2\rangle_{\gamma_{22}^{(r)}}.
\end{eqnarray}
and for $k_1\geq 2$
\begin{eqnarray}\label{eq:thm3eq41}
\phi^{(k_1,\p)} & = &
  -(1-t)\lp \m_{k_1}-\m_{k_1+1}\rp p \mE_{G,{\mathcal U}_{r+1}} \langle \|\x^{(i_1)}\|_2\|\x^{(p_1)}\|_2(\y^{(p_2)})^T\y^{(i_2)} \rangle_{\gamma_{k_1+1}^{(r)}} \nonumber \\
& &   - t \q_{k_1}
\lp \m_{k_1}-\m_{k_1+1}\rp p \mE_{G,{\mathcal U}_{r+1}} \langle\|\x^{(i_1)}\|_2\|\x^{(p_1)}\|_2\|\y^{(i_2)}\|_2\|\y^{(p_2)}\|_2\rangle_{\gamma_{k_1+1}^{(r)}}\nonumber \\
\phi^{(k_1,\q)} & = &
   -(1-t)  \lp \m_{k_1}-\m_{k_1+1}\rp  p \mE_{G,{\mathcal U}_{r+1}} \langle \|\y^{(i_2)}\|_2\|\y^{(p_2)}\|_2(\x^{(i_1)})^T\x^{(p_1)}\rangle_{\gamma_{k_1+1}^{(r)}} \nonumber \\
& &   - t \p_{k_1}
\lp \m_{k_1}-\m_{k_1+1}\rp  p  \mE_{G,{\mathcal U}_{r+1}} \langle\|\x^{(i_1)}\|_2\|\x^{(p_1)}\|_2\|\y^{(i_2)}\|_2\|\y^{(p_2)}\|_2\rangle_{\gamma_{k_1+1}^{(r)}}.
\end{eqnarray}
Then for $k_1\in\{1,2,\dots,r\}$
\begin{eqnarray}\label{eq:thm3eq42}
\frac{d\psi(t)}{d\p_{k_1}}  & = &       \frac{\mbox{sign}(s)s\beta^2}{2\sqrt{n}} \phi^{(k_1,\p)}\nonumber \\
\frac{d\psi(t)}{d\q_{k_1}}  & = &       \frac{\mbox{sign}(s)s\beta^2}{2\sqrt{n}} \phi^{(k_1,\q)}.
 \end{eqnarray}
 \end{theorem}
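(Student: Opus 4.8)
The plan is to reduce the $r$-th level statement to the telescoping structure already exposed in (\ref{eq:rthlev2genanal10e})--(\ref{eq:rthlev2genanal10g1}) and then to collapse that telescoping using the scaling-and-cancellation recursions (\ref{eq:rthlev2genanal27})--(\ref{eq:rthlev2genanal29}). Concretely, for each fixed $k_1$ the derivatives $\frac{d\psi(t)}{d\p_{k_1}}$ and $\frac{d\psi(t)}{d\q_{k_1}}$ are already written in (\ref{eq:rthlev2genanal10e}) and (\ref{eq:rthlev2genanal10e1}) as weighted differences $\beta_{i_1}(T_{k_1+1}^{\p}-T_{k_1}^{\p})$ and $\beta_{i_1}(T_{k_1+1}^{\q}-T_{k_1}^{\q})$, where each $T_k^{\p}$ (resp. $T_k^{\q}$) assembles, via (\ref{eq:rthlev2genanal10f}) and (\ref{eq:rthlev2genanal10f1}), the rescaled building blocks $T_{k,1,j}^{\p}$ and $T_{k,3}^{(\p,\q)}$ (resp. $T_{k,2}^{\q}$ and $T_{k,3}^{(\p,\q)}$). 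So everything comes down to understanding how two successive blocks in each of the three sequences $(T_{k,1,j}^{\p})$, $(T_{k,2}^{\q})$, $(T_{k,3}^{(\p,\q)})$ are related.

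First I would establish the recursion. Applying Gaussian integration by parts in the variable $\u_j^{(2,k_1)}$ (and analogously in $u^{(4,k_1)}$ and $\u^{(i_1,3,k_1)}$) \emph{inside} the $\zeta$-tower produces, as carried out in (\ref{eq:rthlev2genanal11})--(\ref{eq:rthlev2genanal12}), the successive-scaling factors $\tfrac{1}{\sqrt{\p_{k_1-1}-\p_{k_1}}}$ and $\tfrac{1}{\sqrt{\p_{k_1}-\p_{k_1+1}}}$ together with a split $T_{k_1+1,1,j}=\Pi_{k_1+1,1,j}^{(1)}+\Pi_{k_1+1,1,j}^{(2)}$. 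The content of (\ref{eq:rthlev2genanal19}) and (\ref{eq:rthlev2genanal26}) is exactly that $\Pi^{(1)}$ is a pure rescaling of the \emph{previous} block $T_{k_1,1,j}$, whereas $\Pi^{(2)}$ is a genuinely new reweighted average against the measure $\gamma_{k_1+1}^{(r)}$, carrying the prefactor $(\m_{k_1}-\m_{k_1+1})$. This is the crucial structural fact; I would invoke it from \cite{Stojnicnflldp25} and \cite{Stojnicsflgscompyx23}, noting only that the appearance of $b_{k_1}$, $c_{k_1}$, $a_{k_1}$ merely rescales the relevant Gaussians.

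The main computation is then the cancellation. Inserting the normalized recursions (\ref{eq:rthlev2genanal27})--(\ref{eq:rthlev2genanal29}) into the telescoping differences, the rescaled previous-block contributions $\tfrac{T_{k_1,1,j}^{\p}}{\sqrt{\p_{k_1-1}-\p_{k_1}}}$ that arise from $T_{k_1+1}^{\p}$ match term-for-term the corresponding pieces of $T_{k_1}^{\p}$ and cancel; only the new $\gamma_{k_1+1}^{(r)}$ contributions survive. Collecting the $\u^{(2)}$ and $u^{(4)}$ contributions and pulling the common $(1-t)$, $t\q_{k_1}$ (resp. $t\p_{k_1}$), $s$, and $\beta^2$ factors to the front reproduces exactly $\phi^{(k_1,\p)}$ and $\phi^{(k_1,\q)}$ of (\ref{eq:thm3eq41}) for $k_1\geq 2$, whence (\ref{eq:thm3eq42}) follows for those indices.

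Finally, the boundary case $k_1=1$ must be handled separately, and this is where the extra terms in (\ref{eq:thm3eq41aa0}) come from: at the innermost level there is no \emph{previous} block to cancel against, so one instead imports the first-level identities of Section \ref{sec:gencon}, precisely (\ref{eq:cpt8}), and lifts them through the outer tower via the operators $\Phi_{{\mathcal U}_{k}}$ built into $\gamma_{21}^{(r)}$ and $\gamma_{22}^{(r)}$. This yields both the $\gamma_{21}^{(r)}$ and $\gamma_{22}^{(r)}$ contributions with the $p$ versus $p-1$ split, consistent with the $r=1$ specialization where $\m_{r+1}=0$ forces $\m_1-\m_2=\m_1$. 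The hard part throughout is the bookkeeping of this integration-by-parts step inside the nested tower: one must check that differentiating the stacked factors $\zeta_{k_1-1}^{\m_{k_1}/\m_{k_1-1}}$ produces exactly the prefactor $\m_{k_1}-\m_{k_1+1}$ and the correctly split product of $\Phi$-operators defining $\gamma_{k_1+1}^{(r)}$ in (\ref{eq:thm3eq3}), with no surviving cross terms beyond those already recorded at the first level.
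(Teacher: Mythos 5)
Your proposal is correct and follows essentially the same route as the paper: for $k_1\geq 2$ it combines the telescoping representation (\ref{eq:rthlev2genanal10e})--(\ref{eq:rthlev2genanal10g1}) with the scaling/cancellation recursions (\ref{eq:rthlev2genanal27})--(\ref{eq:rthlev2genanal29}) so that only the $\gamma_{k_1+1}^{(r)}$-averaged new terms survive, and it treats $k_1=1$ separately by re-deriving the first-level identities of Section \ref{sec:gencon} inside the $\zeta$-tower, which is precisely the paper's two-case proof, including the $\gamma_{21}^{(r)}$ versus $\gamma_{22}^{(r)}$ split with prefactors $(\m_1-\m_2)p$ and $\m_1(p-1)$. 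The only slight imprecision is the phrase that at $k_1=1$ there is \emph{no} previous block to cancel against; in fact the $\gamma_{01}^{(r)}$, $\gamma_{02}^{(r)}$, $\gamma_{1}^{(r)}$ contributions of $T_{1}^{\p}$ and $T_{2}^{\p}$ do cancel, and what distinguishes this case is that differentiation also hits the $\zeta_1$-power and the $p$-power, producing the two extra term families -- which your imported identity (\ref{eq:cpt8}) already encodes, so the argument stands.
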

\begin{proof}  The proofs follow through the results presented  in earlier sections and \cite{Stojnicnflldp25}. We distinguish two cases:  (i) $k_1>1 $ and (ii) $k_1=1$.

\vspace{.1in}

\noindent \underline{\textbf{\emph{(i) $k_1 > 1$ case:}}}  It  follows automatically from the previous discussion after one combines  (\ref{eq:rthlev2genanal10e})-(\ref{eq:rthlev2genanal10f}), (\ref{eq:rthlev2genanal10e1})-(\ref{eq:rthlev2genanal10f1}), and (\ref{eq:rthlev2genanal27})-(\ref{eq:rthlev2genanal29}).

\vspace{.1in}

\noindent \underline{\textbf{\emph{(i) $k_1 = 1$ case:}}}  The proof follows through a direct extension of the first level  results  presented in Section \ref{sec:gencon}. Second level generalization  presented in  \cite{Stojnicnflldp25}'s Section 3 contains structurally identical forms that are needed for such an extension.  From (\ref{eq:rthlev2genanal10e})-(\ref{eq:rthlev2genanal10f}), (\ref{eq:rthlev2genanal10e1})-(\ref{eq:rthlev2genanal10f1}), and (\ref{eq:rthlev2genanal27})-(\ref{eq:rthlev2genanal29}), one has that for $k_1=1$ three  pairs ($T_{k,1,j}^{\p}, k\in\{1,2\}$, $T_{k,2}^{\q}, k\in\{1,2\}$, and $T_{k,3}^{\p}, k\in\{1,2\}$)  need to be determined. As the strategies are identical for each of them we show the details for  $T_{k,1,j}^{\p}, k\in\{1,2\}$ and $T_{k,3}^{\p}, k\in\{1,2\}$ which is sufficient to prove that $\frac{d\psi(t)}{d\p_1}$ indeed has the form given in (\ref{eq:thm3eq41aa0}). The proofs for $T_{k,2}^{\q}, k\in\{1,2\}$ and consequently $\frac{d\psi(t)}{d\q_1}$ follow in a completely analogous manner.

\vspace{.1in}

\noindent \underline{\textbf{\emph{Determining $T_{1,1,j}^{\p}$ on the $r$-th level}}}

We first note that both $T_{1,1,j}^{\p}$ and $T_{2,1,j}^{\p}$ are already determined on the first level. $T_{1,1,j}^{\p}$ is determined in (\ref{eq:liftgenAanal19i}) and  $T_{2,1,j}^{\p}$ in (\ref{eq:genDanal25}). We would like to show that the same forms remain in place at the $r$-th level with $\gamma$'s from the first level, $\gamma_{\cdot}^{(1)}$'s from (\ref{eq:genAanal19e}) and (\ref{eq:genAanal19e1}), replaced by the $\gamma$'s from the $r$-th level, $\gamma_{\cdot}^{(r)}$'s from (\ref{eq:thm3eq3}).

From (\ref{eq:rthlev2genanal10g}), we first recall on
 \begin{eqnarray}\label{eq:proofeq1}
 T_{1,1,j}^{\p} & = &   \mE_{G,{\mathcal U}_{r+1}} \lp
\zeta_r^{-1}\prod_{v=r}^{2}\mE_{{\mathcal U}_{v}} \zeta_{v-1}^{\frac{\m_v}{\m_{v-1}}-1}
\sum_{i_3=1}^{l} \lp \mE_{{\mathcal U}_1} Z_{i_3}^{\m_1}  \rp^{p-1}
  \mE_{{\mathcal U}_1}\frac{(C_{i_3}^{(i_1)})^{s-1} A_{i_3}^{(i_1,i_2)} \y_j^{(i_2)}\u_j^{(2,1)}}{Z_{i_3}^{1-\m_1}} \rp.
 \end{eqnarray}
We then follow the strategy of Section \ref{sec:hand1T11}. Applying Gaussian integration by parts we write analogously to (\ref{eq:liftgenAanal19}) (and to \cite{Stojnicnflldp25}'s (20))
 \begin{eqnarray}\label{eq:proofliftgenAanal19}
T_{1,1,j}^{\p} & = &
   \mE_{G,{\mathcal U}_{r+1}} \lp
\zeta_r^{-1}\prod_{v=r}^{2}\mE_{{\mathcal U}_{v}} \zeta_{v-1}^{\frac{\m_v}{\m_{v-1}}-1}
\sum_{i_3=1}^{l} \lp \mE_{{\mathcal U}_1} Z_{i_3}^{\m_1}  \rp^{p-1}
  \mE_{{\mathcal U}_1}\frac{(C_{i_3}^{(i_1)})^{s-1} A_{i_3}^{(i_1,i_2)} \y_j^{(i_2)}\u_j^{(2,1)}}{Z_{i_3}^{1-\m_1}} \rp
  \nonumber \\
 & = & \mE_{G,{\mathcal U}_{r+1}} \Bigg (\Bigg.
\zeta_r^{-1}\prod_{v=r}^{2}\mE_{{\mathcal U}_{v}} \zeta_{v-1}^{\frac{\m_v}{\m_{v-1}}-1}
\sum_{i_3=1}^{l} \lp \mE_{{\mathcal U}_1} Z_{i_3}^{\m_1}  \rp^{p-1}
\nonumber \\
& & \times
\mE_{{\mathcal U}_1}\lp \mE_{{\mathcal U}_1}(\u_j^{(2,1)}\u_j^{(2,1)}) \frac{d}{d\u_j^{(2,1)}}\lp\frac{(C_{i_3}^{(i_1)})^{s-1} A_{i_3}^{(i_1,i_2)}\y_j^{(i_2)}}{Z_{i_3}^{1-\m_1}}\rp\rp\Bigg ) \Bigg . \nonumber \\
& = & \mE_{G,{\mathcal U}_{r+1}}   \Bigg ( \Bigg .
\zeta_r^{-1}\prod_{v=r}^{2}\mE_{{\mathcal U}_{v}} \zeta_{v-1}^{\frac{\m_v}{\m_{v-1}}-1}
\sum_{i_3=1}^{l} \lp \mE_{{\mathcal U}_1} Z_{i_3}^{\m_1}  \rp^{p-1}
\nonumber \\
& & \times
\mE_{{\mathcal U}_1}(\u_j^{(2,1)}\u_j^{(2,1)})\mE_{{\mathcal U}_1}\lp  \frac{d}{d\u_j^{(2,1)}}\lp\frac{(C_{i_3}^{(i_1)})^{s-1} A_{i_3}^{(i_1,i_2)}\y_j^{(i_2)}}{Z_{i_3}^{1-\m_1}}\rp\rp\Bigg ) \Bigg. \nonumber \\
& = & \mE_{G,{\mathcal U}_{r+1}} \Bigg (\Bigg.
\zeta_r^{-1}\prod_{v=r}^{2}\mE_{{\mathcal U}_{v}} \zeta_{v-1}^{\frac{\m_v}{\m_{v-1}}-1}
\sum_{i_3=1}^{l} \lp \mE_{{\mathcal U}_1} Z_{i_3}^{\m_1}  \rp^{p-1}
 \mE_{{\mathcal U}_1}\lp  \frac{d}{d\u_j^{(2,1)}}\lp\frac{(C_{i_3}^{(i_1)})^{s-1} A_{i_3}^{(i_1,i_2)}\y_j^{(i_2)}}{Z_{i_3}^{1-\m_1}}\rp\rp\Bigg ) \Bigg..
 \nonumber \\
\end{eqnarray}
Similarly to (\ref{eq:liftgenAanal19a}), the above can be rewritten as
\begin{eqnarray}\label{eq:proofliftgenAanal19a}
T_{1,1,j}^{\p} & = &    \sqrt{\p_0-\p_1}\mE_{G,{\mathcal U}_{r+1}} \lp
\zeta_r^{-1}\prod_{v=r}^{2}\mE_{{\mathcal U}_{v}} \zeta_{v-1}^{\frac{\m_v}{\m_{v-1}}-1}
\sum_{i_3=1}^{l} \lp \mE_{{\mathcal U}_1} Z_{i_3}^{\m_1}  \rp^{p-1}
\lp \Theta_1+\Theta_2 \rp\rp,
\end{eqnarray}
with $\Theta_1$ and $\Theta_2$ as in (\ref{eq:liftgenAanal19c}) (and \cite{Stojnicnflldp25}'s (22))
{\small\begin{eqnarray}\label{eq:proofliftgenAanal19c}
\Theta_1 &  = &  \mE_{{\mathcal U}_1} \Bigg( \Bigg. \frac{\y_j^{(i_2)} \lp (C_{i_3}^{(i_1)})^{s-1}\beta_{i_1}A_{i_3}^{(i_1,i_2)}\y_j^{(i_2)}\sqrt{1-t} +A_{i_3}^{(i_1,i_2)}(s-1)(C_{i_3}^{(i_1)})^{s-2}\beta_{i_1}\sum_{p_2=1}^{l}A_{i_3}^{(i_1,p_2)}\y_j^{(p_2)}\sqrt{1-t}\rp}{Z_{i_3}^{1-\m_1}}\Bigg. \Bigg)\Bigg. \Bigg) \nonumber \\
\Theta_2 & = & -(1-\m_1)\mE_{{\mathcal U}_1} \lp\sum_{p_1=1}^{l}
\frac{(C_{i_3}^{(i_1)})^{s-1} A_{i_3}^{(i_1,i_2)}\y_j^{(i_2)}}{Z_{i_3}^{2-\m_1}}
s  (C_{i_3}^{(p_1)})^{s-1}\sum_{p_2=1}^{l}\beta_{p_1}A_{i_3}^{(p_1,p_2)}\y_j^{(p_2)}\sqrt{1-t}\rp\Bigg.\Bigg).
\end{eqnarray}}
One then observes the following
\begin{eqnarray}\label{eq:proofliftgenAanal19g}
\sum_{i_1=1}^{l}\sum_{i_2=1}^{l}\sum_{j=1}^{m}
\hspace{-.3in}
& &
\lp
 \zeta_r^{-1}
 \prod_{v=r}^{2}\mE_{{\mathcal U}_{v}} \zeta_{v-1}^{\frac{\m_v}{\m_{v-1}}-1}
\sum_{i_3=1}^{l} \lp \mE_{{\mathcal U}_1} Z_{i_3}^{\m_1}  \rp^{p-1}
 \frac{\beta_{i_1}\Theta_1}{\sqrt{1-t}}
 \rp
    =
    \nonumber \\
    &  = &
    \Bigg ( \Bigg.
 \prod_{v=r}^{2}\mE_{{\mathcal U}_{v}} \frac{\zeta_{v-1}^{\frac{\m_v}{\m_{v-1}}} } { \zeta_v}
  \sum_{i_3=1}^{l} \frac{
\lp \mE_{{\mathcal U}_1} Z_{i_3}^{\m_1} \rp^{p}    }{ \lp
\sum_{i_3=1}^{l}
\lp \mE_{{\mathcal U}_1} Z_{i_3}^{\m_1} \rp^p \rp   }
 \mE_{{\mathcal U}_1}\frac{Z_{i_3}^{\m_1}}{\mE_{{\mathcal U}_1} Z_{i_3}^{\m_1}}
  \sum_{i_1=1}^{l}\frac{(C_{i_3}^{(i_1)})^s}{Z_{i_3}}\sum_{i_2=1}^{l}\frac{A_{i_3}^{(i_1,i_2)}}{C_{i_3}^{(i_1)}}\beta_{i_1}^2\|\y^{(i_2)}\|_2^2
 \Bigg ) \Bigg.
  \nonumber\\
& & +  \Bigg ( \Bigg.
 \prod_{v=r}^{2}\mE_{{\mathcal U}_{v}} \frac{\zeta_{v-1}^{\frac{\m_v}{\m_{v-1}}} } { \zeta_v}
  \sum_{i_3=1}^{l} \frac{
\lp \mE_{{\mathcal U}_1} Z_{i_3}^{\m_1} \rp^{p}    }{ \lp
\sum_{i_3=1}^{l}
\lp \mE_{{\mathcal U}_1} Z_{i_3}^{\m_1} \rp^p \rp   }
 \mE_{{\mathcal U}_1}\frac{Z_{i_3}^{\m_1}}{\mE_{{\mathcal U}_1} Z_{i_3}^{\m_1}}
  \sum_{i_1=1}^{l}\frac{(s-1)(C_{i_3}^{(i_1)})^s}{Z_{i_3}}
  \nonumber \\
  & & \times
  \sum_{i_2=1}^{l}\sum_{p_2=1}^{l}\frac{A_{i_3}^{(i_1,i_2)}A_{i_3}^{(i_1,p_2)}}{(C_{i_3}^{(i_1)})^2}
   \beta_{i_1}^2(\y^{(p_2)})^T\y^{(i_2)}
 \Bigg ) \Bigg.
 \nonumber \\
 &  = & \beta^2 \Bigg ( \Bigg. \langle \|\x^{(i_1)}\|_2^2\|\y^{(i_2)}\|_2^2\rangle_{\gamma_{01}^{(r)}}
 +  (s-1) \langle \|\x^{(i_1)}\|_2^2(\y^{(p_2)})^T\y^{(i_2)}\rangle_{\gamma_{02}^{(r)}} \Bigg ) \Bigg. .
 \end{eqnarray}
 We then analogously have for the $\Theta_2$ counterpart
\begin{eqnarray}\label{eq:proofliftgenAanal19h}
\sum_{i_1=1}^{l}\sum_{i_2=1}^{l}\sum_{j=1}^{m}
\hspace{-.3in}
& &
\lp
 \zeta_r^{-1}
 \prod_{v=r}^{2}\mE_{{\mathcal U}_{v}} \zeta_{v-1}^{\frac{\m_v}{\m_{v-1}}-1}
\sum_{i_3=1}^{l} \lp \mE_{{\mathcal U}_1} Z_{i_3}^{\m_1}  \rp^{p-1}
 \frac{\beta_{i_1}\Theta_2}{\sqrt{1-t}}
 \rp
    =
    \nonumber \\
    &  = &
-s(1-\m_1) \mE_{G,{\mathcal U}_{r+1}} \Bigg( \Bigg.
 \prod_{v=r}^{2}\mE_{{\mathcal U}_{v}} \frac{\zeta_{v-1}^{\frac{\m_v}{\m_{v-1}}} } { \zeta_v}
 \sum_{i_3=1}^{l} \frac{
\lp \mE_{{\mathcal U}_1} Z_{i_3}^{\m_1} \rp^{p}    }{ \lp
\sum_{i_3=1}^{l}
\lp \mE_{{\mathcal U}_1} Z_{i_3}^{\m_1} \rp^p \rp   }
\nonumber \\
& &
 \hspace{-.05in} \times
 \frac{Z_{i_3}^{\m_1}}{\mE_{{\mathcal U}_1} Z_{i_3}^{\m_1}} \sum_{i_1=1}^{l}\frac{(C_{i_3}^{(i_1)})^s}{Z_{i_3}}\sum_{i_2=1}^{l}
\frac{A_{i_3}^{(i_1,i_2)}}{C_{i_3}^{(i_1)}}
 \sum_{p_1=1}^{l} \frac{(C_{i_3}^{(p_1)})^s}{Z_{i_3}}\sum_{p_2=1}^{l}\frac{A_{i_3}^{(p_1,p_2)}}{C_{i_3}^{(p_1)}} \beta_{i_1}\beta_{p_1}(\y^{(p_2)})^T\y^{(i_2)} \Bigg.\Bigg)\nonumber \\
& =&  \hspace{-.05in} -s\beta^2(1-\m_1) \mE_{G,{\mathcal U}_{r+1}} \langle \|\x^{(i_1)}\|_2\|\x^{(p_1)}\|_2(\y^{(p_2)})^T\y^{(i_2)} \rangle_{\gamma_{1}^{(r)}}.
\end{eqnarray}
An additional combination of  (\ref{eq:proofliftgenAanal19a}), (\ref{eq:proofliftgenAanal19g}), and (\ref{eq:proofliftgenAanal19h}) gives
\begin{eqnarray}\label{eq:proofliftgenAanal19i}
\sum_{i_1=1}^{l}\sum_{i_2=1}^{l}\sum_{j=1}^{m}
\hspace{-.3in}
& &
\beta_{i_1}\frac{\sqrt{1-t}}{\sqrt{\p_0-\p_1}}T_{1,1,j}^{\p}
=
\nonumber \\
& = &
 (1-t) \mE_{G,{\mathcal U}_{r+1}} \lp
\zeta_r^{-1}\prod_{v=r}^{2}\mE_{{\mathcal U}_{v}} \zeta_{v-1}^{\frac{\m_v}{\m_{v-1}}-1}
\sum_{i_3=1}^{l} \lp \mE_{{\mathcal U}_1} Z_{i_3}^{\m_1}  \rp^{p-1}
\lp \frac{\beta_{i_1}\Theta_1}{\sqrt{1-t}}+\frac{\beta_{i_1}\Theta_2}{\sqrt{1-t}} \rp\rp\nonumber \\
& = &  (1-t) \beta^2  \Bigg ( \Bigg .
\mE_{G,{\mathcal U}_{r+1}} \langle \|\x^{(i_1)}\|_2^2\|\y^{(i_2)}\|_2^2\rangle_{\gamma_{01}^{(r)}}
  +  (s-1)\mE_{G,{\mathcal U}_{r+1}}\langle \|\x^{(i_1)}\|_2^2(\y^{(p_2)})^T\y^{(i_2)}\rangle_{\gamma_{02}^{(r)}} \Bigg ) \Bigg . \nonumber \\
& & -  (1-t) s\beta^2(1-\m_1)\langle \|\x^{(i_1)}\|_2\|\x^{(p_1)}\|_2(\y^{(p_2)})^T\y^{(i_2)} \rangle_{\gamma_{1}^{(r)}}.
 \end{eqnarray}

 \vspace{.1in}

\noindent \underline{\textbf{\emph{Determining $T_{2,1,j}^{\p}$ on the $r$-th level}}}

From (\ref{eq:rthlev2genanal10g}), we first recall on
 \begin{eqnarray}\label{eq:proofxeq1}
 T_{2,1,j}^{\p} & = &   \mE_{G,{\mathcal U}_{r+1}} \lp
\zeta_r^{-1}\prod_{v=r}^{2}\mE_{{\mathcal U}_{v}} \zeta_{v-1}^{\frac{\m_v}{\m_{v-1}}-1}
\sum_{i_3=1}^{l} \lp \mE_{{\mathcal U}_1} Z_{i_3}^{\m_1}  \rp^{p-1}
  \mE_{{\mathcal U}_1}\frac{(C_{i_3}^{(i_1)})^{s-1} A_{i_3}^{(i_1,i_2)} \y_j^{(i_2)}\u_j^{(2,2)}}{Z_{i_3}^{1-\m_1}} \rp.
 \end{eqnarray}
We then follow the strategy of \cite{Stojnicnflldp25}'s Section 3.1.2 (part 1). Applying Gaussian integration by parts we have
\begin{eqnarray}\label{eq:proofxlev2genDanal19}
T_{2,1,j}^{\p} & = &   \mE_{G,{\mathcal U}_{r+1}} \lp
\zeta_r^{-1}\prod_{v=r}^{2}\mE_{{\mathcal U}_{v}} \zeta_{v-1}^{\frac{\m_v}{\m_{v-1}}-1}
\sum_{i_3=1}^{l} \lp \mE_{{\mathcal U}_1} Z_{i_3}^{\m_1}  \rp^{p-1}
  \mE_{{\mathcal U}_1}\frac{(C_{i_3}^{(i_1)})^{s-1} A_{i_3}^{(i_1,i_2)} \y_j^{(i_2)}\u_j^{(2,2)}}{Z_{i_3}^{1-\m_1}} \rp \nonumber \\
  & = &
\mE_{G,{\mathcal U}_{r+1}} \Bigg ( \Bigg.
\zeta_r^{-1}\prod_{v=r}^{3}\mE_{{\mathcal U}_{v}} \zeta_{v-1}^{\frac{\m_v}{\m_{v-1}}-1}
\sum_{i_3=1}^{l} \lp \mE_{{\mathcal U}_1} Z_{i_3}^{\m_1}  \rp^{p-1}
\nonumber \\
& &
\times
\mE_{{\mathcal U}_2}\lp\mE_{{\mathcal U}_2} (\u_j^{(2,2)}\u_j^{(2,2)})\frac{d}{d\u_j^{(2,2)}}\lp \frac{(C_{i_3}^{(i_1)})^{s-1} A_{i_3}^{(i_1,i_2)}\y_j^{(i_2)}}{Z_{i_3}^{1-\m_1}
 \zeta_1^{1-\frac{\m_2}{\m_1}} \lp
 \mE_{{\mathcal U}_1}  Z_{i_3}^{\m_1}
\rp^{1-p}   }
\rp\rp
\Bigg . \Bigg )
 \nonumber \\
& = &
 \sqrt{\p_1-\p_2} \mE_{G,{\mathcal U}_{r+1}} \Bigg ( \Bigg .
\zeta_r^{-1}\prod_{v=r}^{2}\mE_{{\mathcal U}_{v}} \zeta_{v-1}^{\frac{\m_v}{\m_{v-1}}-1}
\sum_{i_3=1}^{l} \lp \mE_{{\mathcal U}_1} Z_{i_3}^{\m_1}  \rp^{p-1}
\nonumber \\
& &
\times
 \frac{d}{d \lp \sqrt{\p_1-\p_2}\u_j^{(2,2)}\rp  }\lp \frac{(C_{i_3}^{(i_1)})^{s-1} A_{i_3}^{(i_1,i_2)}\y_j^{(i_2)}}{Z_{i_3}^{1-\m_1}}\rp\Bigg . \Bigg ) \nonumber \\
&  & +
 \sqrt{\p_1-\p_2} \mE_{G,{\mathcal U}_{r+1}} \Bigg ( \Bigg .
\zeta_r^{-1}\prod_{v=r}^{3}\mE_{{\mathcal U}_{v}} \zeta_{v-1}^{\frac{\m_v}{\m_{v-1}}-1}
\sum_{i_3=1}^{l} \lp \mE_{{\mathcal U}_1} Z_{i_3}^{\m_1}  \rp^{p-1}
\frac{(C_{i_3}^{(i_1)})^{s-1} A_{i_3}^{(i_1,i_2)}\y_j^{(i_2)}}{Z_{i_3}^{1-\m_1}}
\nonumber \\
& &
\times
 \frac{d}{d  \lp \sqrt{\p_1-\p_2}\u_j^{(2,2)}\rp  }\lp
\zeta_1^{\frac{\m_2}{\m_1} -1 }             \rp\Bigg . \Bigg ) \nonumber \\
&  & +
 \sqrt{\p_1-\p_2} \mE_{G,{\mathcal U}_{r+1}} \Bigg ( \Bigg .
\zeta_r^{-1}\prod_{v=r}^{2}\mE_{{\mathcal U}_{v}} \zeta_{v-1}^{\frac{\m_v}{\m_{v-1}}-1}
\sum_{i_3=1}^{l}
\frac{(C_{i_3}^{(i_1)})^{s-1} A_{i_3}^{(i_1,i_2)}\y_j^{(i_2)}}{Z_{i_3}^{1-\m_1}}
\nonumber \\
& &
\times
 \frac{d}{d  \lp \sqrt{\p_1-\p_2}\u_j^{(2,2)} \rp }\lp
\lp
 \mE_{{\mathcal U}_1}  Z_{i_3}^{\m_1}
\rp^{p-1}
           \rp\Bigg . \Bigg ) .
 \end{eqnarray}
We split $T_{2,1,j}^{\p}$ into three components
\begin{eqnarray}\label{eq:proofxlev2genDanal19a}
T_{2,1,j}^{\p}   =   T_{2,1,j}^{\p,c} +  T_{2,1,j}^{\p,d} +  T_{2,1,j}^{\p,e},
\end{eqnarray}
where
\begin{eqnarray}\label{eq:proofxlev2genDanal19b}
T_{2,1,j}^{\p,c}
& = &
 \sqrt{\p_1-\p_2} \mE_{G,{\mathcal U}_{r+1}} \Bigg ( \Bigg .
\zeta_r^{-1}\prod_{v=r}^{2}\mE_{{\mathcal U}_{v}} \zeta_{v-1}^{\frac{\m_v}{\m_{v-1}}-1}
\sum_{i_3=1}^{l} \lp \mE_{{\mathcal U}_1} Z_{i_3}^{\m_1}  \rp^{p-1}
\nonumber \\
& &
\times
 \frac{d}{d \lp \sqrt{\p_1-\p_2}\u_j^{(2,2)}\rp  }\lp \frac{(C_{i_3}^{(i_1)})^{s-1} A_{i_3}^{(i_1,i_2)}\y_j^{(i_2)}}{Z_{i_3}^{1-\m_1}}\rp\Bigg . \Bigg )
 \nonumber \\
 T_{2,1,j}^{\p,d} &  = &
 \sqrt{\p_1-\p_2} \mE_{G,{\mathcal U}_{r+1}} \Bigg ( \Bigg .
\zeta_r^{-1}\prod_{v=r}^{3}\mE_{{\mathcal U}_{v}} \zeta_{v-1}^{\frac{\m_v}{\m_{v-1}}-1}
\sum_{i_3=1}^{l} \lp \mE_{{\mathcal U}_1} Z_{i_3}^{\m_1}  \rp^{p-1}
\frac{(C_{i_3}^{(i_1)})^{s-1} A_{i_3}^{(i_1,i_2)}\y_j^{(i_2)}}{Z_{i_3}^{1-\m_1}}
\nonumber \\
& &
\times
 \frac{d}{d  \lp \sqrt{\p_1-\p_2}\u_j^{(2,2)}\rp  }\lp
\zeta_1^{\frac{\m_2}{\m_1} -1 }             \rp\Bigg . \Bigg ) \nonumber \\
T_{2,1,j}^{\p,e} &  = &
 \sqrt{\p_1-\p_2} \mE_{G,{\mathcal U}_{r+1}} \Bigg ( \Bigg .
\zeta_r^{-1}\prod_{v=r}^{2}\mE_{{\mathcal U}_{v}} \zeta_{v-1}^{\frac{\m_v}{\m_{v-1}}-1}
\sum_{i_3=1}^{l}
\frac{(C_{i_3}^{(i_1)})^{s-1} A_{i_3}^{(i_1,i_2)}\y_j^{(i_2)}}{Z_{i_3}^{1-\m_1}}
\nonumber \\
& &
\times
 \frac{d}{d  \lp \sqrt{\p_1-\p_2}\u_j^{(2,2)} \rp }\lp
\lp
 \mE_{{\mathcal U}_1}  Z_{i_3}^{\m_1}
\rp^{p-1}
           \rp\Bigg . \Bigg ).
 \end{eqnarray}
 Analogously to \cite{Stojnicnflldp25}'s (135)   (see also the first part of \cite{Stojnicnflldp25}'s Section 2.1.2), we have
\begin{eqnarray}\label{eq:proofxlev2genDanal19b1}
\sum_{i_1=1}^{l}\sum_{i_2=1}^{l}\sum_{j=1}^{m} \hspace{-.3in}
& &
\frac{  \beta_{i_1} \sqrt{1-t} }{  \sqrt{\p_1-\p_2} }  T_{2,1,j}^{\p,c}
=
\nonumber \\
 & = & (1-t) \beta^2  \lp \mE_{G,{\mathcal U}_{r+1}}\langle \|\x^{(i_1)}\|_2^2\|\y^{(i_2)}\|_2^2\rangle_{\gamma_{01}^{(r)}} +  (s-1)\mE_{G,{\mathcal U}_{r+1}}\langle \|\x^{(i_1)}\|_2^2(\y^{(p_2)})^T\y^{(i_2)}\rangle_{\gamma_{02}^{(r)}} \rp \nonumber \\
& & - (1-t)s\beta^2(1-\m_1)\mE_{G,{\mathcal U}_{r+1}}\langle \|\x^{(i_1)}\|_2\|\x^{(p_1)}\|_2(\y^{(p_2)})^T\y^{(i_2)} \rangle_{\gamma_{1}^{(r)}}.
\end{eqnarray}
From \cite{Stojnicnflldp25}'s (137) we first have
 \begin{eqnarray}\label{eq:proofxlev2genDanal23}
T_{2,1,j}^{\p,d}
  & = & -s\sqrt{1-t}\sqrt{\p_1-\p_2}(\m_1-\m_2) p \mE_{G,{\mathcal U}_3}\Bigg( \Bigg.
   \prod_{v=r}^{2}\mE_{{\mathcal U}_{v}} \frac{\zeta_{v-1}^{\frac{\m_v}{\m_{v-1}}} } { \zeta_v}
\nonumber \\
& &
\times
  \sum_{i_3=1}^{l}
  \frac{
\lp
 \mE_{{\mathcal U}_1}  Z_{i_3}^{\m_1}
\rp^{p}  }  {   \lp
\sum_{i_3=1}^{l}
\lp
 \mE_{{\mathcal U}_1}  Z_{i_3}^{\m_1}
\rp^p
\rp
 }
\mE_{{\mathcal U}_1}\frac{Z_{i_3}^{\m_1}}{\mE_{{\mathcal U}_1} Z_{i_3}^{\m_1}}
 \frac{(C_{i_3}^{(i_1)})^{s}}{Z_{i_3} }  \frac{A_{i_3}^{(i_1,i_2)}}{C_{i_3}^{(i_1)}}\y_j^{(i_2)} \nonumber \\
& & \times
  \sum_{p_3=1}^{l}
  \frac{
\lp
 \mE_{{\mathcal U}_1}  Z_{p_3}^{\m_1}
\rp^{p}  }  {   \lp
\sum_{p_3=1}^{l}
\lp
 \mE_{{\mathcal U}_1}  Z_{p_3}^{\m_1}
\rp^p
\rp
 }
\lp \mE_{{\mathcal U}_1}  \frac{Z_{p_3}^{\m_1}}{\mE_{{\mathcal U}_1} Z_{p_3}^{\m_1}} \sum_{p_1=1}^{l}  \frac{(C_{p_3}^{(p_1)})^s}{Z_{p_3}}\sum_{p_2=1}^{l}
\frac{A^{(p_1,p_2)}}{C_{p_3}^{(p_1)}}\beta_{p_1}\y_j^{(p_2)}  \rp\Bigg. \Bigg),
\end{eqnarray}
and then analogously to \cite{Stojnicnflldp25}'s (138)
\begin{eqnarray}\label{eq:proofxlev2genDanal24}
 \sum_{i_1=1}^{l}  \sum_{i_2=1}^{l} \sum_{j=1}^{m}  \frac{ \beta_{i_1} \sqrt{1-t} }{  \sqrt{\p_1-\p_2}  }  T_{2,1,j}^{\p,d}
 & = & -(1-t)s\beta^2 (\m_1-\m_2) p
 \mE_{G,{\mathcal U}_{r+1}} \langle \|\x^{(i_1)}\|_2\|\x^{(p_1)}\|_2(\y^{(p_2)})^T\y^{(i_2)} \rangle_{\gamma_{21}^{(r)}}.
 \nonumber \\
\end{eqnarray}
Along the same lines, from \cite{Stojnicnflldp25}'s (140) we first have
 \begin{eqnarray}\label{eq:proofxlev2genDanal23bb1}
T_{2,1,j}^{\p,e}
 & = & s\sqrt{1-t}\sqrt{\p_1-\p_2} \m_1 (p-1)  \mE_{G,{\mathcal U}_{r+1}}\Bigg( \Bigg.
    \prod_{v=r}^{2}\mE_{{\mathcal U}_{v}} \frac{\zeta_{v-1}^{\frac{\m_v}{\m_{v-1}}} } { \zeta_v}
\nonumber \\
& &
\times
  \sum_{i_3=1}^{l}
  \frac{
\lp
 \mE_{{\mathcal U}_1}  Z_{i_3}^{\m_1}
\rp^{p}  }  {   \lp
\sum_{i_3=1}^{l}
\lp
 \mE_{{\mathcal U}_1}  Z_{i_3}^{\m_1}
\rp^p
\rp
 }
\mE_{{\mathcal U}_1}\frac{Z_{i_3}^{\m_1}}{\mE_{{\mathcal U}_1} Z_{i_3}^{\m_1}}
 \frac{(C_{i_3}^{(i_1)})^{s}}{Z_{i_3}}  \frac{A_{i_3}^{(i_1,i_2)}}{C_{i_3}^{(i_1)}}\y_j^{(i_2)} \nonumber \\
& & \times
\lp \mE_{{\mathcal U}_1}  \frac{Z_{i_3}^{\m_1}}{\mE_{{\mathcal U}_1} Z_{i_3}^{\m_1}} \sum_{p_1=1}^{l}  \frac{(C_{i_3}^{(p_1)})^s}{Z_{i_3}}\sum_{p_2=1}^{l}
\frac{A_{i_3}^{(p_1,p_2)}}{C_{i_3}^{(p_1)}}\beta_{p_1}\y_j^{(p_2)}  \rp\Bigg. \Bigg),
\end{eqnarray}
and then analogously to  \cite{Stojnicnflldp25}'s (141)
\begin{eqnarray}\label{eq:proofxlev2genDanal24bb2}
 \sum_{i_1=1}^{l}  \sum_{i_2=1}^{l} \sum_{j=1}^{m} \frac{  \beta_{i_1}  \sqrt{1-t} }{  \sqrt{\p_1-\p_2}  }  T_{2,1,j}^{\p,e}
 & = & (1-t) s\beta^2 \m_1 (p-1)
 \mE_{G,{\mathcal U}_{r+1}} \langle \|\x^{(i_1)}\|_2\|\x^{(p_1)}\|_2(\y^{(p_2)})^T\y^{(i_2)} \rangle_{\gamma_{22}^{(r)}}.
\end{eqnarray}
Finally we then find
\begin{eqnarray}\label{eq:proofxlev2genDanal25}
 \sum_{i_1=1}^{l}  \sum_{i_2=1}^{l} \sum_{j=1}^{m}
 \hspace{-.3in}
 & &
  \frac{    \beta_{i_1} \sqrt{1-t}  }{ \sqrt{\p_1-\p_2 } } T_{2,1,j}^{\p}
=
\nonumber \\
 & = & (1-t)\beta^2
  \lp \mE_{G,{\mathcal U}_{r+1}}\langle \|\x^{(i_1)}\|_2^2\|\y^{(i_2)}\|_2^2\rangle_{\gamma_{01}^{(r)}} +   (s-1)\mE_{G,{\mathcal U}_{r+1}}\langle \|\x^{(i_1)}\|_2^2(\y^{(p_2)})^T\y^{(i_2)}\rangle_{\gamma_{02}^{(r)}} \rp\nonumber \\
& & - (1-t)s\beta^2(1-\m_1)\mE_{G,{\mathcal U}_{r+1}}\langle \|\x^{(i_1)}\|_2\|\x^{(p_1)}\|_2(\y^{(p_2)})^T\y^{(i_2)} \rangle_{\gamma_{1}^{(r)}}
\nonumber \\
 &   &
  -(1-t) s\beta^2(\m_1-\m_2) p \mE_{G,{\mathcal U}_{r+1}} \langle \|\x^{(i_1)}\|_2\|\x^{(p_1)}\|_2(\y^{(p_2)})^T\y^{(i_2)} \rangle_{\gamma_{21}^{(r)}}
\nonumber \\
   &   &
+ (1-t) s\beta^2 \m_1 (p-1) \mE_{G,{\mathcal U}_{r+1}} \langle \|\x^{(i_1)}\|_2\|\x^{(p_1)}\|_2(\y^{(p_2)})^T\y^{(i_2)} \rangle_{\gamma_{22}^{(r)}}.
\end{eqnarray}

\vspace{.1in}

\noindent \underline{\textbf{\emph{Determining $T_{1,3}^{\p,\q}$ on the $r$-th level}}}

From (\ref{eq:rthlev2genanal10g}), we recall on
 \begin{eqnarray}\label{eq:proofxxeq1}
 T_{1,3}^{\p,\q} & = &   \mE_{G,{\mathcal U}_{r+1}} \lp
\zeta_r^{-1}\prod_{v=r}^{2}\mE_{{\mathcal U}_{v}} \zeta_{v-1}^{\frac{\m_v}{\m_{v-1}}-1}
\sum_{i_3=1}^{l} \lp \mE_{{\mathcal U}_1} Z_{i_3}^{\m_1}  \rp^{p-1}
  \mE_{{\mathcal U}_1}\frac{(C_{i_3}^{(i_1)})^{s-1} A_{i_3}^{(i_1,i_2)}  u^{(4,1)}}{Z_{i_3}^{1-\m_1}} \rp.
 \end{eqnarray}
Relying again on the strategy of Section \ref{sec:hand1T11}, we  apply Gaussian integration by parts and write analogously ro (\ref{eq:liftgenCanal21}) (and to \cite{Stojnicnflldp25}'s (33))
 \begin{eqnarray}\label{eq:proofxxliftgenCanal21}
T_{1,3}^{(\p,\q)} \hspace{-.1in} & = & \mE_{G,{\mathcal U}_{r+1}} \lp
\zeta_r^{-1}\prod_{v=r}^{2}\mE_{{\mathcal U}_{v}} \zeta_{v-1}^{\frac{\m_v}{\m_{v-1}}-1}
\sum_{i_3=1}^{l} \lp \mE_{{\mathcal U}_1} Z_{i_3}^{\m_1}  \rp^{p-1}
\mE_{{\mathcal U}_1}  \frac{(C_{i_3}^{(i_1)})^{s-1} A_{i_3}^{(i_1,i_2)}u^{(4,1)}}{Z_{i_3}^{1-\m_1}} \rp \nonumber \\
 & = & \sqrt{\p_0\q_0-\p_1\q_1} \mE_{G,{\mathcal U}_2} \Bigg( \Bigg.
\zeta_r^{-1}\prod_{v=r}^{2}\mE_{{\mathcal U}_{v}} \zeta_{v-1}^{\frac{\m_v}{\m_{v-1}}-1}
\sum_{i_3=1}^{l} \lp \mE_{{\mathcal U}_1} Z_{i_3}^{\m_1}  \rp^{p-1}
\nonumber \\
 & & \times \mE_{{\mathcal U}_1} \lp\frac{d}{d \lp \sqrt{\p_0\q_0-\p_1\q_1} u^{(4,1)} \rp} \lp\frac{(C_{i_3}^{(i_1)})^{s-1} A_{i_3}^{(i_1,i_2)}  } {Z_{i_3}^{1-\m_1}}\rp\rp\Bigg.\Bigg) \nonumber \\
 & = & \sqrt{\p_0\q_0-\p_1\q_1} \mE_{G,{\mathcal U}_2} \Bigg( \Bigg.
  \prod_{v=r}^{2}\mE_{{\mathcal U}_{v}} \frac{\zeta_{v-1}^{\frac{\m_v}{\m_{v-1}}} } { \zeta_v}
   \sum_{i_3=1}^{l}
  \frac{
\lp
 \mE_{{\mathcal U}_1}  Z_{i_3}^{\m_1}
\rp^{p-1}  }  {   \lp
\sum_{i_3=1}^{l}
\lp
 \mE_{{\mathcal U}_1}  Z_{i_3}^{\m_1}
\rp^p
\rp
 }
\nonumber \\
 & & \times \mE_{{\mathcal U}_1} \lp\frac{d}{d \lp \sqrt{\p_0\q_0-\p_1\q_1} u^{(4,1)} \rp} \lp\frac{(C_{i_3}^{(i_1)})^{s-1} A_{i_3}^{(i_1,i_2)}  }{Z_{i_3}^{1-\m_1}}\rp\rp\Bigg.\Bigg).
 \end{eqnarray}
Combining (\ref{eq:proofxxliftgenCanal21}) with   \cite{Stojnicnflldp25}'s (34) gives
 \begin{eqnarray}\label{eq:proofxxliftgenCanal21}
T_{1,3}^{(\p,\q)} \hspace{-.14in} & = & \sqrt{\p_0\q_0-\p_1\q_1} \mE_{G,{\mathcal U}_{r+1}} \Bigg( \Bigg.
  \prod_{v=r}^{2}\mE_{{\mathcal U}_{v}} \frac{\zeta_{v-1}^{\frac{\m_v}{\m_{v-1}}} } { \zeta_v}
   \sum_{i_3=1}^{l}
  \frac{
\lp
 \mE_{{\mathcal U}_1}  Z_{i_3}^{\m_1}
\rp^{p-1}  }  {   \lp
\sum_{i_3=1}^{l}
\lp
 \mE_{{\mathcal U}_1}  Z_{i_3}^{\m_1}
\rp^p
\rp
 }
\nonumber \\
 & & \times \mE_{{\mathcal U}_1} \lp\frac{d}{d \lp \sqrt{\p_0\q_0-\p_1\q_1} u^{(4,1)} \rp} \lp\frac{(C_{i_3}^{(i_1)})^{s-1} A_{i_3}^{(i_1,i_2)}   } {Z_{i_3}^{1-\m_1}}\rp\rp\Bigg.\Bigg).
\nonumber \\
 & = & \sqrt{\p_0\q_0-\p_1\q_1} \mE_{G,{\mathcal U}_{r+1}} \Bigg( \Bigg.
  \prod_{v=r}^{2}\mE_{{\mathcal U}_{v}} \frac{\zeta_{v-1}^{\frac{\m_v}{\m_{v-1}}} } { \zeta_v}
   \sum_{i_3=1}^{l}
  \frac{
\lp
 \mE_{{\mathcal U}_1}  Z_{i_3}^{\m_1}
\rp^{p-1}  }  {   \lp
\sum_{i_3=1}^{l}
\lp
 \mE_{{\mathcal U}_1}  Z_{i_3}^{\m_1}
\rp^p
\rp
 }
\nonumber \\
 & & \times
 \Bigg (\Bigg .
\mE_{{\mathcal U}_1} \lp\frac{(C_{i_3}^{(i_1)})^{s-1}\beta_{i_1}A_{i_3}^{(i_1,i_2)}\|\y^{(i_2)}\|_2\sqrt{t}+A_{i_3}^{(i_1,i_2)}(s-1)(C_{i_3}^{(i_1)})^{s-2}\beta_{i_1}\sum_{p_2=1}^{l}A_{i_3}^{(i_1,p_2)}\|\y^{(p_2)}\|_2\sqrt{t}}{Z_{i_3}^{1-\m_1}} \rp
\nonumber\\
& &
- \mE_{{\mathcal U}_1} \lp
\frac{(C_{i_3}^{(i_1)})^{s-1} A_{i_3}^{(i_1,i_2)}}{Z_{i_3}^{2-\m_1}}
s  \sum_{p_1=1}^{l} (C_{i_3}^{(p_1)})^{s-1}\sum_{p_2=1}^{l}\beta_{p_1}A_{i_3}^{(p_1,p_2)}\|\y^{(p_2)}\|_2\sqrt{t}\rp \Bigg. \Bigg)
 \Bigg.\Bigg)
 \nonumber \\
  & = & \sqrt{\p_0\q_0-\p_1\q_1} \mE_{G,{\mathcal U}_{r+1}} \Bigg( \Bigg.
  \prod_{v=r}^{2}\mE_{{\mathcal U}_{v}} \frac{\zeta_{v-1}^{\frac{\m_v}{\m_{v-1}}} } { \zeta_v}
   \sum_{i_3=1}^{l}
  \frac{
\lp
 \mE_{{\mathcal U}_1}  Z_{i_3}^{\m_1}
\rp^{p-1}  }  {   \lp
\sum_{i_3=1}^{l}
\lp
 \mE_{{\mathcal U}_1}  Z_{i_3}^{\m_1}
\rp^p
\rp
 }
\nonumber \\
 & & \times
 \Bigg (\Bigg .
\mE_{{\mathcal U}_1} \lp\frac{(C_{i_3}^{(i_1)})^{s-1}\beta_{i_1}A_{i_3}^{(i_1,i_2)}\|\y^{(i_2)}\|_2\sqrt{t}+A_{i_3}^{(i_1,i_2)}(s-1)(C_{i_3}^{(i_1)})^{s-2}\beta_{i_1}\sum_{p_2=1}^{l}A_{i_3}^{(i_1,p_2)}\|\y^{(p_2)}\|_2\sqrt{t}}{Z_{i_3}^{1-\m_1}} \rp
\nonumber\\
& &
- \mE_{{\mathcal U}_1} \lp
\frac{(C_{i_3}^{(i_1)})^{s-1} A_{i_3}^{(i_1,i_2)}}{Z_{i_3}^{2-\m_1}}
s  \sum_{p_1=1}^{l} (C_{i_3}^{(p_1)})^{s-1}\sum_{p_2=1}^{l}\beta_{p_1}A_{i_3}^{(p_1,p_2)}\|\y^{(p_2)}\|_2\sqrt{t}\rp \Bigg. \Bigg)
 \Bigg.\Bigg).
 \end{eqnarray}
One then has the following analogue to (\ref{eq:liftgenCanal21b})
\begin{eqnarray}\label{eq:proofxxliftgenCanal21b}
\sum_{i_1=1}^{l}\sum_{i_2=1}^{l}
\hspace{-.3in} & &
 \frac{  \beta_{i_1}\|\y^{(i_2)}\|_2 \sqrt{t}}{\sqrt{\p_0\q_0-\p_1\q_1}}T_{1,3}^{(\p,\q)}
=
\nonumber \\
& = & t\beta^2 \Bigg( \Bigg. \mE_{G,{\mathcal U}_{r+1}}\langle \|\x^{(i_1)}\|_2^2\|\y^{(i_2)}\|_2^2\rangle_{\gamma_{01}^{(r)}}
 +   (s-1)\mE_{G,{\mathcal U}_{r+1}}\langle \|\x^{(i_1)}\|_2^2 \|\y^{(i_2)}\|_2\|\y^{(p_2)}\|_2\rangle_{\gamma_{02}^{(r)}}\Bigg.\Bigg) \nonumber \\
& & - ts\beta^2(1-\m_1)\mE_{G,{\mathcal U}_{r+1}}\langle \|\x^{(i_1)}\|_2\|\x^{(p_`)}\|_2\|\y^{(i_2)}\|_2\|\y^{(p_2)}\|_2 \rangle_{\gamma_{1}^{(r)}}.
\end{eqnarray}

\vspace{.1in}

\noindent \underline{\textbf{\emph{Determining $T_{2,3}^{\p,\q}$ on the $r$-th level}}}

From (\ref{eq:rthlev2genanal10g}), we recall on
 \begin{eqnarray}\label{eq:proofxxxeq1}
 T_{2,3}^{\p,\q} & = &   \mE_{G,{\mathcal U}_{r+1}} \lp
\zeta_r^{-1}\prod_{v=r}^{2}\mE_{{\mathcal U}_{v}} \zeta_{v-1}^{\frac{\m_v}{\m_{v-1}}-1}
\sum_{i_3=1}^{l} \lp \mE_{{\mathcal U}_1} Z_{i_3}^{\m_1}  \rp^{p-1}
  \mE_{{\mathcal U}_1}\frac{(C_{i_3}^{(i_1)})^{s-1} A_{i_3}^{(i_1,i_2)}  u^{(4,2)}}{Z_{i_3}^{1-\m_1}} \rp.
 \end{eqnarray}
We proceed by following the strategy of \cite{Stojnicnflldp25}'s Section 3.1.2 (part 3). Gaussian integration by parts gives
\begin{eqnarray}\label{eq:proofxxxlev2genFanal21}
T_{2,3}^{\p,\q} & = &   \mE_{G,{\mathcal U}_{r+1}} \lp
\zeta_r^{-1}\prod_{v=r}^{2}\mE_{{\mathcal U}_{v}} \zeta_{v-1}^{\frac{\m_v}{\m_{v-1}}-1}
\sum_{i_3=1}^{l} \lp \mE_{{\mathcal U}_1} Z_{i_3}^{\m_1}  \rp^{p-1}
\mE_{{\mathcal U}_1}\frac{(C_{i_3}^{(i_1)})^{s-1} A_{i_3}^{(i_1,i_2)} u^{(4,2)}}{Z_{i_3}^{1-\m_1}} \rp
\nonumber  \\
& = & \mE_{G,{\mathcal U}_{r+1}} \Bigg( \Bigg.
\zeta_r^{-1}\prod_{v=r}^{3}\mE_{{\mathcal U}_{v}} \zeta_{v-1}^{\frac{\m_v}{\m_{v-1}}-1}
\sum_{i_3=1}^{l} \lp \mE_{{\mathcal U}_1} Z_{i_3}^{\m_1}  \rp^{p-1}
\nonumber \\
& & \times \mE_{{\mathcal U}_2} \lp\mE_{{\mathcal U}_2} (u^{(4,2)}u^{(4,2)})\lp\frac{d}{du^{(4,2)}} \lp\frac{(C_{i_3}^{(i_1)})^{s-1} A_{i_3}^{(i_1,i_2)}}{Z_{i_3}^{1-\m_1}
\zeta_1^{1-\frac{\m_2}{\m_1}} \lp
 \mE_{{\mathcal U}_1}  Z_{i_3}^{\m_1}
\rp^{1-p}
 }
\rp \rp\rp\Bigg. \Bigg) \nonumber \\
& = & \sqrt{\p_1\q_1-\p_2\q_2}  \mE_{G,{\mathcal U}_{r+1}}  \Bigg ( \Bigg.
\zeta_r^{-1}\prod_{v=r}^{2}\mE_{{\mathcal U}_{v}} \zeta_{v-1}^{\frac{\m_v}{\m_{v-1}}-1}
\sum_{i_3=1}^{l} \lp \mE_{{\mathcal U}_1} Z_{i_3}^{\m_1}  \rp^{p-1}
\nonumber \\
& &
\times
\lp
\frac{d}{d \lp \sqrt{\p_1\q_1-\p_2\q_2}   u^{(4,2)} \rp  } \lp\frac{(C_{i_3}^{(i_1)})^{s-1} A_{i_3}^{(i_1,i_2)}}{Z_{i_3}^{1-\m_1}}
\rp
\rp
 \Bigg ) \Bigg.
\nonumber \\
& & + \sqrt{\p_1\q_1-\p_2\q_2}   \mE_{G,{\mathcal U}_{r+1}}  \Bigg ( \Bigg.
\zeta_r^{-1}\prod_{v=r}^{3}\mE_{{\mathcal U}_{v}} \zeta_{v-1}^{\frac{\m_v}{\m_{v-1}}-1}
\sum_{i_3=1}^{l} \lp \mE_{{\mathcal U}_1} Z_{i_3}^{\m_1}  \rp^{p-1}
\lp   \frac{(C_{i_3}^{(i_1)})^{s-1} A_{i_3}^{(i_1,i_2)}}{Z_{i_3}^{1-\m_1}}    \rp
\nonumber \\
& &
\times
 \Bigg ( \Bigg.
 \frac{d}{d \lp \sqrt{\p_1\q_1-\p_2\q_2}  u^{(4,2)} \rp  }
  \Bigg ( \Bigg.
\zeta_1^{\frac{\m_2}{\m_1} -1 }
 \Bigg ) \Bigg.
 \Bigg ) \Bigg.
 \Bigg ) \Bigg.
\nonumber \\
& & +  \sqrt{\p_1\q_1-\p_2\q_2}  \mE_{G,{\mathcal U}_{r+1}} \Bigg ( \Bigg.
\zeta_r^{-1}\prod_{v=r}^{3}\mE_{{\mathcal U}_{v}} \zeta_{v-1}^{\frac{\m_v}{\m_{v-1}}-1}
\sum_{i_3=1}^{l}
\lp   \frac{(C_{i_3}^{(i_1)})^{s-1} A_{i_3}^{(i_1,i_2)}}{Z_{i_3}^{1-\m_1}}    \rp
\nonumber \\
& &
\times
\lp\frac{d}{d \lp  \sqrt{\p_1\q_1-\p_2\q_2}   u^{(4,2)} \rp } \lp
 \lp
 \mE_{{\mathcal U}_1}  Z_{i_3}^{\m_1}
\rp^{p-1}
\rp
\rp
\Bigg .\Bigg ).\nonumber \\
\end{eqnarray}
Writing it in a more convenient way we also have
\begin{eqnarray}\label{eq:proofxxxlev2genFanal22}
T_{2,3}^{\p.\q} & = & T_{2,3}^{\p.\q,c} +T_{2,3}^{\p.\q,d} +T_{2,3}^{\p.\q,e},
\end{eqnarray}
where
\begin{eqnarray}\label{eq:lev2genFanal23}
T_{2,3}^{\p.\q,c} & = & \sqrt{\p_1\q_1-\p_2\q_2}  \mE_{G,{\mathcal U}_{r+1}}  \Bigg ( \Bigg.
\zeta_r^{-1}\prod_{v=r}^{2}\mE_{{\mathcal U}_{v}} \zeta_{v-1}^{\frac{\m_v}{\m_{v-1}}-1}
\sum_{i_3=1}^{l} \lp \mE_{{\mathcal U}_1} Z_{i_3}^{\m_1}  \rp^{p-1}
\nonumber \\
& &
\times
\lp
\frac{d}{d \lp \sqrt{\p_1\q_1-\p_2\q_2}   u^{(4,2)} \rp  } \lp\frac{(C_{i_3}^{(i_1)})^{s-1} A_{i_3}^{(i_1,i_2)}}{Z_{i_3}^{1-\m_1}}
\rp
\rp
 \Bigg ) \Bigg.
\nonumber \\
 T_{2,3}^{\p.\q,d} & = & \sqrt{\p_1\q_1-\p_2\q_2}   \mE_{G,{\mathcal U}_{r+1}}  \Bigg ( \Bigg.
\zeta_r^{-1}\prod_{v=r}^{3}\mE_{{\mathcal U}_{v}} \zeta_{v-1}^{\frac{\m_v}{\m_{v-1}}-1}
\sum_{i_3=1}^{l} \lp \mE_{{\mathcal U}_1} Z_{i_3}^{\m_1}  \rp^{p-1}
\lp   \frac{(C_{i_3}^{(i_1)})^{s-1} A_{i_3}^{(i_1,i_2)}}{Z_{i_3}^{1-\m_1}}    \rp
\nonumber \\
& &
\times
 \Bigg ( \Bigg.
 \frac{d}{d \lp \sqrt{\p_1\q_1-\p_2\q_2}  u^{(4,2)} \rp  }
  \Bigg ( \Bigg.
\zeta_1^{\frac{\m_2}{\m_1} -1 }
 \Bigg ) \Bigg.
 \Bigg ) \Bigg.
 \Bigg ) \Bigg.
\nonumber \\
 T_{2,3}^{\p.\q,e} & = &
\sqrt{\p_1\q_1-\p_2\q_2}  \mE_{G,{\mathcal U}_{r+1}} \Bigg ( \Bigg.
\zeta_r^{-1}\prod_{v=r}^{3}\mE_{{\mathcal U}_{v}} \zeta_{v-1}^{\frac{\m_v}{\m_{v-1}}-1}
\sum_{i_3=1}^{l}
\lp   \frac{(C_{i_3}^{(i_1)})^{s-1} A_{i_3}^{(i_1,i_2)}}{Z_{i_3}^{1-\m_1}}    \rp
\nonumber \\
& &
\times
\lp\frac{d}{d \lp  \sqrt{\p_1\q_1-\p_2\q_2}   u^{(4,2)} \rp } \lp
 \lp
 \mE_{{\mathcal U}_1}  Z_{i_3}^{\m_1}
\rp^{p-1}
\rp
\rp
\Bigg .\Bigg ). \nonumber \\
 \end{eqnarray}
We then have analogously to \cite{Stojnicnflldp25}'s (160) (see also the third part of \cite{Stojnicnflldp25}'s Section 2.1.2)
\begin{eqnarray}\label{eq:proofxxxlev2genFanal23b}
\sum_{i_1=1}^{l}\sum_{i_2=1}^{l}  \frac{ \beta_{i_1}\|\y^{(i_2)}\|_2 \sqrt{t} }{\sqrt{\p_1\q_1-\p_2\q_2}   }     T_{2,3}^{\p,\q,c}   & = &
 \beta^2 \Bigg(\Bigg. \mE_{G,{\mathcal U}_{r+1}}\langle \|\x^{(i_1)}\|_2^2\|\y^{(i_2)}\|_2^2\rangle_{\gamma_{01}^{(r)}} \nonumber \\
& & +   (s-1)\mE_{G,{\mathcal U}_{r+1}}\langle \|\x^{(i_1)}\|_2^2 \|\y^{(i_2)}\|_2\|\y^{(p_2)}\|_2\rangle_{\gamma_{02}^{(r)}} \Bigg.\Bigg) \nonumber \\
& & -   s\beta^2(1-\m_1)\mE_{G,{\mathcal U}_{r+1}}\langle \|\x^{(i_1)}\|_2\|\x^{(p_`)}\|_2\|\y^{(i_2)}\|_2\|\y^{(p_2)}\|_2 \rangle_{\gamma_{1}^{(r)}}.
\end{eqnarray}
Utilizing \cite{Stojnicnflldp25}'s (163) we further  have
  \begin{eqnarray}\label{eq:proofxxxlev2genFanal27}
T_{2,3}^{\p,\q,d}
& = & -s\sqrt{t}\sqrt{\p_1\q_1-\p_2\q_2}(\m_1-\m_2) p \mE_{G,{\mathcal U}_3} \Bigg( \Bigg.
  \prod_{v=r}^{2}\mE_{{\mathcal U}_{v}} \frac{\zeta_{v-1}^{\frac{\m_v}{\m_{v-1}}} } { \zeta_v}
\nonumber \\
& &
\times
\sum_{i_3=1}^{l}
  \frac{
\lp
 \mE_{{\mathcal U}_1}  Z_{i_3}^{\m_1}
\rp^{p}  }{   \lp
\sum_{i_3=1}^{l}
\lp
 \mE_{{\mathcal U}_1}  Z_{i_3}^{\m_1}
\rp^p
\rp
 }
\mE_{{\mathcal U}_1} \frac{Z_{i_3}^{\m_1}}{\mE_{{\mathcal U}_1} Z_{i_3}^{\m_1}} \frac{(C_{i_3}^{(i_1)})^s}{Z_{i_3}}\frac{A_{i_3}^{(i_1,i_2)}}{(C_{i_3}^{(i_1)})}\nonumber \\
& & \times
\sum_{p_3=1}^{l}
  \frac{
\lp
 \mE_{{\mathcal U}_1}  Z_{p_3}^{\m_1}
\rp^{p}  }{   \lp
\sum_{p_3=1}^{l}
\lp
 \mE_{{\mathcal U}_1}  Z_{p_3}^{\m_1}
\rp^p
\rp
 }
\mE_{{\mathcal U}_1} \frac{Z_{p_3}^{\m_1}}{\mE_{{\mathcal U}_1} Z_{p_3}^{\m_1}} \sum_{p_1=1}^{l} \frac{(C_{p_3}^{(p_1)})^{s}}{Z_{p_3}}\sum_{p_2=1}^{l}
\frac{A_{p_3}^{(p_1,p_2)}}{(C^{(p_1)})}\beta_{p_1}\|\y^{(p_2)}\|_2\Bigg. \Bigg),
\end{eqnarray}
and consequently
\begin{eqnarray}\label{eq:proofxxxlev2genFanal28}
\sum_{i_1=1}^{l}\sum_{i_2=1}^{l}\frac{  \beta_{i_1}\|\y^{(i_2)}\|_2 \sqrt{t}  }{\sqrt{\p_1\q_2-\p_2\q_2}  }   T_{2,3}^{\p,\q,d}
 & = & -ts\beta^2 (\m_1-\m_2) p \mE_{G,{\mathcal U}_{r+1}} \langle\|\x^{(i_2)}\|_2\|\x^{(p_2)}\|_2\|\y^{(i_2)}\|_2\|\y^{(p_2)}\rangle_{\gamma_{21}^{(r)}}.\nonumber \\
\end{eqnarray}
Utilizing \cite{Stojnicnflldp25}'s (166) we first find
\begin{eqnarray}\label{eq:proofxxxlev2genFanal27bb2}
T_{2,3}^{\p,\q,e}
& = & s\sqrt{t}\sqrt{ \p_1\q_1-\p_2\q_2 } \m_1 (p-1) \mE_{G,{\mathcal U}_{r+1}} \Bigg( \Bigg.
  \prod_{v=r}^{2}\mE_{{\mathcal U}_{v}} \frac{\zeta_{v-1}^{\frac{\m_v}{\m_{v-1}}} } { \zeta_v}
\nonumber \\
& &
\times
\sum_{i_3=1}^{l}
  \frac{
\lp
 \mE_{{\mathcal U}_1}  Z_{i_3}^{\m_1}
\rp^{p}  }{   \lp
\sum_{i_3=1}^{l}
\lp
 \mE_{{\mathcal U}_1}  Z_{i_3}^{\m_1}
\rp^p
\rp
 }
\mE_{{\mathcal U}_1} \frac{Z_{i_3}^{\m_1}}{\mE_{{\mathcal U}_1} Z_{i_3}^{\m_1}} \frac{(C_{i_3}^{(i_1)})^s}{Z_{i_3}}\frac{A_{i_3}^{(i_1,i_2)}}{(C_{i_3}^{(i_1)})}\nonumber \\
& & \times
\mE_{{\mathcal U}_1} \frac{Z_{i_3}^{\m_1}}{\mE_{{\mathcal U}_1} Z_{i_3}^{\m_1}} \sum_{p_1=1}^{l} \frac{(C_{i_3}^{(p_1)})^{s}}{Z_{i_3}}\sum_{p_2=1}^{l}
\frac{A_{i_3}^{(p_1,p_2)}}{(C_{i_3}^{(p_1)})}\beta_{p_1}\|\y^{(p_2)}\|_2\Bigg. \Bigg),
\end{eqnarray}
and then
\begin{eqnarray}\label{eq:proofxxxlev2genFanal28bb3}
\sum_{i_1=1}^{l}\sum_{i_2=1}^{l} \frac{ \beta_{i_1}\|\y^{(i_2)}\|_2 \sqrt{t} }{ \sqrt{\p_1\q_1-\p_2\q_2}  }    T_{2,3}^{\p,\q,e}
 & = & ts\beta^2 \m_1 (p-1) \mE_{G,{\mathcal U}_{r+1}} \langle\|\x^{(i_2)}\|_2\|\x^{(p_2)}\|_2\|\y^{(i_2)}\|_2\|\y^{(p_2)}\rangle_{\gamma_{22}^{(r)}}.
\end{eqnarray}
Combining  (\ref{eq:proofxxxlev2genFanal22}), (\ref{eq:proofxxxlev2genFanal23b}), (\ref{eq:proofxxxlev2genFanal28}), and (\ref{eq:proofxxxlev2genFanal28bb3}) we then obtain the following analogue to (\ref{eq:genFanal29})
 \begin{eqnarray}\label{eq:proofxxxlev2genFanal29}
\sum_{i_1=1}^{l}\sum_{i_2=1}^{l}  \frac{  \beta_{i_1}\|\y^{(i_2)}\|_2 \sqrt{t} } { \sqrt{\p_1\q_1-\p_2\q_2}  }    T_{2,3}^{\p,\q}
 & = &
 t\beta^2 \Bigg( \Bigg. \mE_{G,{\mathcal U}_{r+1}}\langle \|\x^{(i_1)}\|_2^2\|\y^{(i_2)}\|_2^2\rangle_{\gamma_{01}^{(r)}} \nonumber \\
& & +  (s-1)\mE_{G,{\mathcal U}_{r+1}}\langle \|\x^{(i_1)}\|_2^2 \|\y^{(i_2)}\|_2\|\y^{(p_2)}\|_2\rangle_{\gamma_{02}^{(r)}}\Bigg.\Bigg) \nonumber \\
& & - t s\beta^2(1-\m_1)\mE_{G,{\mathcal U}_{r+1}}\langle \|\x^{(i_1)}\|_2\|\x^{(p_`)}\|_2\|\y^{(i_2)}\|_2\|\y^{(p_2)}\|_2 \rangle_{\gamma_{1}^{(r)}} \nonumber \\
&  & -ts\beta^2 (\m_1-\m_2) p \mE_{G,{\mathcal U}_{r+1}} \langle\|\x^{(i_2)}\|_2\|\x^{(p_2)}\|_2\|\y^{(i_2)}\|_2\|\y^{(p_2)} \|_2 \rangle_{\gamma_{21}^{(r)}}\nonumber \\
&  & +ts\beta^2  \m_1 (p-1) \mE_{G,{\mathcal U}_{r+1}} \langle\|\x^{(i_2)}\|_2\|\x^{(p_2)}\|_2\|\y^{(i_2)}\|_2\|\y^{(p_2)} \|_2  \rangle_{\gamma_{22}^{(r)}}.
\end{eqnarray}
Finally, combining (\ref{eq:rthlev2genanal10e})-(\ref{eq:rthlev2genanal10f}) (for $k_1=1$) with  (\ref{eq:proofliftgenAanal19i}), (\ref{eq:proofxlev2genDanal25}), (\ref{eq:proofxxliftgenCanal21b}), and (\ref{eq:proofxxxlev2genFanal29}) we obtain  that $\frac{d\psi(t)}{d\p_1}$ is indeed as stated in (\ref{eq:thm3eq41aa0}) in Theorem \ref{thm:thm3}.

As mentioned earlier, handling $T_{1,2}^{\q}$ and $T_{2,2}^{\q}$ is done in a completely analogous manner and we skip redoing the whole procedure  once again. Instead we note that the final results are
\begin{eqnarray}\label{eq:proofxxxxliftgenBanal20b}
\sum_{i_1=1}^{l}\sum_{i_2=1}^{l}  \frac{\beta_{i_1}\|\y^{(i_2)}\|_2\sqrt{1-t}}{\sqrt{\q_0-\q_1}}T_{1,2}^{\q} & = & (1-t)\beta^2
\Bigg( \Bigg.\mE_{G,{\mathcal U}_{r+1}}\langle \|\x^{(i_1)}\|_2^2\|\y^{(i_2)}\|_2^2\rangle_{\gamma_{01}^{(r)}} \nonumber \\
& & +   (s-1)\mE_{G,{\mathcal U}_{r+1}}\langle \|\x^{(i_1)}\|_2^2 \|\y^{(i_2)}\|_2\|\y^{(p_2)}\|_2\rangle_{\gamma_{02}^{(r)}}\Bigg.\Bigg)  \nonumber \\
& & - (1-t)s\beta^2(1-\m_1)\mE_{G,{\mathcal U}_{r+1}}\langle (\x^{(p_1)})^T\x^{(i_1)}\|\y^{(i_2)}\|_2\|\y^{(p_2)}\|_2 \rangle_{\gamma_{1}^{(r)}},
\nonumber \\
\end{eqnarray}
and
 \begin{eqnarray}\label{eq:proofxxxxlev2genEanal25}
\sum_{i_1=1}^{l}\sum_{i_2=1}^{l}   \frac{  \beta_{i_1}\|\y^{(i_2)}\|_2 \sqrt{1-t} }  { \sqrt{\q_1 - \q_2} }   T_{2,2}^{\q}
& = &  (1-t) \beta^2 \Bigg( \Bigg. \mE_{G,{\mathcal U}_{r+1}}\langle \|\x^{(i_1)}\|_2^2\|\y^{(i_2)}\|_2^2\rangle_{\gamma_{01}^{(r)}} \nonumber \\
& & +   (s-1)\mE_{G,{\mathcal U}_{r+1}}\langle \|\x^{(i_1)}\|_2^2 \|\y^{(i_2)}\|_2\|\y^{(p_2)}\|_2\rangle_{\gamma_{02}^{(r)}}\Bigg.\Bigg) \nonumber \\
& & - (1-t)s\beta^2(1-\m_1)\mE_{G,{\mathcal U}_{r+1}}\langle (\x^{(p_1)})^T\x^{(i_1)}\|\y^{(i_2)}\|_2\|\y^{(p_2)}\|_2 \rangle_{\gamma_{1}^{(r)}} \nonumber \\
&  & -(1-t)s\beta^2 (\m_1-\m_2) p  \mE_{G,{\mathcal U}_{r+1}} \langle \|\y^{(i_2)}\|_2\|\y^{(p_2)}\|_2(\x^{(i_1)})^T\x^{(p_1)}\rangle_{\gamma_{21}^{(r)}}
\nonumber \\
&  &  + (1-t)s\beta^2  \m_1 (p-1) \mE_{G,{\mathcal U}_{r+1}} \langle \|\y^{(i_2)}\|_2\|\y^{(p_2)}\|_2(\x^{(i_1)})^T\x^{(p_1)}  \rangle_{\gamma_{22}^{(r)}}. \nonumber \\
\end{eqnarray}
Combining (\ref{eq:rthlev2genanal10e})-(\ref{eq:rthlev2genanal10f}) (for $k_1=1$) with  (\ref{eq:proofxxxxliftgenBanal20b}), (\ref{eq:proofxxxxlev2genEanal25}), (\ref{eq:proofxxliftgenCanal21b}), and (\ref{eq:proofxxxlev2genFanal29}) one then finds that $\frac{d\psi(t)}{d\q_1}$ is also as stated in (\ref{eq:thm3eq41aa0}) in Theorem \ref{thm:thm3}.
 \end{proof}

\section{Interpolating path stationarization}
\label{sec:statalongpath}

We first recall on a fundamental result from \cite{Stojnicnflldp25}.
\begin{theorem}
\label{thm:thm4}
Assume the setup of Theorem \ref{thm:thm3}. Let also
\begin{eqnarray}\label{eq:thm3eq5}
 \phi_{k_1}^{(r)} & = &
-s(\m_{k_1-1}-\m_{k_1}) \omega(k_1;p) \nonumber \\
&  & \times
\mE_{G,{\mathcal U}_{r+1}} \langle (\p_{k_1-1}\|\x^{(i_1)}\|_2\|\x^{(p_1)}\|_2 -(\x^{(p_1)})^T\x^{(i_1)})(\q_{k_1-1}\|\y^{(i_2)}\|_2\|\y^{(p_2)}\|_2 -(\y^{(p_2)})^T\y^{(i_2)})\rangle_{\gamma_{k_1}^{(r)}} \nonumber \\
 \phi_{22}^{(r)} & = &
-s\m_1 (1-p) \nonumber \\
&  & \times
\mE_{G,{\mathcal U}_{r+1}} \langle (\p_{1}\|\x^{(i_1)}\|_2\|\x^{(p_1)}\|_2 -(\x^{(p_1)})^T\x^{(i_1)})(\q_{1}\|\y^{(i_2)}\|_2\|\y^{(p_2)}\|_2 -(\y^{(p_2)})^T\y^{(i_2)})\rangle_{\gamma_{22}^{(r)}} \nonumber \\
 \phi_{01}^{(r)} & = & (1-\p_0)(1-\q_0)\mE_{G,{\mathcal U}_{r+1}}\langle \|\x^{(i_1)}\|_2^2\|\y^{(i_2)}\|_2^2\rangle_{\gamma_{01}^{(r)}} \nonumber\\
\phi_{02}^{(r)} & = & (s-1)(1-\p_0)\mE_{G,{\mathcal U}_{r+1}}\left\langle \|\x^{(i_1)}\|_2^2 \lp\q_0\|\y^{(i_2)}\|_2\|\y^{(p_2)}\|_2-(\y^{(p_2)})^T\y^{(i_2)}\rp\right\rangle_{\gamma_{02}^{(r)}}, \end{eqnarray}
where
\begin{eqnarray}\label{eq:thm3rthlev2genanal38aa0}
\omega(x;p) \triangleq \begin{cases}
                         1, & \mbox{if } x=1 \\
                         p, & \mbox{otherwise}.
                       \end{cases},
\end{eqnarray}
Then
\begin{eqnarray}\label{eq:thm3eq6}
\frac{d\psi(t)}{dt}  & = &       \frac{\mbox{sign}(s)\beta^2}{2\sqrt{n}} \lp  \lp\sum_{k_1=1}^{r+1} \phi_{k_1}^{(r)}\rp +\phi_{22}^{(r)}+\phi_{01}^{(r)}+\phi_{02}^{(r)}\rp.
 \end{eqnarray}
It particular, choosing $\p_0=\q_0=1$, one also has
\begin{eqnarray}\label{eq:rthlev2genanal43}
\frac{d\psi(t)}{dt}  & = &       \frac{\mbox{sign}(s)\beta^2}{2\sqrt{n}}  \lp \sum_{k_1=1}^{r+1} \phi_{k_1}^{(r)}  +\phi_{22}^{(r)}   \rp.
 \end{eqnarray}
 \end{theorem}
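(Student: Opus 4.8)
The plan is to establish Theorem \ref{thm:thm4} by differentiating $\psi(t)$ directly in $t$ and running the very same Gaussian-integration-by-parts machinery already deployed for the $\p$- and $\q$-derivatives in Theorem \ref{thm:thm3}. Since the interpolating function $\psi(t)$ in \eqref{eq:thm3eq1}, together with all the measures $\gamma_{\cdot}^{(r)}$ of \eqref{eq:thm3eq3}, is literally shared with \cite{Stojnicnflldp25}, one could simply cite the corresponding statement there; for completeness I would instead re-derive it along the established route. First I would apply the chain rule through $D_0^{(i_1,i_2,i_3)}$ and record
\begin{equation}\label{eq:plandDdt}
\frac{dD_0^{(i_1,i_2,i_3)}}{dt} = \frac{1}{2\sqrt{t}}\lp (\y^{(i_2)})^T G\x^{(i_1)} + \|\x^{(i_1)}\|_2\|\y^{(i_2)}\|_2 \sum_{k=1}^{r+1} a_k u^{(4,k)} \rp - \frac{1}{2\sqrt{1-t}}\lp \|\x^{(i_1)}\|_2 (\y^{(i_2)})^T \sum_{k=1}^{r+1} b_k \u^{(2,k)} + \|\y^{(i_2)}\|_2 \lp \sum_{k=1}^{r+1} c_k\h^{(k)}\rp^T \x^{(i_1)} \rp,
\end{equation}
so that $\frac{d\psi}{dt}$ decomposes into four families of terms indexed by the underlying Gaussian source.

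Three of these families --- the $u^{(4,k)}$-family, the $\u^{(2,k)}$-family, and the $\h^{(k)}$-family --- are structurally identical to the sequences $\lp T_{k,3}^{(\p,\q)}\rp$, $\lp T_{k,1,j}^{\p}\rp$, and $\lp T_{k,2}^{\q}\rp$ already determined in the proof of Theorem \ref{thm:thm3} and in Section \ref{sec:scaledcanout}, so their handling is inherited verbatim. The only genuinely new ingredient is the $G$-family arising from the bilinear term $(\y^{(i_2)})^T G\x^{(i_1)}$, which is $\p,\q$-independent and therefore absent from Theorem \ref{thm:thm3}. I would treat it by the same Gaussian integration by parts, now with respect to $G$: after the normalization \eqref{eq:rthlev2genanal4} the covariance $\mE\lp \bar{\u}_j^{(i_1,1)}\bar{\u}_j^{(p_1,1)}\rp = (\x^{(i_1)})^T\x^{(p_1)}/\lp\|\x^{(i_1)}\|_2\|\x^{(p_1)}\|_2\rp$ produces the genuinely bilinear overlap $(\x^{(i_1)})^T\x^{(p_1)}$, propagated across lifting levels by the identical \emph{successive scaling and reweighted new terms} mechanism of \eqref{eq:rthlev2genanal11}--\eqref{eq:rthlev2genanal27}.

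The crux of the assembly is the algebraic identity obtained by collecting the four families at each level $k_1$: the telescoping of $\sum_k a_k^2$, $\sum_k b_k^2$, $\sum_k c_k^2$ through the recursion, combined with the explicit $\tfrac{1}{2\sqrt{t}}$ and $\tfrac{1}{2\sqrt{1-t}}$ prefactors of \eqref{eq:plandDdt}, reproduces precisely
\begin{equation}\label{eq:plancovid}
\frac{d}{dt}\mE\lp D_0^{(i_1,i_2,i_3)}D_0^{(p_1,p_2,p_3)}\rp = \lp \p_{k_1-1}\|\x^{(i_1)}\|_2\|\x^{(p_1)}\|_2 - (\x^{(i_1)})^T\x^{(p_1)}\rp\lp \q_{k_1-1}\|\y^{(i_2)}\|_2\|\y^{(p_2)}\|_2 - (\y^{(i_2)})^T\y^{(p_2)}\rp,
\end{equation}
which is exactly the bracketed quadratic form defining $\phi_{k_1}^{(r)}$ in \eqref{eq:thm3eq5}. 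The second-order integration-by-parts at level $k_1$ supplies the measure $\gamma_{k_1}^{(r)}$ and the combinatorial weight $-s(\m_{k_1-1}-\m_{k_1})\omega(k_1;p)$, the $\omega$-switch at $k_1=1$ reflecting that the outermost exponent is $p$ rather than a consecutive $\m$-difference. The remaining pieces are the diagonal ($i_1=p_1,i_2=p_2,i_3=p_3$) self-overlap contributions, which --- carrying the residual variance factors $(1-\p_0)$ and $(1-\q_0)$ against the full unit normalization --- produce $\phi_{01}^{(r)}$ and $\phi_{02}^{(r)}$, together with the level-one specific term $\phi_{22}^{(r)}$ coming from the $\gamma_{22}^{(r)}$ structure; summing everything yields \eqref{eq:thm3eq6}. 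Specializing $\p_0=\q_0=1$ annihilates the prefactors $(1-\p_0)$, $(1-\q_0)$ in $\phi_{01}^{(r)}$, $\phi_{02}^{(r)}$, leaving \eqref{eq:rthlev2genanal43}.

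The main obstacle I anticipate is the bookkeeping rather than any new analytic idea: one must carefully track how the four families combine \emph{level by level} into the single product \eqref{eq:plancovid} with the correct $\p_{k_1-1},\q_{k_1-1}$ at each level, and correctly separate and attribute the diagonal self-terms to $\phi_{01}^{(r)},\phi_{02}^{(r)},\phi_{22}^{(r)}$ while matching the $\m$-difference versus $p$ weights dictated by $\omega(k_1;p)$. Because all of this routine-but-delicate telescoping has already been executed for the $\p,\q$-derivatives in Sections \ref{sec:gencon}--\ref{sec:rthall} and in \cite{Stojnicnflldp25}, the cleanest exposition is to reduce the $t$-derivative to those computations plus the single additional $G$-family, and then invoke \eqref{eq:plancovid} to collapse the four families into $\phi_{k_1}^{(r)}$.
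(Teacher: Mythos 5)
You cannot be checked against an in-paper argument here, because the paper does not prove Theorem \ref{thm:thm4}: its proof is the single line ``Presented in \cite{Stojnicnflldp25}'', the theorem being imported from the companion paper as a known result. Judged as a reconstruction of that external derivation, your plan is sound and follows exactly the machinery this paper does execute for the $\p$- and $\q$-derivatives in Sections \ref{sec:gencon} and \ref{sec:rthlev}: differentiate $D_0^{(i_1,i_2,i_3)}$ in $t$, integrate by parts per Gaussian source, and collapse onto the measures of (\ref{eq:thm3eq3}). You correctly single out the one genuinely new ingredient relative to Theorem \ref{thm:thm3} --- the $G$-family, absent there because the bilinear term carries no $\p,\q$ dependence --- and your factorization identity is the right crux: with the telescoped sums $\sum_{k\geq k_1}a_k^2=\p_{k_1-1}\q_{k_1-1}$, $\sum_{k\geq k_1}b_k^2=\p_{k_1-1}$, $\sum_{k\geq k_1}c_k^2=\q_{k_1-1}$, the $1/(2\sqrt{t})$ and $-1/(2\sqrt{1-t})$ prefactors cancel the $t$ and $1-t$ covariance weights, and the four families combine into the product $(\p_{k_1-1}\|\x^{(i_1)}\|_2\|\x^{(p_1)}\|_2-(\x^{(p_1)})^T\x^{(i_1)})(\q_{k_1-1}\|\y^{(i_2)}\|_2\|\y^{(p_2)}\|_2-(\y^{(p_2)})^T\y^{(i_2)})$ appearing in $\phi_{k_1}^{(r)}$. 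Two bookkeeping points would need tightening in a full write-up. First, your covariance-derivative identity must be stated level by level: what factorizes is the covariance increment attached to replica pairs whose shared hierarchical randomness runs through levels $k\geq k_1$; as written, its left-hand side is a single global object, and the claimed right-hand side with $\p_{k_1-1},\q_{k_1-1}$ would then only make sense for $k_1=1$. Second, $\phi_{02}^{(r)}$ does not come from the full diagonal $i_1=p_1$, $i_2=p_2$, $i_3=p_3$ as your text suggests, but from $i_1=p_1$ with $i_2\neq p_2$ (hence its $(s-1)$ weight and the $\gamma_{02}^{(r)}$ measure); the full diagonal gives $\phi_{01}^{(r)}$, and conflating the two would misplace the $(s-1)$ factor. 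Neither point is a conceptual gap --- both are instances of the ``routine-but-delicate'' telescoping you yourself flag --- and your closing observation that $\p_0=\q_0=1$ annihilates $\phi_{01}^{(r)}$ and $\phi_{02}^{(r)}$ is immediate.
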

\begin{proof}
Presented in \cite{Stojnicnflldp25}.
\end{proof}

We now consider a slightly different function $\psi_1$
\begin{eqnarray}\label{eq:saip1}
\psi_1(t) & = & -\frac{\mbox{sign}(s) s \beta^2}{2\sqrt{n}} \nonumber \\
& & \times \sum_{k=1}^{r+1}\Bigg(\Bigg. \p_{k-1}\q_{k-1}\mE_{G,{\mathcal U}_{r+1}} \langle\|\x^{(i_1)}\|_2\|\x^{(p_1)}\|_2\|\y^{(i_2)}\|_2\|\y^{(p_2)}\|_2\rangle_{\gamma_{k}^{(r)}}\nonumber \\
& & -\p_{k}\q_{k}\mE_{G,{\mathcal U}_{r+1}} \langle\|\x^{(i_1)}\|_2\|\x^{(p_1)}\|_2\|\y^{(i_2)}\|_2\|\y^{(p_2)}\|_2\rangle_{\gamma_{k+1}^{(r)}}\Bigg.\Bigg)
\m_{k} \omega(k_1;p)  \nonumber \\
& & +\psi(t).
 \end{eqnarray}
 For the simplicity of writing we assume constant magnitudes setup, i.e., we assume that within each of the sets ${\mathcal X}$, $\bar{{\mathcal X}}$, and ${\mathcal Y}$ the elements have equal magnitudes.

For $k_1>1$ we first find
\begin{eqnarray}\label{eq:saip2}
\frac{d\psi_1( t)}{d\p_{k_1}}
& = &
\frac{\mbox{sign}(s)s\beta^2}{2\sqrt{n}}\q_{k_1}\lp \m_{k_1}\omega(k_1;p) -\m_{k_1+1}\omega(k_1+1;p)   \rp
\nonumber \\
& & \times
\mE_{G,{\mathcal U}_{r+1}} \langle\|\x^{(i_1)}\|_2\|\x^{(p_1)}\|_2\|\y^{(i_2)}\|_2\|\y^{(p_2)}\|_2\rangle_{\gamma_{k_1+1}^{(r)}}
 +\frac{\mbox{sign}(s)s\beta}{2\sqrt{n}} \phi^{(k_1,\p)} \nonumber \\
& = &
\frac{\mbox{sign}(s)s\beta^2}{2\sqrt{n}} \Bigg(\Bigg. \q_{k_1}\lp \m_{k_1} -\m_{k_1+1}\rp p \mE_{G,{\mathcal U}_{r+1}} \langle\|\x^{(i_1)}\|_2\|\x^{(p_1)}\|_2\|\y^{(i_2)}\|_2\|\y^{(p_2)}\|_2\rangle_{\gamma_{k_1+1}^{(r)}}\nonumber \\
& &    -(1-t)\lp \m_{k_1}-\m_{k_1+1}\rp p \mE_{G,{\mathcal U}_{r+1}} \langle \|\x^{(i_1)}\|_2\|\x^{(p_1)}\|_2(\y^{(p_2)})^T\y^{(i_2)} \rangle_{\gamma_{k_1+1}^{(r)}} \nonumber \\
& &   - t\q_{k_1}
\lp \m_{k_1}-\m_{k_1+1}\rp  p \mE_{G,{\mathcal U}_{r+1}} \langle\|\x^{(i_1)}\|_2\|\x^{(p_1)}\|_2\|\y^{(i_2)}\|_2\|\y^{(p_2)}\|_2\rangle_{\gamma_{k_1+1}^{(r)}} \Bigg.\Bigg)\nonumber \\
& = &
(1-t)\lp \m_{k_1}-\m_{k_1+1}\rp p \frac{\mbox{sign}(s)s\beta^2}{2\sqrt{n}} \nonumber \\
& & \times \Bigg(\Bigg.   \mE_{G,{\mathcal U}_{r+1}} \langle\|\x^{(i_1)}\|_2\|\x^{(p_1)}\|_2 \lp \q_{k_1} \|\y^{(i_2)}\|_2\|\y^{(p_2)}\|_2 -(\y^{(p_2)})^T\y^{(i_2)}\rp\rangle_{\gamma_{k_1+1}^{(r)}} \Bigg.\Bigg),
 \end{eqnarray}
and then
\begin{eqnarray}\label{eq:saip3}
\frac{d\psi_1(t)}{d\q_{k_1}}
 & = &
\frac{\mbox{sign}(s)s\beta^2}{2\sqrt{n}}\p_{k_1}\lp \m_{k_1}\omega(k_1;p) -\m_{k_1+1}\omega(k_1+1;p)
\rp
\nonumber \\
& & \times
\mE_{G,{\mathcal U}_{r+1}} \langle\|\x^{(i_1)}\|_2\|\x^{(p_1)}\|_2\|\y^{(i_2)}\|_2\|\y^{(p_2)}\|_2\rangle_{\gamma_{k_1+1}^{(r)}}  +\frac{\mbox{sign}(s)s\beta^2}{2\sqrt{n}} \phi^{(k_1,\q)} \nonumber \\
& = &
\frac{\mbox{sign}(s)s\beta^2}{2\sqrt{n}} \Bigg(\Bigg. \p_{k_1}\lp \m_{k_1} -\m_{k_1+1}\rp p \mE_{G,{\mathcal U}_{r+1}} \langle\|\x^{(i_1)}\|_2\|\x^{(p_1)}\|_2\|\y^{(i_2)}\|_2\|\y^{(p_2)}\|_2\rangle_{\gamma_{k_1+1}^{(r)}}\nonumber \\
& &    -(1-t)\lp \m_{k_1}-\m_{k_1+1}\rp p \mE_{G,{\mathcal U}_{r+1}} \langle \|\y^{(i_2)}\|_2\|\y^{(p_2)}\|_2(\x^{(p_1)})^T\x^{(i_1)} \rangle_{\gamma_{k_1+1}^{(r)}} \nonumber \\
& &   - t\p_{k_1}
\lp \m_{k_1}-\m_{k_1+1}\rp  p \mE_{G,{\mathcal U}_{r+1}} \langle\|\x^{(i_1)}\|_2\|\x^{(p_1)}\|_2\|\y^{(i_2)}\|_2\|\y^{(p_2)}\|_2\rangle_{\gamma_{k_1+1}^{(r)}} \Bigg.\Bigg)\nonumber \\
& = &
(1-t)\lp \m_{k_1}-\m_{k_1+1}\rp p \frac{\mbox{sign}(s)s\beta^2}{2\sqrt{n}} \nonumber \\
& & \times \Bigg(\Bigg.   \mE_{G,{\mathcal U}_{r+1}} \langle\|\y^{(i_2)}\|_2\|\y^{(p_2)}\|_2
\lp \p_{k_1} \|\x^{(i_1)}\|_2\|\x^{(p_1)}\|_2 -(\x^{(p_1)})^T\x^{(i_1)}\rp\rangle_{\gamma_{k_1+1}^{(r)}} \Bigg.\Bigg).
 \end{eqnarray}
For $k_1=1$ we further have
\begin{eqnarray}\label{eq:saip2aa0}
\frac{d\psi_1( t)}{d\p_{1}}
& = &
\frac{\mbox{sign}(s)s\beta^2}{2\sqrt{n}}\q_{1}\lp \m_{1}\omega(1;p) -\m_{2}\omega(2;p)   \rp
\mE_{G,{\mathcal U}_{r+1}} \langle\|\x^{(i_1)}\|_2\|\x^{(p_1)}\|_2\|\y^{(i_2)}\|_2\|\y^{(p_2)}\|_2\rangle_{\gamma_{2}^{(r)}}
\nonumber \\
& &
 +\frac{\mbox{sign}(s)s\beta^2}{2\sqrt{n}} \phi^{(1,\p)} \nonumber \\
& = &
\frac{\mbox{sign}(s)s\beta^2}{2\sqrt{n}} \Bigg(\Bigg. \q_{1}\lp \m_{1} -\m_{2} p\rp  \mE_{G,{\mathcal U}_{r+1}} \langle\|\x^{(i_1)}\|_2\|\x^{(p_1)}\|_2\|\y^{(i_2)}\|_2\|\y^{(p_2)}\|_2\rangle_{\gamma_{2}^{(r)}}\nonumber \\
& &    -(1-t)\lp \m_{1}-\m_{2}\rp p \mE_{G,{\mathcal U}_{r+1}} \langle \|\x^{(i_1)}\|_2\|\x^{(p_1)}\|_2(\y^{(p_2)})^T\y^{(i_2)} \rangle_{\gamma_{21}^{(r)}} \nonumber \\
& &   - t\q_{1}
\lp \m_{1}-\m_{2}\rp  p \mE_{G,{\mathcal U}_{r+1}} \langle\|\x^{(i_1)}\|_2\|\x^{(p_1)}\|_2\|\y^{(i_2)}\|_2\|\y^{(p_2)}\|_2\rangle_{\gamma_{21}^{(r)}}
 \nonumber \\
& &    +(1-t) \m_{1} (p-1) \mE_{G,{\mathcal U}_{r+1}} \langle \|\x^{(i_1)}\|_2\|\x^{(p_1)}\|_2(\y^{(p_2)})^T\y^{(i_2)} \rangle_{\gamma_{22}^{(r)}} \nonumber \\
& &   + t\q_{1}
 \m_{1}  (p-1) \mE_{G,{\mathcal U}_{r+1}} \langle\|\x^{(i_1)}\|_2\|\x^{(p_1)}\|_2\|\y^{(i_2)}\|_2\|\y^{(p_2)}\|_2\rangle_{\gamma_{22}^{(r)}} \Bigg.\Bigg)\nonumber \\
& = &
\frac{\mbox{sign}(s)s\beta^2}{2\sqrt{n}} \Bigg(\Bigg. \q_{1}\lp \m_{1} -\m_{2} \rp p \mE_{G,{\mathcal U}_{r+1}} \langle\|\x^{(i_1)}\|_2\|\x^{(p_1)}\|_2\|\y^{(i_2)}\|_2\|\y^{(p_2)}\|_2\rangle_{\gamma_{2}^{(r)}}   \nonumber \\
& &    -(1-t)\lp \m_{1}-\m_{2}\rp p \mE_{G,{\mathcal U}_{r+1}} \langle \|\x^{(i_1)}\|_2\|\x^{(p_1)}\|_2(\y^{(p_2)})^T\y^{(i_2)} \rangle_{\gamma_{21}^{(r)}} \nonumber \\
& &   - t\q_{1}
\lp \m_{1}-\m_{2}\rp  p \mE_{G,{\mathcal U}_{r+1}} \langle\|\x^{(i_1)}\|_2\|\x^{(p_1)}\|_2\|\y^{(i_2)}\|_2\|\y^{(p_2)}\|_2\rangle_{\gamma_{21}^{(r)}}
 \nonumber \\
 & &
 - \q_{1} \m_{1} ( p -1 )  \mE_{G,{\mathcal U}_{r+1}} \langle\|\x^{(i_1)}\|_2\|\x^{(p_1)}\|_2\|\y^{(i_2)}\|_2\|\y^{(p_2)}\|_2\rangle_{\gamma_{2}^{(r)}} \nonumber \\
& &    +(1-t) \m_{1} (p-1) \mE_{G,{\mathcal U}_{r+1}} \langle \|\x^{(i_1)}\|_2\|\x^{(p_1)}\|_2(\y^{(p_2)})^T\y^{(i_2)} \rangle_{\gamma_{22}^{(r)}} \nonumber \\
& &   + t\q_{1}
 \m_{1}  (p-1) \mE_{G,{\mathcal U}_{r+1}} \langle\|\x^{(i_1)}\|_2\|\x^{(p_1)}\|_2\|\y^{(i_2)}\|_2\|\y^{(p_2)}\|_2\rangle_{\gamma_{22}^{(r)}} \Bigg.\Bigg)
 \nonumber \\
& = &
(1-t)\lp \m_{1}-\m_{2}\rp p \frac{\mbox{sign}(s)s\beta^2}{2\sqrt{n}} \nonumber \\
& & \times \Bigg(\Bigg.   \mE_{G,{\mathcal U}_{r+1}} \langle\|\x^{(i_1)}\|_2\|\x^{(p_1)}\|_2 \lp \q_{k_1} \|\y^{(i_2)}\|_2\|\y^{(p_2)}\|_2 -(\y^{(p_2)})^T\y^{(i_2)}\rp\rangle_{\gamma_{21}^{(r)}} \Bigg.\Bigg)
\nonumber \\
& - &
(1-t)  \m_{1}  (p-1) \frac{\mbox{sign}(s)s\beta^2}{2\sqrt{n}} \nonumber \\
& & \times \Bigg(\Bigg.   \mE_{G,{\mathcal U}_{r+1}} \langle\|\x^{(i_1)}\|_2\|\x^{(p_1)}\|_2 \lp \q_{k_1} \|\y^{(i_2)}\|_2\|\y^{(p_2)}\|_2 -(\y^{(p_2)})^T\y^{(i_2)}\rp\rangle_{\gamma_{22}^{(r)}} \Bigg.\Bigg),
 \end{eqnarray}
 where the last equality holds by noting that for constant magnitudes
 \begin{eqnarray}\label{eq:sqmag1}
 \mE_{G,{\mathcal U}_{r+1}} \langle\|\x^{(i_1)}\|_2\|\x^{(p_1)}\|_2\|\y^{(i_2)}\|_2\|\y^{(p_2)}\|_2\rangle_{\gamma_{2}^{(r)}}
& = & \mE_{G,{\mathcal U}_{r+1}} \langle\|\x^{(i_1)}\|_2\|\x^{(p_1)}\|_2\|\y^{(i_2)}\|_2\|\y^{(p_2)}\|_2\rangle_{\gamma_{21}^{(r)}}
 \nonumber \\
& = & \mE_{G,{\mathcal U}_{r+1}} \langle\|\x^{(i_1)}\|_2\|\x^{(p_1)}\|_2\|\y^{(i_2)}\|_2\|\y^{(p_2)}\|_2\rangle_{\gamma_{22}^{(r)}} .
 \end{eqnarray}
 Analogously to (\ref{eq:saip2aa0}) we also find
\begin{eqnarray}\label{eq:saip3aa0}
\frac{d\psi_1(t)}{d\q_{1}}
& = &
\frac{\mbox{sign}(s)s\beta^2}{2\sqrt{n}}\p_{1}\lp \m_{1}\omega(1;p) -\m_{2}\omega(2;p)  \rp\mE_{G,{\mathcal U}_{r+1}} \langle\|\x^{(i_1)}\|_2\|\x^{(p_1)}\|_2\|\y^{(i_2)}\|_2\|\y^{(p_2)}\|_2\rangle_{\gamma_{2}^{(r)}} \nonumber \\
& & +\frac{\mbox{sign}(s)s\beta^2}{2\sqrt{n}} \phi^{(1,\q)} \nonumber \\
& = &
\frac{\mbox{sign}(s)s\beta^2}{2\sqrt{n}} \Bigg(\Bigg. \p_{1}\lp \m_{1} -\m_{2} p\rp  \mE_{G,{\mathcal U}_{r+1}} \langle\|\x^{(i_1)}\|_2\|\x^{(p_1)}\|_2\|\y^{(i_2)}\|_2\|\y^{(p_2)}\|_2\rangle_{\gamma_{2}^{(r)}}\nonumber \\
& &    -(1-t)\lp \m_{1}-\m_{2}\rp p \mE_{G,{\mathcal U}_{r+1}} \langle \|\y^{(i_2)}\|_2\|\y^{(p_2)}\|_2(\x^{(p_1)})^T\x^{(i_1)} \rangle_{\gamma_{21}^{(r)}} \nonumber \\
& &   - t\p_{1}
\lp \m_{1}-\m_{2}\rp  p \mE_{G,{\mathcal U}_{r+1}} \langle\|\x^{(i_1)}\|_2\|\x^{(p_1)}\|_2\|\y^{(i_2)}\|_2\|\y^{(p_2)}\|_2\rangle_{\gamma_{21}^{(r)}} \nonumber \\
& &    +(1-t)  \m_{1}  (p-1) \mE_{G,{\mathcal U}_{r+1}} \langle \|\y^{(i_2)}\|_2\|\y^{(p_2)}\|_2(\x^{(p_1)})^T\x^{(i_1)} \rangle_{\gamma_{22}^{(1)}} \nonumber \\
& &   + t\p_{1}
 \m_{1} (p-1) \mE_{G,{\mathcal U}_{r+1}} \langle\|\x^{(i_1)}\|_2\|\x^{(p_1)}\|_2\|\y^{(i_2)}\|_2\|\y^{(p_2)}\|_2\rangle_{\gamma_{22}^{(r)}} \Bigg.\Bigg)\nonumber \\
& = &
\frac{\mbox{sign}(s)s\beta^2}{2\sqrt{n}} \Bigg(\Bigg. \p_{1}\lp \m_{1} -\m_{2} \rp p \mE_{G,{\mathcal U}_{r+1}} \langle\|\x^{(i_1)}\|_2\|\x^{(p_1)}\|_2\|\y^{(i_2)}\|_2\|\y^{(p_2)}\|_2\rangle_{\gamma_{2}^{(r)}}  \nonumber \\
& &    -(1-t)\lp \m_{1}-\m_{2}\rp p \mE_{G,{\mathcal U}_{r+1}} \langle \|\y^{(i_2)}\|_2\|\y^{(p_2)}\|_2(\x^{(p_1)})^T\x^{(i_1)} \rangle_{\gamma_{21}^{(r)}} \nonumber \\
& &   - t\p_{1}
\lp \m_{1}-\m_{2}\rp  p \mE_{G,{\mathcal U}_{r+1}} \langle\|\x^{(i_1)}\|_2\|\x^{(p_1)}\|_2\|\y^{(i_2)}\|_2\|\y^{(p_2)}\|_2\rangle_{\gamma_{21}^{(r)}} \nonumber \\
& &
- \p_{1}  \m_{1}( p -1)  \mE_{G,{\mathcal U}_{r+1}} \langle\|\x^{(i_1)}\|_2\|\x^{(p_1)}\|_2\|\y^{(i_2)}\|_2\|\y^{(p_2)}\|_2\rangle_{\gamma_{2}^{(r)}}
 \nonumber \\
& &    +(1-t)  \m_{1}  (p-1) \mE_{G,{\mathcal U}_{r+1}} \langle \|\y^{(i_2)}\|_2\|\y^{(p_2)}\|_2(\x^{(p_1)})^T\x^{(i_1)} \rangle_{\gamma_{22}^{(1)}} \nonumber \\
& &   + t\p_{1}
 \m_{1} (p-1) \mE_{G,{\mathcal U}_{r+1}} \langle\|\x^{(i_1)}\|_2\|\x^{(p_1)}\|_2\|\y^{(i_2)}\|_2\|\y^{(p_2)}\|_2\rangle_{\gamma_{22}^{(r)}} \Bigg.\Bigg)
 \nonumber \\
& = &
(1-t)\lp \m_{1}-\m_{2}\rp p \frac{\mbox{sign}(s)s\beta^2}{2\sqrt{n}} \nonumber \\
& & \times \Bigg(\Bigg.   \mE_{G,{\mathcal U}_{r+1}} \langle\|\y^{(i_2)}\|_2\|\y^{(p_2)}\|_2
\lp \p_{k_1} \|\x^{(i_1)}\|_2\|\x^{(p_1)}\|_2 -(\x^{(p_1)})^T\x^{(i_1)}\rp\rangle_{\gamma_{21}^{(r)}} \Bigg.\Bigg)
\nonumber \\
&  &
 - (1-t) \m_{1} ( p -1) \frac{\mbox{sign}(s)s\beta^2}{2\sqrt{n}} \nonumber \\
& & \times \Bigg(\Bigg.   \mE_{G,{\mathcal U}_{r+1}} \langle\|\y^{(i_2)}\|_2\|\y^{(p_2)}\|_2
\lp \p_{k_1} \|\x^{(i_1)}\|_2\|\x^{(p_1)}\|_2 -(\x^{(p_1)})^T\x^{(i_1)}\rp\rangle_{\gamma_{22}^{(r)}} \Bigg.\Bigg),
 \end{eqnarray}
where, as above,  the last equality holds based on (\ref{eq:sqmag1}). The above holds for any $t$. Below our focus is on  $t$ dependent vectors $\p(t)$, $\q(t)$, and $\m(t)$  and along those lines we  assume that the following system of equations is satisfied
\begin{eqnarray}\label{eq:saip4}
\frac{d\psi_1(\p,\q,\m,t)}{d\p_{1}}
& = &
(1-t)\lp \m_{1}-\m_{2}  \rp p\frac{\mbox{sign}(s)s\beta^2}{2\sqrt{n}} \nonumber \\
& & \times \Bigg(\Bigg.   \mE_{G,{\mathcal U}_{r+1}} \langle\|\x^{(i_1)}\|_2\|\x^{(p_1)}\|_2 \lp \q_{1} \|\y^{(i_2)}\|_2\|\y^{(p_2)}\|_2 -(\y^{(p_2)})^T\y^{(i_2)}\rp\rangle_{\gamma_{21}^{(r)}} \Bigg.\Bigg)\nonumber \\
&  &
- (1-t)  \m_{1} (p-1) \frac{\mbox{sign}(s)s\beta^2}{2\sqrt{n}} \nonumber \\
& & \times \Bigg(\Bigg.   \mE_{G,{\mathcal U}_{r+1}} \langle\|\x^{(i_1)}\|_2\|\x^{(p_1)}\|_2 \lp \q_{1} \|\y^{(i_2)}\|_2\|\y^{(p_2)}\|_2 -(\y^{(p_2)})^T\y^{(i_2)}\rp\rangle_{\gamma_{22}^{(r)}} \Bigg.\Bigg)\nonumber \\
& = & 0,  \nonumber \\
\frac{d\psi_1(\p,\q,\m,t)}{d\p_{k_1}}
& = &
(1-t)\lp \m_{k_1}-\m_{k_1+1}\rp p \frac{\mbox{sign}(s)s\beta^2}{2\sqrt{n}} \nonumber \\
& & \times \Bigg(\Bigg.   \mE_{G,{\mathcal U}_{r+1}} \langle\|\x^{(i_1)}\|_2\|\x^{(p_1)}\|_2 \lp \q_{k_1} \|\y^{(i_2)}\|_2\|\y^{(p_2)}\|_2 -(\y^{(p_2)})^T\y^{(i_2)}\rp\rangle_{\gamma_{k_1+1}^{(r)}} \Bigg.\Bigg)\nonumber \\
& = & 0, k_1>1,  \nonumber \\
\frac{d\psi_1(\p,\q,\m,t)}{d\q_{1}}
& = &
(1-t)\lp \m_{1}-\m_{2}\rp p \frac{\mbox{sign}(s)s\beta^2}{2\sqrt{n}} \nonumber \\
& & \times \Bigg(\Bigg.   \mE_{G,{\mathcal U}_{r+1}} \langle\|\y^{(i_2)}\|_2\|\y^{(p_2)}\|_2
\lp \p_{1} \|\x^{(i_1)}\|_2\|\x^{(p_1)}\|_2 -(\x^{(p_1)})^T\x^{(i_1)}\rp\rangle_{\gamma_{21}^{(r)}} \Bigg.\Bigg) \nonumber \\
&  &
 - (1-t) \m_{1} ( p-1) \frac{\mbox{sign}(s)s\beta^2}{2\sqrt{n}} \nonumber \\
& & \times \Bigg(\Bigg.   \mE_{G,{\mathcal U}_{r+1}} \langle\|\y^{(i_2)}\|_2\|\y^{(p_2)}\|_2
\lp \p_{1} \|\x^{(i_1)}\|_2\|\x^{(p_1)}\|_2 -(\x^{(p_1)})^T\x^{(i_1)}\rp\rangle_{\gamma_{22}^{(r)}} \Bigg.\Bigg) \nonumber \\
 & = & 0, \nonumber \\
\frac{d\psi_1(\p,\q,\m,t)}{d\q_{k_1}}
& = &
(1-t)\lp \m_{k_1}-\m_{k_1+1}\rp p \frac{\mbox{sign}(s)s\beta^2}{2\sqrt{n}} \nonumber \\
& & \times \Bigg(\Bigg.   \mE_{G,{\mathcal U}_{r+1}} \langle\|\y^{(i_2)}\|_2\|\y^{(p_2)}\|_2
\lp \p_{k_1} \|\x^{(i_1)}\|_2\|\x^{(p_1)}\|_2 -(\x^{(p_1)})^T\x^{(i_1)}\rp\rangle_{\gamma_{k_1+1}^{(r)}} \Bigg.\Bigg) \nonumber \\
 & = & 0, k_1>1, \nonumber \\
\frac{d\psi_1(\p,\q,\m,t)}{d\m_{k_1}}
 & = & 0,
  \end{eqnarray}
where  $\p_0(t)=\q_0(t)=\m_0(t)=1$ and  $\p$, $\q$, and $\m$ are added as arguments of $\psi_1(\cdot)$ due to their dependence on $t$. We can then establish the following theorem.

\begin{theorem}
\label{thm:thm5}
Assume the setup of Theorem \ref{thm:thm3} with elements of sets ${\mathcal X}$, $\bar{{\mathcal X}}$, and ${\mathcal Y}$ having equal magnitudes.
Consider the following  stationirized  fully lifted large deviation random duality theory frame (\textbf{\emph{complete sfl LD RDT frame}}). Assume that there exist $\bar{\p}(t)$, $\bar{\q}(t)$, and $\bar{\m}(t)$ that satisfy (\ref{eq:saip4}) and that $\bar{\p}(t)$ and $\bar{\q}(t)$ are not only the  overlaps $(\x^{(p_1)})^T\x^{(i_1)}$ and $(\y^{(p_2)})^T\y^{(i_2)}$ (scaled) expected values (in the sense of (\ref{eq:saip4})) but also their concentrating points (or such that $\phi_{k_1+1}^{(r)}=0$). For $\bar{\p}_0(t)=\bar{\q}_0(t)=1$,
$\bar{\p}_{r+1}(t)=\bar{\q}_{r+1}(t)=\bar{\m}_{r+1}(t)=0$, and $\m_1(t)\rightarrow\m_0(t)=1$ one also has
$\frac{d\psi_1(\bar{\p}(t),\bar{\q}(t),\bar{\m}(t),t)}{dt}   =   0$ and
\begin{eqnarray}\label{eq:thm5eq1}
\lim_{n\rightarrow\infty}\psi_1(\bar{\p}(t),\bar{\q}(t),\bar{\m}(t),t)
 =
\lim_{n\rightarrow\infty}\psi_1(\bar{\p}(0),\bar{\q}(0),\bar{\m}(0),,0)
= \lim_{n\rightarrow\infty} \psi_1(\bar{\p}(1),\bar{\q}(1),\bar{\m}(1),1),
\end{eqnarray}
with $\psi_1(\cdot)$ and $\psi(\cdot)$  as in (\ref{eq:saip1}) and  (\ref{eq:thm3eq1}), respectively.
\end{theorem}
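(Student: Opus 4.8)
The plan is to differentiate $\psi_1$ along the stationary trajectory $(\bar{\p}(t),\bar{\q}(t),\bar{\m}(t))$ and show the total derivative vanishes, so that the $t$-independence claimed in (\ref{eq:thm5eq1}) follows by integration over $[0,1]$. By the chain rule,
\begin{equation}
\frac{d\psi_1(\bar{\p}(t),\bar{\q}(t),\bar{\m}(t),t)}{dt}
=
\frac{\partial\psi_1}{\partial t}
+\sum_{k_1=1}^{r}\lp\frac{\partial\psi_1}{\partial\p_{k_1}}\dot{\bar{\p}}_{k_1}
+\frac{\partial\psi_1}{\partial\q_{k_1}}\dot{\bar{\q}}_{k_1}
+\frac{\partial\psi_1}{\partial\m_{k_1}}\dot{\bar{\m}}_{k_1}\rp,
\end{equation}
the boundary components $\bar{\p}_0=\bar{\q}_0=\bar{\m}_0=1$ and $\bar{\p}_{r+1}=\bar{\q}_{r+1}=\bar{\m}_{r+1}=0$ being held fixed. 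By the defining property of the complete sfl LD RDT frame, the trajectory satisfies the stationarity system (\ref{eq:saip4}), so every parameter partial $\partial\psi_1/\partial\p_{k_1}$, $\partial\psi_1/\partial\q_{k_1}$, $\partial\psi_1/\partial\m_{k_1}$ vanishes along the path. Hence the total derivative collapses to the explicit partial $\partial\psi_1/\partial t$, and the problem reduces to showing this explicit partial is zero.

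Next I would isolate the explicit $t$-dependence of $\psi_1$. Inspecting (\ref{eq:saip1}), $\psi_1$ is $\psi(t)$ plus the correction sum weighted by $\m_k\omega(k;p)$. In the equal-magnitude setup the factor $\|\x^{(i_1)}\|_2\|\x^{(p_1)}\|_2\|\y^{(i_2)}\|_2\|\y^{(p_2)}\|_2$ occurring in each correction bracket is a fixed constant, and since each $\gamma_{k}^{(r)}$ from (\ref{eq:thm3eq3}) is a normalized probability measure (the operators $\Phi_{{\mathcal U}_k}$ and $\Phi_{{\mathcal U}_1}^{(i_3)}$ fix the constant $1$, while $\gamma_{00}$ and $\gamma_0$ sum to $1$), the average $\mE_{G,{\mathcal U}_{r+1}}\langle\|\x^{(i_1)}\|_2\|\x^{(p_1)}\|_2\|\y^{(i_2)}\|_2\|\y^{(p_2)}\|_2\rangle_{\gamma_{k}^{(r)}}$ equals that same constant for every $t$. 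Thus the correction sum depends on $t$ only through $\p(t),\q(t),\m(t)$ and carries no explicit $t$-dependence, giving $\partial\psi_1/\partial t=\partial\psi/\partial t$. Invoking Theorem \ref{thm:thm4} with $\bar{\p}_0=\bar{\q}_0=1$ (which annihilates $\phi_{01}^{(r)}$ and $\phi_{02}^{(r)}$ through their $(1-\p_0)$, $(1-\q_0)$ prefactors) yields the reduced form (\ref{eq:rthlev2genanal43}),
\begin{equation}
\frac{\partial\psi_1}{\partial t}
=\frac{\mbox{sign}(s)\beta^2}{2\sqrt{n}}\lp\sum_{k_1=1}^{r+1}\phi_{k_1}^{(r)}+\phi_{22}^{(r)}\rp .
\end{equation}

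It then remains to kill the right-hand side. For $k_1\geq 2$ the content of $\phi_{k_1}^{(r)}$ in (\ref{eq:thm3eq5}) is the $\gamma_{k_1}^{(r)}$-average of the product $(\p_{k_1-1}\|\x^{(i_1)}\|_2\|\x^{(p_1)}\|_2-(\x^{(p_1)})^T\x^{(i_1)})(\q_{k_1-1}\|\y^{(i_2)}\|_2\|\y^{(p_2)}\|_2-(\y^{(p_2)})^T\y^{(i_2)})$, and $\phi_{22}^{(r)}$ has the analogous form with $\gamma_{22}^{(r)}$ and level-$1$ scalings; the hypothesis that $\bar{\p}$, $\bar{\q}$ are genuine concentration points of the overlaps forces each deviation factor to vanish, so $\phi_{k_1}^{(r)}=0$ for $k_1\geq 2$ and $\phi_{22}^{(r)}=0$. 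The single term beyond the reach of concentration is $\phi_{1}^{(r)}$: with $\bar{\p}_0=\bar{\q}_0=1$ the level-$0$ deviations $\p_0\|\x^{(i_1)}\|_2\|\x^{(p_1)}\|_2-(\x^{(p_1)})^T\x^{(i_1)}$ and $\q_0\|\y^{(i_2)}\|_2\|\y^{(p_2)}\|_2-(\y^{(p_2)})^T\y^{(i_2)}$ need not concentrate, but $\phi_{1}^{(r)}$ carries the prefactor $-s(\m_0-\m_1)\omega(1;p)=-s(1-\m_1)$, which is driven to $0$ precisely by the prescribed limit $\m_1(t)\to\m_0(t)=1$. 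Every summand of $\partial\psi_1/\partial t$ is therefore annihilated, so $\frac{d\psi_1}{dt}=0$, and integrating over $t\in[0,1]$ gives $\lim_{n\to\infty}\psi_1$ constant in $t$, i.e.\ (\ref{eq:thm5eq1}).

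The main obstacle is this last step, specifically the passage to $n\to\infty$: the assumption that $\bar{\p}$, $\bar{\q}$ are not merely the reweighted expected overlaps guaranteed by (\ref{eq:saip4}) but actual concentration points is an asymptotic statement, so the vanishing of $\phi_{k_1}^{(r)}$ and $\phi_{22}^{(r)}$, and hence the constancy of $\psi_1$, hold only in the thermodynamic limit. Making this rigorous requires controlling the fluctuations of the overlaps under the $\gamma^{(r)}$ measures uniformly in $t$ so that $\int_0^1(d\psi_1/dt)\,dt\to 0$, together with checking that the $\m_1\to 1$ limit commutes with the expectations $\mE_{G,{\mathcal U}_{r+1}}$. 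The remaining bookkeeping — the normalization of the $\gamma^{(r)}$ measures and the cancellations already displayed in (\ref{eq:saip2})--(\ref{eq:saip3aa0}) — is routine given the first-level and $r$-th-level identities established earlier.
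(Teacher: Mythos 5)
Your proposal is correct and follows essentially the same route as the paper's own proof: the chain rule plus the stationarity system (\ref{eq:saip4}) collapses the total derivative to the explicit partial, the equal-magnitude assumption makes the correction sum in (\ref{eq:saip1}) free of explicit $t$-dependence so that $\partial\psi_1/\partial t=\partial\psi/\partial t$, and Theorem \ref{thm:thm4} with $\p_0=\q_0=1$ together with the concentration hypothesis and $\m_1\rightarrow\m_0=1$ annihilates all the $\phi$ terms. The only (inessential) nuance is that the paper kills $\phi_{2}^{(r)}+\phi_{22}^{(r)}$ jointly, with concentration for $k_1=1$ understood under the quasi-measure $\frac{(\m_1-\m_2)p}{(\m_1-\m_2)p+\m_1(1-p)}\gamma_{21}^{(r)}+\frac{\m_1(1-p)}{(\m_1-\m_2)p+\m_1(1-p)}\gamma_{22}^{(r)}$ exactly matching the $k_1=1$ equations of (\ref{eq:saip4}), whereas you assume concentration under $\gamma_{21}^{(r)}$ and $\gamma_{22}^{(r)}$ separately, a marginally stronger reading of the same hypothesis.
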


\begin{proof}
By the choice of $\m_0$ and $\m_1$ one has  $\phi_{1}^{(r)}\rightarrow 0$. Comparing the forms in (\ref{eq:saip4}) and (\ref{eq:thm3eq5}) one then recognizes $\phi_{k_1+1}^{(r)}\rightarrow 0$ for $k_1>1$
and $\phi_{2}^{(r)} + \phi_{22}^{(r)}\rightarrow 0$  (for $k_1=1$ one also keeps in mind that expectations and concentrations are taken in the sense of (quasi) measure $\frac{(\m_1-\m_2)p}{(\m_1-\m_2)p+\m_1(1-p)} \gamma_{21}^{(r)} + \frac{\m_1(1-p)}{(\m_1-\m_2)p+\m_1(1-p)}  \gamma_{22}^{(r)} $; of additional practical interest might be to note the rotation invariance role regarding $\gamma_{21}^{(r)} = \gamma_{22}^{(r)}$ and along the same lines the suitability of choice  $f_{\bar{\x}} (\x)\sim \bar{\x}^T\x $). Based on (\ref{eq:rthlev2genanal43}) and analogously to
\cite{Stojnicsflgscompyx23}'s (101)
\begin{eqnarray}\label{eq:saip7}
\frac{d\psi_1(\bar{\p}(t),\bar{\q}(t),\bar{\m}(t),t)}{dt}
& = &
\frac{\partial\psi_1(\bar{\p}(t),\bar{\q}(t),\bar{\m}(t),t)}{\partial t}\frac{dt}{dt}
+\frac{\partial \psi_1(\bar{\p}(t),\bar{\q}(t),\bar{\m}(t),t)}{\partial \bar{\p}(t)}\frac{d\bar{\p}(t)}{dt} \nonumber \\
& &
+\frac{\partial \psi_1(\bar{\p}(t),\bar{\q}(t),\bar{\m}(t),t)}{\partial \bar{\q}(t)}\frac{d\bar{\q}(t)}{dt}
+\frac{\partial \psi_1(\bar{\p}(t),\bar{\q}(t),\bar{\m}(t),t)}{\partial \bar{\m}(t)}\frac{d\bar{\m}(t)}{dt} \nonumber \\
& = &
\frac{\partial\psi_1(\bar{\p}(t),\bar{\q}(t),\bar{\m}(t),t)}{\partial t}
  =
\frac{\partial\psi(\bar{\p}(t),\bar{\q}(t),\bar{\m}(t),t)}{\partial t}
  =        \frac{\mbox{sign}(s)\beta^2}{2\sqrt{n}} \sum_{k_1=1}^{r+1} \phi_{k_1}^{(r)}\rightarrow 0, \nonumber \\
 \end{eqnarray}
where the second equality follows by having $\bar{\p}(t)$, $\bar{\q}(t)$, and $\bar{\m}(t)$ being the stationary points of $\psi_1(\cdot)$.
\end{proof}

All the parts of the discussion from \cite{Stojnicsflgscompyx23} regarding the properties and practical relevance of the above theorem remain in place here as well. The only difference is that now the range of applicability is much wider and precisely along the lines of what is discussed in \cite{Stojnicnflldp25}. We skip repeating all of these and instead state the following two corollaries as practically the most useful.

\begin{corollary}
\label{thm:thm6}
Assume the setup of Theorem \ref{thm:thm5} with  sets ${\mathcal X}$ and ${\mathcal Y}$ being such that $\|\x\|_2=\|\y\|_2=1$. Then
 \begin{eqnarray}\label{eq:thm6eq0}
\lim_{n\rightarrow\infty}\psi_1(\bar{\p}(t),\bar{\q}(t),\bar{\m}(t),t)
 =
\lim_{n\rightarrow\infty}\psi_1(\bar{\p}(0),\bar{\q}(0),\bar{\m}(0),0)  =  \lim_{n\rightarrow\infty} \psi_1(\bar{\p}(1),\bar{\q}(1),\bar{\m}(1),1),
\end{eqnarray}
 and
\begin{eqnarray}\label{eq:thm6eq1}
\psi_1(\bar{\p}(0),\bar{\q}(0),\bar{\m}(0),0) & = & -\frac{\mbox{sign}(s) s \beta^2}{2\sqrt{n}} \sum_{k=1}^{r+1}\Bigg(\Bigg. \bar{\p}_{k-1}(0)\bar{\q}_{k-1}(0)  -\bar{\p}_{k}(0)\bar{\q}_{k}(0)   \Bigg.\Bigg)
\bar{\m}_k(0) \omega(k;p) \nonumber \\
& &  +\psi(\bar{\p}(0),\bar{\q}(0),\bar{\m}(0),0) \nonumber \\
\psi_1(\bar{\p}(1),\bar{\q}(1),\bar{\m}(1),1) & = & -\frac{\mbox{sign}(s) s \beta}{2\sqrt{n}} \sum_{k=1}^{r+1}\Bigg(\Bigg. \bar{\p}_{k-1}(1)\bar{\q}_{k-1}(1)  -\bar{\p}_{k}(1)\bar{\q}_{k}(1)   \Bigg.\Bigg)
\bar{\m}_k(1)  \omega(k;p) \nonumber \\
& &  +\psi(\bar{\p}(1),\bar{\q}(1),\bar{\m}(1),1).
 \end{eqnarray}
 Moreover, let
 \begin{equation}\label{eq:thm6eq2}
\psi_S(\p,\q,\m,t)  =  \mE_{G,{\mathcal U}_{r+1}} \frac{1}{p|s|\sqrt{n}\m_r} \log
\lp \mE_{{\mathcal U}_{r}} \lp \dots \lp \mE_{{\mathcal U}_2}\lp\lp
\lp \sum_{i_3=1}^{l} \mE_{{\mathcal U}_1}  Z_{i_3,S}^{\m_1}\rp^p \rp^{\frac{\m_2}{\m_1}}\rp\rp^{\frac{\m_3}{\m_2}} \dots \rp^{\frac{\m_{r}}{\m_{r-1}}}\rp,
\end{equation}
where,  analogously to (\ref{eq:thm3eq1}) and (\ref{eq:thm3eq2}),
\begin{equation}\label{eq:thm6eq3}
Z_{i_3,S}  \triangleq  \sum_{i_1=1}^{l}\lp\sum_{i_2=1}^{l}e^{\beta D_{0,S}^{(i_1,i_2,i_3)}} \rp^{s}, \nonumber \\
\end{equation}
with
\begin{eqnarray}\label{eq:thm6eq4}
 D_{0,S}^{(i_1,i_2,i_3)}
 & \triangleq &
 \sqrt{t}(\y^{(i_2)})^T
 G\x^{(i_1)}+\sqrt{1-t}\|\x^{(i_2)}\|_2 (\y^{(i_2)})^T\lp\sum_{k=1}^{r+1}b_k\u^{(2,k)}\rp  \nonumber \\
& &
 +\sqrt{1-t}\|\y^{(i_2)}\|_2\lp\sum_{k=1}^{r+1}c_k\h^{(k)}\rp^T\x^{(i)}
 + f_{\bar{\x}^{(i_3)} } (\x^{(i_1)}).
 \end{eqnarray}
Then
\begin{eqnarray}\label{eq:thm6eq5}
\lim_{n\rightarrow\infty} \psi_S(\bar{\p}(1),\bar{\q}(1),\bar{\m}(1),1) & = &
 -\lim_{n\rightarrow\infty}\frac{\mbox{sign}(s) s \beta^2}{2\sqrt{n}} \sum_{k=1}^{r+1}\Bigg(\Bigg. \bar{\p}_{k-1}(0)\bar{\q}_{k-1}(0)  -\bar{\p}_{k}(0)\bar{\q}_{k}(0)   \Bigg.\Bigg)
\bar{\m}_k(t) \omega(k;p)
 \nonumber \\
& &  +\lim_{n\rightarrow\infty} \psi_S(\bar{\p}(0),\bar{\q}(0),\bar{\m}(0),0). \nonumber \\
 \end{eqnarray}
\end{corollary}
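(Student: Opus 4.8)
The plan is to reduce everything to Theorem~\ref{thm:thm5} together with two explicit identities: the evaluation of $\psi_1$ at the two path endpoints, and the exact relation between $\psi$ and the stationarized $\psi_S$. The chain of limits in (\ref{eq:thm6eq0}) is nothing but (\ref{eq:thm5eq1}) specialized to the unit-norm case, which is a particular instance of the equal-magnitude hypothesis of Theorem~\ref{thm:thm5}, so it requires no further work. To obtain (\ref{eq:thm6eq1}) I would return to the definition (\ref{eq:saip1}) of $\psi_1$ and observe that when $\|\x\|_2=\|\y\|_2=1$ every bracketed average $\mE_{G,{\mathcal U}_{r+1}}\langle\|\x^{(i_1)}\|_2\|\x^{(p_1)}\|_2\|\y^{(i_2)}\|_2\|\y^{(p_2)}\|_2\rangle_{\gamma_k^{(r)}}$ collapses to $1$, since each $\gamma$-measure in (\ref{eq:thm3eq3}) is a (quasi-)probability measure of total mass one. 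The correction part of $\psi_1$ thereby loses all its $\gamma$-dependence and reduces to the deterministic $-\frac{\mbox{sign}(s)s\beta^2}{2\sqrt{n}}\sum_{k=1}^{r+1}(\p_{k-1}\q_{k-1}-\p_k\q_k)\m_k\omega(k;p)$, which is exactly (\ref{eq:thm6eq1}) once the path values are inserted.

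The core of the argument is the comparison of $\psi$ and $\psi_S$ at the endpoints. At $t=0$ the factor $\sqrt{t}$ kills both the $G$-bilinear term and the scalar terms $\|\x^{(i_1)}\|_2\|\y^{(i_2)}\|_2\sum_k a_ku^{(4,k)}$ in $D_0$, so $D_0=D_{0,S}$ identically and hence $\psi(\cdot,0)=\psi_S(\cdot,0)$. At $t=1$ the only difference between $D_0$ and $D_{0,S}$ is the term $\|\x^{(i_1)}\|_2\|\y^{(i_2)}\|_2\sum_{k=1}^{r+1}a_ku^{(4,k)}$, which under the unit-norm assumption becomes the single, index-independent Gaussian shift $g\triangleq\sum_{k=1}^{r+1}a_ku^{(4,k)}$. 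Consequently each summand factorizes as $Z_{i_3}=e^{s\beta g}\tilde Z_{i_3}$, where $\tilde Z_{i_3}$ is the $\psi_S$-summand, free of all $u^{(4,k)}$. I would then push this common factor through the nested structure (\ref{eq:thm3eq1}) one level at a time, integrating out $u^{(4,1)}$ inside $\mE_{{\mathcal U}_1}Z_{i_3}^{\m_1}$, then $u^{(4,k)}$ inside $\mE_{{\mathcal U}_k}$ for $k=2,\dots,r$, and finally $u^{(4,r+1)}$ under the outer $\mE_{{\mathcal U}_{r+1}}$.

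The hard part will be the careful bookkeeping of these Gaussian moment-generating factors. Because $\tilde Z_{i_3}$ is independent of every $u^{(4,k)}$, each integration $\mE_{u^{(4,k)}}e^{\lambda_k u^{(4,k)}}=e^{\lambda_k^2/2}$ factors cleanly and leaves a deterministic exponent that is subsequently rescaled by the higher-level exponents $\m_{k+1}/\m_k,\dots,\m_r/\m_{r-1}$. Tracking the coefficient $\lambda_k$ that multiplies $u^{(4,k)}$ right before its integration, one finds $\lambda_1=s\beta\m_1a_1$ (integrated inside $Z^{\m_1}$, i.e. before the outer power $p$ is applied) whereas $\lambda_k=ps\beta\m_ka_k$ for $k\ge 2$ (integrated after the power $p$ is already present); carrying the accumulated exponents up to level $r$ and dividing by the overall normalization $p|s|\sqrt{n}\m_r$ yields precisely $\frac{\mbox{sign}(s)s\beta^2}{2\sqrt{n}}\m_ka_k^2\omega(k;p)$ for each $k$, the $\omega(k;p)$ dichotomy being exactly the cancellation (for $k=1$) versus survival (for $k\ge 2$) of the factor $p$. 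The $u^{(4,r+1)}$ contribution is only a mean-zero linear term under the outer expectation and vanishes, consistently with $\m_{r+1}=0$. Since $a_k^2=\p_{k-1}\q_{k-1}-\p_k\q_k$, this establishes
\[
\psi(\cdot,1)=\psi_S(\cdot,1)+\frac{\mbox{sign}(s)s\beta^2}{2\sqrt{n}}\sum_{k=1}^{r+1}(\p_{k-1}\q_{k-1}-\p_k\q_k)\m_k\omega(k;p).
\]

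Finally I would assemble the pieces. Inserting the last identity into (\ref{eq:thm6eq1}) at $t=1$ shows that the explicit correction carried by $\psi_1$ exactly cancels the one relating $\psi$ to $\psi_S$, so that $\psi_1(\bar{\p}(1),\bar{\q}(1),\bar{\m}(1),1)=\psi_S(\bar{\p}(1),\bar{\q}(1),\bar{\m}(1),1)$; at $t=0$ the coincidence $\psi(\cdot,0)=\psi_S(\cdot,0)$ together with (\ref{eq:thm6eq1}) gives $\psi_1(\bar{\p}(0),\bar{\q}(0),\bar{\m}(0),0)=-\frac{\mbox{sign}(s)s\beta^2}{2\sqrt{n}}\sum_{k=1}^{r+1}(\bar{\p}_{k-1}(0)\bar{\q}_{k-1}(0)-\bar{\p}_k(0)\bar{\q}_k(0))\bar{\m}_k(0)\omega(k;p)+\psi_S(\bar{\p}(0),\bar{\q}(0),\bar{\m}(0),0)$. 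Equating the two $n\to\infty$ limits through (\ref{eq:thm6eq0}) then produces (\ref{eq:thm6eq5}), the relevant weight there being $\bar{\m}_k(0)$. The only genuine obstacle is the level-by-level factor tracking of the previous paragraph; the rest is substitution.
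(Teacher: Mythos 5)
Your proposal is correct and follows essentially the same route as the paper's proof: evaluate $\psi_1$ at the endpoints using that the $\gamma$-averages collapse to one under unit norms, note $\psi(\cdot,0)=\psi_S(\cdot,0)$, show $\psi_S(\cdot,1)=\psi_1(\cdot,1)$ by integrating out the $u^{(4,k)}$'s, and close with the limit equality from Theorem \ref{thm:thm5}. The only difference is one of exposition: your level-by-level MGF bookkeeping (with $\lambda_1=s\beta\m_1a_1$ versus $\lambda_k=ps\beta\m_ka_k$ producing the $\omega(k;p)$ dichotomy) spells out what the paper compresses into the single assertion that ``integrating out'' the scalar Gaussians yields (\ref{eq:thm6eq6}).
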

\begin{proof}
One first notes that integrating out $u^{(2,k)}$ in $\psi(\cdot)$ gives
\begin{eqnarray}\label{eq:thm6eq6}
 \psi_S(\bar{\p}(1),\bar{\q}(1),\bar{\m}(1),1) & = & -\frac{\mbox{sign}(s) s \beta^2}{2\sqrt{n}} \sum_{k=1}^{r+1}\Bigg(\Bigg. \bar{\p}_{k-1}(1)\bar{\q}_{k-1}(1)  -\bar{\p}_{k}(1)\bar{\q}_{k}(1)   \Bigg.\Bigg)
\bar{\m}_k(1) \omega(k;p)
\nonumber \\
& &  +\psi(\bar{\p}(1),\bar{\q}(1),\bar{\m}(1),1). \nonumber \\
& = &  \psi_1(\bar{\p}(1),\bar{\q}(1),\bar{\m}(1),1).
 \end{eqnarray}
Moreover,
 \begin{eqnarray}\label{eq:thm6eq7}
  \psi_1(\bar{\p}(0),\bar{\q}(0),\bar{\m}(0),0) & = & -\frac{\mbox{sign}(s) s \beta^2}{2\sqrt{n}} \sum_{k=1}^{r+1}\Bigg(\Bigg. \bar{\p}_{k-1}(0)\bar{\q}_{k-1}(0)  -\bar{\p}_{k}(0)\bar{\q}_{k}(0)  \Bigg.\Bigg)
\bar{\m}_k(0)  \omega(k;p) \nonumber \\
& &  + \psi(\bar{\p}(0),\bar{\q}(0),\bar{\m}(0),0) \nonumber \\
 & = &  -\frac{\mbox{sign}(s) s \beta^2}{2\sqrt{n}} \sum_{k=1}^{r+1}\Bigg(\Bigg. \bar{\p}_{k-1}(0)\bar{\q}_{k-1}(0)  -\bar{\p}_{k}(0)\bar{\q}_{k}(0)   \Bigg.\Bigg)
\bar{\m}_k(0)   \omega(k;p)  \nonumber \\
& &  +\psi_S(\bar{\p}(0),\bar{\q}(0),\bar{\m}(0),0). \nonumber \\
 \end{eqnarray}
Keeping in mind that   $\lim_{n\rightarrow\infty}\psi_1(\bar{\p}(0),\bar{\q}(0),\bar{\m}(0),0)
=\lim_{n\rightarrow\infty}\psi_1(\bar{\p}(1),\bar{\q}(1),\bar{\m}(1),1)$, is implied by (\ref{eq:thm5eq1}) and (\ref{eq:thm6eq0}), one then has
\begin{eqnarray}\label{eq:thm6eq8}
\lim_{n\rightarrow\infty} \psi_S(\bar{\p}(1),\bar{\q}(1),\bar{\m}(1),1) & = & -\lim_{n\rightarrow\infty}\frac{\mbox{sign}(s) s \beta^2}{2\sqrt{n}} \sum_{k=1}^{r+1}\Bigg(\Bigg. \bar{\p}_{k-1}(0)\bar{\q}_{k-1}(0)  -\bar{\p}_{k}(0)\bar{\q}_{k}(0)   \Bigg.\Bigg)
\bar{\m}_k(0)  \omega(k;p) \nonumber \\
& &   +\lim_{n\rightarrow\infty}\psi_S(\bar{\p}(0),\bar{\q}(0),\bar{\m}(0),0), \nonumber \\
 \end{eqnarray}
which matches (\ref{eq:thm6eq5}).
\end{proof}

The following corollary is then directly implied by Theorem \ref{thm:thm5} and Corollary \ref{thm:thm6}.
\begin{corollary}
\label{thm:thm7}
Consider the following \textbf{\emph{modulo-$\m$ sfl LD RDT frame}}. Assume that there is an $\m$ such that $\bar{\p}(t)$ and $\bar{\q}(t)$ that fit the first four sets of equations in (\ref{eq:saip4})  are not only the overlaps $(\x^{(p_1)})^T\x^{(i_1)}$ and $(\y^{(p_2)})^T\y^{(i_2)}$
scaled expected values (in the sense of (\ref{eq:saip4})) but also their concentrating points (or such that $\phi_{k_1+1}^{(r)}=0$).  One then has
$\frac{d\psi_1(\bar{\p}(t),\bar{\q}(t),\m,t)}{dt}   =   0$,
 \begin{eqnarray}\label{eq:thm7eq0}
\lim_{n\rightarrow\infty}\psi_1(\bar{\p}(t),\bar{\q}(t),\bar{\m}(t),t)
 =
\lim_{n\rightarrow\infty}\psi_1(\bar{\p}(0),\bar{\q}(0),\bar{\m}(0),0) =  \lim_{n\rightarrow\infty} \psi_1(\bar{\p}(1),\bar{\q}(1),\bar{\m}(1),1),
\end{eqnarray}
and
\begin{eqnarray}\label{eq:thm7eq1}
\lim_{n\rightarrow\infty} \psi_S(\bar{\p}(1),\bar{\q}(1),\m,1) & \geq  &  \lim_{n\rightarrow\infty}  \inf_{\m} \Bigg(\Bigg.
-\frac{\mbox{sign}(s) s \beta^2}{2\sqrt{n}} \sum_{k=1}^{r+1}\Bigg(\Bigg. \bar{\p}_{k-1}(0)\bar{\q}_{k-1}(0)  -\bar{\p}_{k}(0)\bar{\q}_{k}(0)   \Bigg.\Bigg)
\m_{k}\omega(k;p)  \nonumber \\
& &   +   \psi_S(\bar{\p}(0),\bar{\q}(0),\m,0)\Bigg.\Bigg). \nonumber \\
 \end{eqnarray}
\end{corollary}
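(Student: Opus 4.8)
The plan is to read the \emph{modulo-$\m$} frame as Theorem~\ref{thm:thm5} with the fifth (the $\m$-stationarity) line of (\ref{eq:saip4}) dropped: one freezes an admissible $\m$ (keeping the conventions $\m_0=1$, $\m_{r+1}=0$ and the limiting normalization $\m_1\to\m_0=1$), solves only the first four lines of (\ref{eq:saip4}) for $\bar{\p}(t),\bar{\q}(t)$, and recovers (\ref{eq:thm7eq0}) and (\ref{eq:thm7eq1}) by reusing the machinery already in place. First I would re-establish $\frac{d\psi_1(\bar{\p}(t),\bar{\q}(t),\m,t)}{dt}\to0$, which integrates to the equal-limit chain (\ref{eq:thm7eq0}); then I would invoke the ($\m$-agnostic) algebra of Corollary~\ref{thm:thm6} at the frozen $\m$ to obtain an \emph{equality}, and pass to the infimum to get the inequality (\ref{eq:thm7eq1}).

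For the derivative I would expand the total $t$-derivative exactly as in (\ref{eq:saip7}),
\begin{eqnarray*}
\frac{d\psi_1}{dt}
&=& \frac{\partial\psi_1}{\partial t}
+\frac{\partial\psi_1}{\partial\bar{\p}}\frac{d\bar{\p}}{dt}
+\frac{\partial\psi_1}{\partial\bar{\q}}\frac{d\bar{\q}}{dt}
+\frac{\partial\psi_1}{\partial\m}\frac{d\m}{dt}.
\end{eqnarray*}
The $\bar{\p}$- and $\bar{\q}$-terms vanish by the first four lines of (\ref{eq:saip4}) (this is exactly what (\ref{eq:saip2})--(\ref{eq:saip3aa0}) compute). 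The one structural change from Theorem~\ref{thm:thm5} is the last term: here $\m$ is held fixed, so $\frac{d\m}{dt}=0$ annihilates $\frac{\partial\psi_1}{\partial\m}\frac{d\m}{dt}$ for free, taking over the role played by the $\m$-stationarity in the complete frame. It then remains to show $\frac{\partial\psi_1}{\partial t}\to0$. Under the inherited constant-magnitude (indeed unit-norm) setup the correction term added to $\psi$ in (\ref{eq:saip1}) is $t$-independent, because with all norms equal to one $\langle\|\x^{(i_1)}\|_2\|\x^{(p_1)}\|_2\|\y^{(i_2)}\|_2\|\y^{(p_2)}\|_2\rangle$ reduces to a measure-independent constant, so the $t$-dependence enters (\ref{eq:saip1}) only through $\psi$; hence $\frac{\partial\psi_1}{\partial t}=\frac{\partial\psi}{\partial t}$, which by Theorem~\ref{thm:thm4} with $\p_0=\q_0=1$ equals $\frac{\mbox{sign}(s)\beta^2}{2\sqrt n}\big(\sum_{k_1=1}^{r+1}\phi_{k_1}^{(r)}+\phi_{22}^{(r)}\big)$. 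The concentration hypothesis ($\phi_{k_1+1}^{(r)}=0$) annihilates the $k_1>1$ summands, the combined $\gamma_{21}^{(r)}/\gamma_{22}^{(r)}$ quasi-measure kills $\phi_2^{(r)}+\phi_{22}^{(r)}$, and $\m_1\to\m_0=1$ kills $\phi_1^{(r)}$, exactly as in the proof of Theorem~\ref{thm:thm5}; the derivative therefore vanishes and (\ref{eq:thm7eq0}) follows.

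For the inequality I would note that the identities (\ref{eq:thm6eq6})--(\ref{eq:thm6eq7}) used in Corollary~\ref{thm:thm6} are pure ($\m$-independent) manipulations --- the $t=1$ reduction $\psi_1=\psi_S$ and the $t=0$ split of $\psi_1$ into $\psi_S$ plus the $\m$-weighted correction. Feeding our own (\ref{eq:thm7eq0}) into these, the frozen $\m$ yields the equality
\begin{eqnarray*}
\lim_{n\to\infty}\psi_S(\bar{\p}(1),\bar{\q}(1),\m,1)
&=& \lim_{n\to\infty}\Bigg( -\frac{\mbox{sign}(s)s\beta^2}{2\sqrt n}\sum_{k=1}^{r+1}\big(\bar{\p}_{k-1}(0)\bar{\q}_{k-1}(0)-\bar{\p}_{k}(0)\bar{\q}_{k}(0)\big)\m_k\,\omega(k;p) \\
&& +\,\psi_S(\bar{\p}(0),\bar{\q}(0),\m,0)\Bigg).
\end{eqnarray*}
Writing the bracketed right-hand side as $F(\m)$, this reads $\lim_n\psi_S(\bar{\p}(1),\bar{\q}(1),\m,1)=\lim_nF(\m)$; since the frozen $\m$ is feasible for the infimum, $\lim_nF(\m)\geq\lim_n\inf_{\m}F(\m)$, which is precisely (\ref{eq:thm7eq1}).

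The step I expect to be the main obstacle is the vanishing of $\frac{\partial\psi}{\partial t}$ once the $\m$-stationarity has been relinquished --- one must be sure the concentration hypothesis alone is strong enough. The delicate piece is the $k_1=1$ contribution, where both $\gamma_{21}^{(r)}$ and $\gamma_{22}^{(r)}$ occur and the cancellation $\phi_2^{(r)}+\phi_{22}^{(r)}\to0$ has to be read through the quasi-measure $\frac{(\m_1-\m_2)p}{(\m_1-\m_2)p+\m_1(1-p)}\gamma_{21}^{(r)}+\frac{\m_1(1-p)}{(\m_1-\m_2)p+\m_1(1-p)}\gamma_{22}^{(r)}$ (cleanest when rotation invariance gives $\gamma_{21}^{(r)}=\gamma_{22}^{(r)}$, e.g.\ for $f_{\bar{\x}}(\x)\sim\bar{\x}^T\x$), exactly as flagged in the proof of Theorem~\ref{thm:thm5}. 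A secondary, routine point is the ordering of $\lim_{n\to\infty}$ and $\inf_\m$ in (\ref{eq:thm7eq1}): since the right-hand side is simply defined as that iterated operation and the final step only compares one value of $F$ with its infimum, no genuine limit/infimum interchange is needed.
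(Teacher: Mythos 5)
Your proposal is correct and follows essentially the same route as the paper, whose proof is simply the remark that the result follows by minor modifications of the proofs of Theorem \ref{thm:thm5} and Corollary \ref{thm:thm6}; you have spelled out exactly those modifications (the $\partial\psi_1/\partial\m$ term now dies because $d\m/dt=0$ rather than by $\m$-stationarity, the cancellation $\phi_{k_1}^{(r)}\rightarrow 0$ is re-read from the concentration hypothesis with the $\gamma_{21}^{(r)}/\gamma_{22}^{(r)}$ quasi-measure and $\m_1\rightarrow\m_0=1$, and the $\m$-independent identities (\ref{eq:thm6eq6})--(\ref{eq:thm6eq7}) give an equality at the frozen $\m$ that dominates the infimum). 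No gaps; this matches the intended argument.
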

\begin{proof}
   Follows immediately through simple modifications of Theorem \ref{thm:thm5} and Corollary \ref{thm:thm6} proofs.
\end{proof}

\section{Conclusion}
\label{sec:lev2x3lev2liftconc}

After \cite{Stojnicnflgscompyx23} introduced a powerful \emph{fully lifted} (fl)  blirp interpolating comparative  mechanism, \cite{Stojnicsflgscompyx23} followed up with its a particular stationarized  realization.  Companion paper \cite{Stojnicnflldp25} presents a large deviation fl upgrade thereby allowing to substantially extend the range of \cite{Stojnicnflgscompyx23}'s applicability. In particular, in addition to handling \emph{typical} random structures features (which \cite{Stojnicnflgscompyx23} can do), the machinery of \cite{Stojnicnflldp25} allows move to analytically much harder \emph{atypical} ones as  well. The same way \cite{Stojnicnflgscompyx23}  was followed by its a stationarization in \cite{Stojnicsflgscompyx23}, we here follow up on \cite{Stojnicnflldp25}  by introducing its own stationarized realization. A collection of very fundamental interpolating parameters relations is first uncovered and then shown to conveniently simplify to forms easily usable in practice.

Along the lines of the discussion in \cite{Stojnicnflldp25}, presented results directly apply to studying the so-called \emph{computational gaps} appearing in NP problems. Examples ranging from classical perceptrons to Hopfield models of prevalent interest in modern machine learning are just a few illustrative ones. The introduced concepts are very generic and their applications extend to various other random structures. All key generic principles needed for such applications are present here. To obtain concrete results for any particular application one usually needs to properly adapt the introduced methodology which typically requires a few additional technical adjustments. As these are problem specific, we present them for several most attractive application examples in separate companion papers.

\begin{singlespace}
\bibliographystyle{plain}
\bibliography{nflgscompyxRefs}
\end{singlespace}

\end{document}